\documentclass[10pt, a4paper]{amsart} 
\usepackage{color}
\usepackage{amscd,amssymb,graphics}
\usepackage{amsfonts}
\usepackage{amsmath}
\usepackage{tikz-cd}
\usepackage{multirow}
\usepackage{xparse}
\usepackage[createShortEnv, conf={no link to theorem}]{proof-at-the-end}
\usepackage{faktor}
\usepackage{makecell}
\input xy
\xyoption{all}
\usepackage{epsfig}

\newtheorem{thm}{Theorem}[section]

\newtheorem{f}[thm]{Fact}
\newtheorem{cor}[thm]{Corollary}

\newtheorem{lemma}[thm]{Lemma}
\newtheorem{prop}[thm]{Proposition}

\newtheorem{q}[thm]{Question}

\theoremstyle{definition}
\newtheorem{defin}[thm]{Definition}

\theoremstyle{remark}
\newtheorem{remark}[thm]{Remark}

\newtheorem{ex}[thm]{Example}

\numberwithin{equation}{section}

\newcommand{\delete}[1]{} 

\def\eps{{\varepsilon}}
\newcommand{\sk}{\vskip 0.1cm}

\newcommand{\ben}{\begin{enumerate}}
	
	\newcommand{\een}{\end{enumerate}}
\newcommand{\bit}{\begin{itemize}}
	
	\newcommand{\eit}{\end{itemize}}

\def\A {{\mathcal A}}
\def\R {{\mathbb R}}
\def\N {{\mathbb N}}
\def\Z {{\mathbb Z}}

\def\Or {{\mathcal O}}
\def\Qr {{\mathcal Q}}

\def\arI {{\,\boxdot\,}}
\def\arII {{\,\diamond\,}}

\newcommand{\RUC}{\mathrm{RUC}_{b}}

\newcommand{\ba}{\mathrm{ba}}

\def\Iso{{\mathrm{Iso}}\,}

\def\diam{{\mathrm{diam\ }}}

\def\A{{\mathcal{A}}}

\def\QED{\nobreak\quad\ifmmode\roman{Q.E.D.}\else{\rm Q.E.D.}\fi}

\def\spn{\operatorname{Span}}

\def\tame{\operatorname{\mathit{tame}}}
\def\wap{\operatorname{\mathit{wap}}}
\def\asp{\operatorname{\mathit{Asp}}}

\newcommand{\co}{{\rm{co\,}}}
\newcommand{\ext}{{\rm{ext\,}}}
\newcommand{\acx}{{\rm{acx\,}}}

\graphicspath{ {media/} }

\usepackage[hidelinks]{hyperref}
\begin{document}
	
	\numberwithin{equation}{thm}
	\title[]
	{Characterizations of Asplund and Tame Functionals using Arens Products} 
	
	\sk

	\author[]{Matan Komisarchik}
	\address{Department of Mathematics,
		Bar-Ilan University, 52900 Ramat-Gan, Israel}
	\email{matan.komisarchik@biu.ac.il}
	
	
	\date{February 2026}

	\begin{abstract}	
		We investigate the interaction between Arens products on the bidual of a Banach algebra and structural regularity properties of functionals on the algebra. Building on the classical characterization of weakly almost periodic functionals via Arens regularity, we prove new analogous criteria for Asplund and tame functionals.
		
		We establish a systematic correspondence between geometric properties of orbit sets in the dual---namely weak compactness, fragmentability, and absence of $\ell^1$-sequences---and structural properties of the corresponding bidual orbits under the Arens products, such as weak compactness, separability, and co-tameness. In particular, we obtain bidual characterizations of right Asplund and right tame functionals analogous to the classical weakly almost periodic theory.
		
		We then apply the theory to the group algebra $L^{1}(G)$ of a locally compact group $G$. In this setting, we derive concrete characterizations of Asplund and tame elements of $L^{\infty}(G)$ using orbits of finitely additive $\{0, 1\}$-valued measures.
		For characteristic functions over countable discrete groups, this yields a simple criterion based on countability of the orbit, generalizing a result of Glasner and Megrelishvili for $\ell^1(\mathbb{Z})$.
	\end{abstract}


	\subjclass[2020]{
		46H05, 
		43A60, 
		22D15, 
		37B05 
	}  
	
	\keywords{weakly almost periodic, Arens products, functionals on Banach algebras, Rosenthal space, tame families, Asplund space}

	\maketitle
	\setcounter{tocdepth}{1}
	\tableofcontents
	
	\section{Introduction}
	
	Let $\A$ be a Banach algebra, with dual $\A^{*}$ and bidual $\A^{**}$. The canonical $\A$-bimodule structure on $\A^{*}$ extends to an $\A^{**}$-action, which we denote by $\circledast$. This extension gives rise to two associative multiplications on $\A^{**}$, the \emph{first} and \emph{second Arens products} \cite{Arens}, denoted $\arI$ and $\arII$.
	
	A classical and influential line of research concerns \emph{weakly almost periodic} (WAP) functionals. These are the elements $\varphi \in \A^{*}$ whose orbits $B_{\A} \circledast \varphi$ are relatively weakly compact. Equivalently, they admit characterizations in terms of continuity properties of the orbit maps, their algebraic relationship with the two Arens products, and the orbit structure induced by the bidual; see Proposition~\ref{prop:wap_equivalence} and \cite{Ulger,palmer,pymConvFunc}.
	
	Following a suggestion of M.~Megrelishvili, and taking inspiration from J.~Pym \cite{Pym1993}, the present paper extends this Arens-theoretic viewpoint beyond the WAP setting, focusing instead on the broader and increasingly important classes of \emph{Asplund} and \emph{tame} functionals \cite{TameFunc}. These classes arise naturally in Banach space theory and topological dynamics, where they correspond to low complexity systems. Specifically, they are related to fragmentability and absence of $\ell^{1}$-sequences \cite{GM-rose, GM-MTame}. 
	We believe that their usefulness will carry over to the realm of Banach algebras as well.
	
	A central goal of this paper is to provide a unified ``dictionary'' translating orbit regularity between $\A$ and its bidual: weak compactness (WAP), separability/fragmentability (Asplund), and co-tameness (tameness) emerge as three facets of the same Arens-theoretic picture. The full parallelism is laid out in the table of Theorem~\ref{thm:summary}. In particular, we relate:
	\begin{itemize}
		\item geometric properties of the right orbit $\varphi \circledast B_{\A}$ (weak compactness / Asplundness / tameness),
		\item properties of the left orbit map $\Or_{\varphi}^{(l)}\colon \A^{**} \to \A^{*}$ (continuity / fragmentability),
		\item properties of families of Arens multiplication operators on $\A^{**}$, and
		\item weak compactness/separability, and co-tameness of the bidual orbit $B_{\A^{**}} \circledast \varphi$.
	\end{itemize}
	
	We present two applications. First, in Section~\ref{section:directions_and_examples} we illustrate the abstract criteria on algebras of bounded operators, and then on operator algebras arising from group representations. In both settings we obtain concrete sufficient conditions (Propositions~\ref{prop:sufficient_conditions_for_matrix_coefficients} and~\ref{prop:sufficient_conditions_for_group_matrix_coefficients}), and we view these results as a starting point for further work.
	
	As our main application, we specialize to the group algebra $\A=L^{1}(G)$ of a locally compact group $G$. 
	Identifying $\A^{*}$ with $L^{\infty}(G)$, we obtain concrete descriptions of right Asplund and right tame elements of $L^{\infty}(G)$ in terms of orbits under finitely additive $\{0,1\}$-valued measures (Theorems~\ref{thm:asplund_subsets_on_locally_compact_groups} and~\ref{thm:right_tame_left_co_tame_locally_compact}). 
	Applied to countable, discrete groups, this can be seen as a generalization of a result of Glasner and Megrelishvili \cite[Thm.~4.11]{GM-MTame} about the dynamics of characteristic functions in $\Z$.
	
	The $L^1(G)$ case is especially significant in light of our recent work \cite{meTameFunc}, where it was shown, answering a question posed in \cite{TameFunc}, that a right uniformly continuous function on $G$ is Asplund (respectively tame) as a function if and only if it is Asplund (respectively tame) as a functional on $L^{1}(G)$. 
	Together, these results show that dynamical regularity properties of functions on $G$ are reflected in the Arens-product orbit structure of $L^{1}(G)^{**}$, and in particular in the action of finitely additive $\{0,1\}$-valued measures.
	
	The paper is organized as follows. Section~\ref{section:preliminaries} collects background on Arens products, orbit maps, and fragmentation. We then review classical characterizations of weakly almost periodic functionals and develop the Asplund and tame analogues. The final part of the paper is devoted to applications, with particular emphasis on the group algebra $L^{1}(G)$.
	
	\section{Preliminaries} \label{section:preliminaries}
	All vector spaces in this paper are considered over the field $\R$ of real numbers.
	Also, we assume that all topological spaces are Tychonoff.
	If $X$ is a topological space, let $C(X)$ and $C_{b}(X)$ be the spaces of continuous, and bounded continuous functions over $X$, respectively. 
	A subset $A$ of a Banach space $V$ is weakly relatively compact if its weak closure is weakly compact.
	Write $B_{V}$ for the unit ball of $V$.
	
	Suppose that $V$ is a Banach space. 
	We will write $J\colon V \to V^{**}$ for the natural evaluation embedding and generally also treat $V$ as a subspace of $V^{**}$.
	We will use bracket notation $\langle \cdot, \cdot \rangle$ for duality of Banach spaces. 
	We will write $(V^{*}, w^{*})$ to refer to the weak-star topology of $V^{*}$.
	If $x \in V$, we will write $\rho_{x}$ for the seminorm on $V^{*}$ defined via evaluations at $x$.
	Let $(M, d)$ be a metric space. 
	If $A \subseteq M$ then:
	$$
	  \diam_{d}(A) := \sup\limits_{a, b \in A} d(a, b).
	$$
	
	\begin{f}[Goldstine's Theorem]\cite[p.~IV.17, Prop.~5]{bourbTVS}\label{fact:goldstines}
		If $V$ is a Banach space, then $J(B_{V})$ is weak-star dense in the bidual ball $B_{V^{**}}$.
	\end{f}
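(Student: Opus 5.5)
The natural route is Hahn--Banach separation in the locally convex space $(V^{**}, w^{*})$, where $V^{**}$ carries its weak-star topology $\sigma(V^{**},V^{*})$. Write $K$ for the weak-star closure of $J(B_{V})$. Two facts make the argument work. First, $J(B_{V})$ is convex and balanced, so $K$ is a weak-star closed, convex, balanced set. Second, the bidual ball $B_{V^{**}}$ is weak-star compact by Banach--Alaoglu, hence weak-star closed, and $J(B_{V}) \subseteq B_{V^{**}}$ because $J$ is an isometry. Together these give $K \subseteq B_{V^{**}}$, and it remains to prove the reverse inclusion.

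Suppose, for contradiction, that some $\xi \in B_{V^{**}}$ lies outside $K$. I will apply the Hahn--Banach separation theorem in $(V^{**}, w^{*})$ to separate the point $\xi$ from the closed convex set $K$. The key input is that the continuous linear functionals on $(V^{**}, w^{*})$ are exactly the evaluations at elements of $V^{*}$. Granting this, there are $f \in V^{*}$ and $c \in \R$ with
$$
\langle \xi, f\rangle > c \geq \sup_{\eta \in K} \langle \eta, f\rangle \geq \sup_{x \in B_{V}} \langle f, x\rangle = \|f\|.
$$
The last equality holds because $B_{V}$ is balanced, so the supremum of $f$ over it is the supremum of $|f|$, which is $\|f\|$ by the definition of the dual norm. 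On the other hand, $\|\xi\| \leq 1$ gives $\langle \xi, f\rangle \leq \|\xi\|\,\|f\| \leq \|f\|$. This contradicts the strict inequality, so $B_{V^{**}} \subseteq K$ and the two sets coincide.

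The only step needing real justification is that the weak-star dual of $V^{**}$ is $V^{*}$. This is the standard lemma that a linear functional continuous for $\sigma(V^{**},V^{*})$ is dominated by $\max_{i} |\langle \cdot, f_{i}\rangle|$ for finitely many $f_{i}$. It therefore vanishes on the intersection of the kernels of the $f_{i}$, and hence is a linear combination of them. I would quote this lemma rather than reprove it. Everything else is routine: convexity and the balanced property of $B_{V}$ under $J$, and Alaoglu compactness. Since this fact is classical (and cited from Bourbaki), the proof can be kept to this short separation argument.
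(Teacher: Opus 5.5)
Your proof is correct. You separate $\xi$ from the weak-star closed convex set $K$ by evaluation at some $f \in V^{*}$, then compare $\langle \xi, f\rangle \le \lVert f\rVert$ with $\sup_{x \in B_{V}} \langle f, x\rangle = \lVert f\rVert$; this is the standard argument. The paper gives no proof of its own and only cites Bourbaki, where the result is usually obtained from the bipolar theorem, which is this same Hahn--Banach separation in packaged form.
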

	
	\begin{f}[Mazur's Theorem] \cite[p.~65, Cor.~2]{Schaefer} \label{fact:mazur}   
		Let $C$ be a convex subset of a locally convex space $E$.
		Then the closure of $C$ is identical with its weak closure. Hence, $\overline{co (S)}=\overline{co (S)}^w$ 
		for every $S \subset E$.   
	\end{f}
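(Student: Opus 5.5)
The statement is Mazur's Theorem: for a convex subset $C$ of a locally convex space $E$, the original closure $\overline{C}$ coincides with the weak closure $\overline{C}^{w}$. I will prove the two inclusions separately and then read off the consequence for $\overline{\co(S)}$.

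\emph{The easy inclusion.} The weak topology $\sigma(E,E^{*})$ is coarser than the original topology of $E$. Weakly closed sets are therefore closed, so every closed set containing $C$ is contained in the corresponding weakly closed hull, giving $\overline{C} \subseteq \overline{C}^{w}$. This step uses neither convexity nor local convexity.

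\emph{The reverse inclusion.} I will argue by contradiction, and this is the step that carries all the content.
\begin{enumerate}
\item Suppose $x_0 \in \overline{C}^{w} \setminus \overline{C}$.
\item The closure $\overline{C}$ of a convex set is again convex, since the vector operations are continuous.
\item In a locally convex space there is then a convex open neighbourhood $U$ of $0$ with $(x_0+U)\cap \overline{C}=\emptyset$.
\item Apply the Hahn--Banach separation theorem (geometric form, via the Minkowski functional of $U$) to the disjoint convex sets $x_0+U$ (open) and $\overline{C}$. This yields $f\in E^{*}$ and a real $\alpha$ with
\[
f(c)\le \alpha < f(x_0) \quad \text{for all } c\in \overline{C}.
\]
\item Local convexity is essential in item 4: it is exactly what provides enough continuous functionals for the separation. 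Items 3 and 4 are the main obstacle, but since the separation theorem is standard, I will quote it rather than reprove it.
\item The set $\{y : f(y)>\alpha\}$ is weakly open, contains $x_0$, and misses $C$. This contradicts $x_0\in\overline{C}^{w}$.
\end{enumerate}
Hence $\overline{C}^{w}\subseteq\overline{C}$.

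\emph{The consequence.} For an arbitrary $S\subseteq E$, the convex hull $\co(S)$ is convex. Applying the equality just proved to $C=\co(S)$ gives $\overline{\co(S)}=\overline{\co(S)}^{w}$.
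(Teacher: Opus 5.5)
Your argument is correct, and it is the same argument the paper relies on: the paper gives no proof of its own and only cites Schaefer, where the result comes from the same Hahn--Banach separation of a closed convex set from an exterior point that you use. The one slip is the wording of the easy inclusion: it should just say that $\overline{C}^{w}$ is weakly closed, hence closed, and contains $C$, so $\overline{C}\subseteq\overline{C}^{w}$.
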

	
	\begin{f}\cite[Thm.~8.10.5, 8.10.13]{Narici2010} \label{fact:adjoint_is_weak_and_weak_star_cont}
		Let $T\colon V \to W$ be a bounded linear map between Banach spaces.
		Then $T$ is (weak, weak) continuous and $T^{*}$ is both (weak-star, weak-star) and (weak, weak) continuous.
	\end{f}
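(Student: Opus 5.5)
The statement to prove is Fact~\ref{fact:adjoint_is_weak_and_weak_star_cont}: for a bounded linear map $T\colon V \to W$ between Banach spaces, $T$ is (weak, weak) continuous, and $T^{*}$ is both (weak-star, weak-star) and (weak, weak) continuous. The plan is to check each claim on subbasic neighbourhoods. This reduces everything to the identity $\langle Tv, g\rangle = \langle v, T^{*}g\rangle$, together with the fact that the adjoint of a bounded map is bounded.

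For the first claim, recall that the weak topology on $W$ is the initial topology induced by the functionals $g \in W^{*}$. A map into $W$ is therefore weakly continuous precisely when its composition with every $g \in W^{*}$ is continuous. Fix $g \in W^{*}$. Then $g \circ T = T^{*}g$ is a bounded linear functional on $V$, hence it lies in $V^{*}$. By the definition of the weak topology on $V$, the map $v \mapsto \langle T^{*}g, v\rangle$ is weakly continuous. This proves that $T$ is (weak, weak) continuous.

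For (weak-star, weak-star) continuity of $T^{*}\colon W^{*} \to V^{*}$, we use that the weak-star topology on $V^{*}$ is the initial topology induced by the evaluations $\rho_{x}$, $x \in V$. It therefore suffices to show that $g \mapsto \langle T^{*}g, x\rangle$ is weak-star continuous on $W^{*}$. This map equals $g \mapsto \langle g, Tx\rangle$, which is evaluation at the point $Tx \in W$, and so it is weak-star continuous by definition.

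For (weak, weak) continuity of $T^{*}$, we apply the first claim to the bounded linear map $T^{*}\colon W^{*} \to V^{*}$ between Banach spaces. Its boundedness follows from $\|T^{*}\| = \|T\|$ (in fact $\le$ suffices), which is immediate from $|\langle T^{*}g, v\rangle| \le \|g\|\,\|T\|\,\|v\|$. Concretely, for $\Phi \in V^{**}$ the composition $\Phi \circ T^{*} = T^{**}\Phi$ lies in $W^{**}$, so it is weakly continuous on $W^{*}$.

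There is no real obstacle here. The only point requiring care is to keep the two topologies on $W^{*}$ distinct. Weak-star continuity uses only the evaluations coming from $W$, while weak continuity uses all of $W^{**}$, and the latter requires passing to the bidual adjoint $T^{**}$.
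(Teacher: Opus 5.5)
Your proposal is correct. The paper gives no proof of this Fact and only cites \cite{Narici2010}, and your argument is the standard proof behind that citation: you check continuity on the defining functionals via $\langle Tv, g\rangle = \langle v, T^{*}g\rangle$, and you get the (weak, weak) continuity of $T^{*}$ by applying the first claim to $T^{*}$, i.e.\ via $T^{**}$. One cosmetic slip: in the paper $\rho_{x}$ denotes the seminorm $\lvert\langle\cdot,x\rangle\rvert$, not an evaluation, so the weak-star topology is the locally convex topology generated by these seminorms, equivalently the initial topology of the linear maps $\langle\cdot,x\rangle$, which is what your argument actually uses.
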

	%
	
	\subsection{Banach Algebras and Arens Products}
	Our main focus in this text will be Banach algebras and Banach modules.
	For their definitions and detailed study, the reader is referred to \cite[Def~1.1.7 and~4.1.17]{palmer}.
	Throughout this text, let $\A$ be a Banach algebra.
	We will use implicit product notation for elements of the algebra.
	As a convention, we will use lowercase letters like $a, b, c$ to represent elements of $\A$ itself, and Greek letters like $\varphi$ and $\psi$ to represent functionals of $\A^{*}$.
	Finally, elements of the bidual $\A^{**}$ will be denoted by Greek letters like $\mu, \nu, \kappa$.
	
	We will now go over the definitions of the first and second Arens products and some of their properties.
	As in \cite{Arens}, the dual $\A^{*}$ can be furnished with a normed $\A$-bimodule structure via:
	\begin{align*}
		\forall a, b \in \A, \varphi \in \A^{*}: \langle a \circledast \varphi, b\rangle & := \langle \varphi, ba \rangle\\
		\forall a, b \in \A, \varphi \in \A^{*}: \langle \varphi \circledast a, b\rangle & := \langle \varphi, ab\rangle.
	\end{align*}
	
	\begin{f} \cite[Thm.~1.4.2]{palmer} \label{lemma:algebra_acts_on_dual}
		For every $\varphi \in \A^{*}, a, b \in \A$ we have:
		\begin{align*}
			(a b) \circledast \varphi & = a \circledast (b \circledast \varphi),\\
			(\varphi \circledast a) \circledast b & = \varphi \circledast (ab),\\
			(a \circledast \varphi) \circledast b & = a \circledast (\varphi \circledast b).
		\end{align*}
	\end{f}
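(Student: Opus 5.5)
The three identities follow by pairing both sides with an arbitrary $c \in \A$ and unwinding the definitions of the left and right actions
\[
\langle a \circledast \varphi, b\rangle = \langle \varphi, ba\rangle, \qquad \langle \varphi \circledast a, b\rangle = \langle \varphi, ab\rangle .
\]
Associativity of multiplication in $\A$ then finishes each case. Two functionals in $\A^{*}$ are equal exactly when they agree at every $c \in \A$, so this is all that is needed.

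For the first identity, fix $c \in \A$. On one side, $\langle (ab)\circledast\varphi, c\rangle = \langle \varphi, c(ab)\rangle$. On the other, applying the definition twice gives
\[
\langle a\circledast(b\circledast\varphi), c\rangle = \langle b\circledast\varphi, ca\rangle = \langle \varphi, (ca)b\rangle .
\]
Since $c(ab) = (ca)b$, the two sides agree.

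For the second identity, $\langle (\varphi\circledast a)\circledast b, c\rangle = \langle \varphi\circledast a, bc\rangle = \langle \varphi, a(bc)\rangle$. Also $\langle \varphi\circledast(ab), c\rangle = \langle \varphi, (ab)c\rangle$, and again associativity gives equality.

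For the third identity, $\langle (a\circledast\varphi)\circledast b, c\rangle = \langle a\circledast\varphi, bc\rangle = \langle \varphi, (bc)a\rangle$. Also
\[
\langle a\circledast(\varphi\circledast b), c\rangle = \langle \varphi\circledast b, ca\rangle = \langle \varphi, b(ca)\rangle ,
\]
and $(bc)a = b(ca)$.

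No step is a real obstacle. The only care needed is to apply the two definitions in the correct order: for the left action $a\circledast$, the element $a$ is multiplied onto the right of the test element, and for the right action $\circledast a$, the element $a$ is multiplied onto the left. Boundedness of the actions, with $\|a\circledast\varphi\| \le \|a\|\,\|\varphi\|$, is not needed for the identities themselves.
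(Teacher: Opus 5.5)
Your proof is correct: pairing with an arbitrary $c \in \A$ reduces each identity to associativity in $\A$, and you apply the left and right actions on the correct sides. The paper gives no proof of this fact and simply cites Palmer, Theorem~1.4.2; your argument is the standard direct verification of it.
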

	
	\begin{f}\cite[Thm.~1.4.2]{palmer} \label{lemma:algebra_action_on_dual_is_norm_continuous}
		For every $a \in \A, \varphi \in \A^{*}$ we have:
		$$
		\lVert a \circledast \varphi \rVert, \lVert \varphi \circledast a \rVert \leq \lVert a \rVert \lVert \varphi \rVert.
		$$
		In particular, the left and right actions of $\A$ on $\A^{*}$ are norm-continuous.
	\end{f}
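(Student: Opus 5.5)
The plan is to prove the estimate for the left action first and then the one for the right action; these are symmetric. For the left action, I will express the norm of $a \circledast \varphi$ as a supremum over the unit ball of $\A$ and bound each pairing. By definition,
$$
\lVert a \circledast \varphi \rVert = \sup_{b \in B_{\A}} \lvert \langle a \circledast \varphi, b \rangle \rvert = \sup_{b \in B_{\A}} \lvert \langle \varphi, ba \rangle \rvert .
$$
For each $b \in B_{\A}$ we have $\lvert \langle \varphi, ba \rangle \rvert \leq \lVert \varphi \rVert \lVert ba \rVert$. Submultiplicativity of the Banach algebra norm then gives $\lVert ba \rVert \leq \lVert b \rVert \lVert a \rVert \leq \lVert a \rVert$. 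Taking the supremum yields $\lVert a \circledast \varphi \rVert \leq \lVert a \rVert \lVert \varphi \rVert$.

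The right action is handled the same way, using $\langle \varphi \circledast a, b\rangle = \langle \varphi, ab\rangle$ and $\lVert ab \rVert \leq \lVert a \rVert \lVert b \rVert$. The computation also shows that $a \circledast \varphi$ and $\varphi \circledast a$ are bounded linear functionals, so both actions are well defined on $\A^{*}$. Linearity in $b$ is clear, since multiplication in $\A$ is bilinear.

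For the ``in particular'' clause, I note that both actions are bilinear in $(a,\varphi)$, again by bilinearity of the algebra product and of the duality pairing. A bilinear map satisfying a bound of the form $\lVert a \circledast \varphi \rVert \leq \lVert a \rVert \lVert \varphi \rVert$ is jointly norm-continuous. Concretely,
$$
a \circledast \varphi - a_0 \circledast \varphi_0 = (a - a_0) \circledast \varphi + a_0 \circledast (\varphi - \varphi_0),
$$
and applying the estimate to each term gives
$$
\lVert a \circledast \varphi - a_0 \circledast \varphi_0 \rVert \leq \lVert a - a_0 \rVert \lVert \varphi \rVert + \lVert a_0 \rVert \lVert \varphi - \varphi_0 \rVert .
$$
The right-hand side tends to zero as $(a,\varphi) \to (a_0,\varphi_0)$ in norm, since $\lVert \varphi \rVert$ stays bounded. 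Separate continuity in each variable follows directly from the estimate as well.

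None of these steps is a genuine obstacle. The only point requiring care is to use the correct order of the product in each pairing ($ba$ for the left action, $ab$ for the right). Submultiplicativity holds in either order, so this does not affect the bound.
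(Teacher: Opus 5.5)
Your proof is correct: taking the supremum over $B_{\A}$ and applying submultiplicativity (with $ba$ for the left action and $ab$ for the right) is the standard computation behind the cited result in Palmer, which the paper quotes without proving. Your bilinear splitting for the ``in particular'' clause is also valid, and gives joint norm-continuity, which is slightly more than the statement claims.
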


	The action of $\A$ on $\A^{*}$ can be extended to $\A^{**}$ in the following way.
	\begin{defin} \cite[Def.~1.4.1]{palmer}
		For every $\mu \in \A^{**}, \varphi \in \A^{*}$ and $b \in \A$:
		\begin{align*}
			\langle \mu \circledast \varphi, b\rangle & := \langle \mu, \varphi \circledast b \rangle\\
			\langle \varphi \circledast \mu, b\rangle & := \langle \mu, b \circledast \varphi \rangle.
		\end{align*}
	\end{defin}
	
	\begin{f}\cite[Thm.~1.4.2]{palmer} \label{fact:bidual_action_properties}
		For every $a \in \A$, $\varphi \in \A^{*}$ and $\mu \in \A^{**}$:
		\begin{align*}
			a \circledast \varphi & = J(a) \circledast \varphi
			\\
			\varphi \circledast a & = \varphi \circledast J(a)
			\\
			\lVert \mu \circledast \varphi \rVert & \leq \lVert \mu \rVert \lVert \varphi \rVert\\
			\lVert \varphi \circledast \mu\rVert & \leq \lVert \mu \rVert \lVert \varphi \rVert\\
			(\mu \circledast \varphi) \circledast a & =
			\mu \circledast (\varphi \circledast a)\\
			(a \circledast \varphi) \circledast \mu & =
			a \circledast (\varphi \circledast \mu).
		\end{align*}
	\end{f}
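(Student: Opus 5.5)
The plan is to verify each identity in Fact~\ref{fact:bidual_action_properties} directly from the definitions, by testing both sides against an arbitrary $b \in \A$. Since elements of $\A^{*}$ are determined by their values on $\A$, this suffices throughout.

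\emph{The first two identities.} For $a,b\in\A$ we have
\[
\langle J(a)\circledast\varphi, b\rangle = \langle J(a), \varphi\circledast b\rangle = \langle \varphi\circledast b, a\rangle = \langle\varphi, ba\rangle = \langle a\circledast\varphi, b\rangle .
\]
Symmetrically,
\[
\langle \varphi\circledast J(a), b\rangle = \langle J(a), b\circledast\varphi\rangle = \langle b\circledast\varphi, a\rangle = \langle \varphi, ab\rangle = \langle\varphi\circledast a, b\rangle .
\]

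\emph{The norm bounds.} For $\lVert b\rVert\le 1$ we get
\[
|\langle\mu\circledast\varphi,b\rangle| \le \lVert\mu\rVert\,\lVert\varphi\circledast b\rVert \le \lVert\mu\rVert\,\lVert\varphi\rVert\,\lVert b\rVert,
\]
using Fact~\ref{lemma:algebra_action_on_dual_is_norm_continuous}. Taking the supremum over $b$ gives the bound, and the bound for $\varphi\circledast\mu$ is identical.

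\emph{The two module identities.} First,
\[
\langle(\mu\circledast\varphi)\circledast a, b\rangle = \langle \mu\circledast\varphi, ab\rangle = \langle\mu, \varphi\circledast(ab)\rangle = \langle \mu, (\varphi\circledast a)\circledast b\rangle = \langle\mu\circledast(\varphi\circledast a), b\rangle,
\]
where the third equality is the second identity of Fact~\ref{lemma:algebra_acts_on_dual}. Similarly,
\[
\langle (a\circledast\varphi)\circledast\mu, b\rangle = \langle\mu, b\circledast(a\circledast\varphi)\rangle = \langle \mu, (ba)\circledast\varphi\rangle,
\]
by the first identity of Fact~\ref{lemma:algebra_acts_on_dual}. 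On the other side,
\[
\langle a\circledast(\varphi\circledast\mu), b\rangle = \langle\varphi\circledast\mu, ba\rangle = \langle\mu, (ba)\circledast\varphi\rangle ,
\]
so the two sides agree.

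There is no real obstacle here. The only step needing care is keeping the order of products straight, namely $ba$ versus $ab$ in the definitions of the two actions, and applying the correct associativity rule from Fact~\ref{lemma:algebra_acts_on_dual} in each of the last two computations.
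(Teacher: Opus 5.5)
Your proof is correct. Each of the six identities follows by testing against an arbitrary $b\in\A$ and unwinding the definitions, with the orders $ab$ versus $ba$ and the two associativity rules of Fact~\ref{lemma:algebra_acts_on_dual} applied correctly. The paper gives no proof of its own here, only the citation \cite[Thm.~1.4.2]{palmer}, and your direct verification is the standard argument behind that reference.
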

	\begin{defin}\cite[Def.~1.4.1]{palmer}
		We can define the two Arens products $\arI, \arII \colon \A^{**} \times \A^{**} \to \A^{**}$ for every $\mu, \nu \in \A^{**}$ and $\varphi \in \A^{*}$:
		\begin{align*}
			\langle \mu \arI \nu, \varphi \rangle & := 
			\langle \mu, \nu \circledast \varphi \rangle,\\
			\langle \mu \arII\nu, \varphi \rangle & :=
			\langle \nu, \varphi \circledast \mu\rangle.
		\end{align*}
	\end{defin}
	
	\begin{f}\cite[Thm.~1.4.2]{palmer} \label{lemma:arens_properties}
		For every $\mu, \nu, \kappa \in \A^{**}, \varphi, \psi \in \A^{*}, a, b \in \A$ we have:
		\begin{align*}
			(\mu \arI \nu) \circledast \varphi & = \mu \circledast (\nu \circledast \varphi)
			\\
			\varphi \circledast (\mu \arII \nu) & = (\varphi \circledast \mu) \circledast \nu
			\\
			(\mu \arI \nu) \arI \kappa & = \mu \arI (\nu \arI \kappa)\\
			(\mu \arII \nu) \arII \kappa & = \mu \arII (\nu \arII \kappa)\\
			J(a) \arI \mu & = J(a) \arII \mu\\
			\mu \arI J(a) & = \mu \arII J(a)\\
			J(a) \arI J(b) = J(a) & \arII J(b) = J(ab)\\
			\lVert \mu \arI \nu \rVert & \leq \lVert \mu \rVert \lVert \nu \rVert\\
			\lVert \mu \arII \nu \rVert & \leq \lVert \mu \rVert \lVert \nu \rVert.
		\end{align*}
	\end{f}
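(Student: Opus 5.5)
These identities are all verified by unwinding the defining dualities and testing against elements of $\A$ or $\A^{*}$; no topology is needed. I will establish them in the listed order, since later ones use earlier ones.

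\emph{The two module identities.} For the first, fix $b\in\A$ and compute
$$\langle (\mu\arI\nu)\circledast\varphi,b\rangle=\langle \mu\arI\nu,\varphi\circledast b\rangle=\langle \mu,\nu\circledast(\varphi\circledast b)\rangle.$$
By Fact~\ref{fact:bidual_action_properties}, $\nu\circledast(\varphi\circledast b)=(\nu\circledast\varphi)\circledast b$. Hence the right-hand side equals $\langle \mu\circledast(\nu\circledast\varphi),b\rangle$, which is the claim. The second identity is the mirror computation:
$$\langle\varphi\circledast(\mu\arII\nu),b\rangle=\langle\mu\arII\nu,b\circledast\varphi\rangle=\langle\nu,(b\circledast\varphi)\circledast\mu\rangle.$$
By Fact~\ref{fact:bidual_action_properties} this equals $\langle \nu, b\circledast(\varphi\circledast\mu)\rangle=\langle (\varphi\circledast\mu)\circledast\nu,b\rangle$.

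\emph{Associativity.} Pair against $\varphi$ and use the first module identity twice:
$$\langle(\mu\arI\nu)\arI\kappa,\varphi\rangle=\langle\mu\arI\nu,\kappa\circledast\varphi\rangle=\langle\mu,\nu\circledast(\kappa\circledast\varphi)\rangle=\langle\mu,(\nu\arI\kappa)\circledast\varphi\rangle,$$
and the last expression is $\langle\mu\arI(\nu\arI\kappa),\varphi\rangle$. The $\arII$ case is symmetric and uses the second module identity.

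\emph{Identities with $J(a)$.} First I will show, directly from the definitions, that $\nu\circledast a=\nu\circledast J(a)$ and $a\circledast\nu=J(a)\circledast\nu$ for $\nu\in\A^{**}$. (Here $\nu\circledast a$ is understood in the bimodule structure of $\A^{**}$ dual to that of $\A^{*}$.) Then
$$\langle J(a)\arI\mu,\varphi\rangle=\langle\mu\circledast\varphi,a\rangle=\langle\mu,\varphi\circledast a\rangle,$$
$$\langle J(a)\arII\mu,\varphi\rangle=\langle\mu,\varphi\circledast J(a)\rangle=\langle\mu,\varphi\circledast a\rangle,$$
where the last step uses Fact~\ref{fact:bidual_action_properties}. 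So $J(a)\arI\mu=J(a)\arII\mu$. Similarly,
$$\langle\mu\arI J(a),\varphi\rangle=\langle\mu,J(a)\circledast\varphi\rangle=\langle\mu,a\circledast\varphi\rangle=\langle \mu\arII J(a),\varphi\rangle,$$
where the last equality holds because $\langle \mu\arII J(a),\varphi\rangle=\langle J(a),\varphi\circledast\mu\rangle=\langle\varphi\circledast\mu,a\rangle=\langle\mu,a\circledast\varphi\rangle$ by the definition of $\varphi\circledast\mu$. Finally,
$$\langle J(a)\arI J(b),\varphi\rangle=\langle J(b)\circledast\varphi,a\rangle=\langle b\circledast\varphi,a\rangle=\langle\varphi,ab\rangle=\langle J(ab),\varphi\rangle.$$

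\emph{Norm bounds.} We have
$$|\langle\mu\arI\nu,\varphi\rangle|\le\|\mu\|\,\|\nu\circledast\varphi\|\le\|\mu\|\,\|\nu\|\,\|\varphi\|$$
by Fact~\ref{fact:bidual_action_properties}, and $\arII$ is handled in the same way.

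The main difficulty is bookkeeping rather than any single step: each identity must be tested against the correct type of element ($b\in\A$ for elements of $\A^{*}$, $\varphi\in\A^{*}$ for elements of $\A^{**}$), and the order of factors in the two actions must be kept straight. The only potential trap is assuming weak-star continuity of the Arens products in the wrong variable, and this is avoided because every step above is a purely algebraic duality computation.
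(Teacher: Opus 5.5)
Your verification is correct, and it is the standard argument. The paper gives no proof of its own; it cites Palmer's Thm.~1.4.2, where these identities are obtained by exactly this direct unwinding of the defining dualities, using the module identities of Fact~\ref{fact:bidual_action_properties}. The only blemish is your announced claim that $\nu\circledast a=\nu\circledast J(a)$ and $a\circledast\nu=J(a)\circledast\nu$ for $\nu\in\A^{**}$: you never prove it and never use it, since your computations only need $\varphi\circledast J(a)=\varphi\circledast a$ and $J(a)\circledast\varphi=a\circledast\varphi$ for $\varphi\in\A^{*}$. Moreover, $\nu\circledast J(a)$ has no meaning in the paper's notation, so that sentence should simply be deleted.
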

	\begin{remark}
		We will henceforth use implicit product notation for elements of $\A$ and $\A^{**}$, namely: $a\mu$ and $\mu a$.
	\end{remark}
	
	\subsection{Weak-Star Continuity}
	\begin{defin}
		For every $\varphi \in \A^{*}$ we define the \emph{left and right orbit maps $\Or_{\varphi}^{(l)}$ and $\Or_{\varphi}^{(r)}$} via:
		$$
		\Or_{\varphi}^{(l)}(\mu) := \mu \circledast \varphi \text{ and }
		\Or_{\varphi}^{(r)}(\mu) := \varphi \circledast \mu.
		$$
		We will write $\Qr_{\varphi}^{(l)}$ and $\Qr_{\varphi}^{(r)}$ for the appropriate restrictions to $\A$.
	\end{defin}
	The terminology of orbits is justified because it agrees with the orbits of the multiplicative semigroup of $\A^{**}$.
	
	\begin{f}\cite[p.~60]{palmer}\label{lemma:adjoint_representation}
		$\left(\Qr_{\varphi}^{(l)} \right)^{*} = \Or_{\varphi}^{(r)}$ and $\left(\Qr_{\varphi}^{(r)} \right)^{*} = \Or_{\varphi}^{(l)}$.
	\end{f}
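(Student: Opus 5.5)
Both identities come from unwinding the definitions and testing against arbitrary elements. Since $\Qr_{\varphi}^{(l)}\colon \A \to \A^{*}$ is a bounded linear map, its adjoint is a map $\A^{**} \to \A^{*}$. To identify it with $\Or_{\varphi}^{(r)}$, I will show that for each $\mu \in \A^{**}$ the two functionals $(\Qr_{\varphi}^{(l)})^{*}(\mu)$ and $\varphi \circledast \mu$ agree on every $b \in \A$. The second identity is handled the same way, with the roles of left and right swapped.

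First I check boundedness. By Fact~\ref{lemma:algebra_action_on_dual_is_norm_continuous}, $\lVert a \circledast \varphi \rVert \leq \lVert a \rVert \lVert \varphi \rVert$, so $\Qr_{\varphi}^{(l)}$ is linear and bounded and its adjoint is defined. The same bound holds for $\Qr_{\varphi}^{(r)}$.

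Next I compute the first identity. Fix $\mu \in \A^{**}$ and $b \in \A$. By the definition of the adjoint,
\begin{align*}
\langle (\Qr_{\varphi}^{(l)})^{*}\mu, b \rangle = \langle \mu, \Qr_{\varphi}^{(l)}(b) \rangle = \langle \mu, b \circledast \varphi \rangle .
\end{align*}
By the definition of the bidual action, $\langle \varphi \circledast \mu, b\rangle = \langle \mu, b\circledast\varphi\rangle$. The two expressions coincide, so $(\Qr_{\varphi}^{(l)})^{*} = \Or_{\varphi}^{(r)}$.

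The second identity is symmetric:
\begin{align*}
\langle (\Qr_{\varphi}^{(r)})^{*}\mu, b\rangle = \langle \mu, \varphi \circledast b\rangle = \langle \mu\circledast\varphi, b\rangle ,
\end{align*}
so $(\Qr_{\varphi}^{(r)})^{*} = \Or_{\varphi}^{(l)}$.

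There is no real obstacle here. The only care needed is bookkeeping: one must pair $\A^{**}$ with $\A^{*}$ and $\A^{*}$ with $\A$ correctly, and make sure the left/right conventions in the definition of the bidual action match the ones in the definition of the orbit maps. With the paper's conventions the identification is exact.
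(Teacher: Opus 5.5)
Your verification is correct. Testing both $(\Qr_{\varphi}^{(l)})^{*}\mu$ and $\varphi \circledast \mu$ against $b \in \A$ gives $\langle \mu, b \circledast \varphi\rangle$ directly from the definitions, and the second identity follows symmetrically. The paper gives no proof of its own here (it only cites Palmer), and this direct unwinding of the definitions is the standard argument.
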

	
	\begin{lemma} \label{lemma:bidual_action_is_orbit_weak_star_continuous}
		The orbit maps $\Or_{\varphi}^{(l)}$ and $\Or_{\varphi}^{(r)}$ of $\A^{**}$ on $\A^{*}$ are (weak-star, weak-star) continuous.
		Specifically, if $b \in \A, \varphi \in \A^{*}$ and $\{\mu_{\lambda} \}_{\lambda \in \Lambda}$ is a net in $\A^{**}$ whose limit is $\mu \in \A^{**}$ then:
		\begin{align*}
			\lim\limits_{\lambda \in \Lambda} \langle \mu_{\lambda} \circledast \varphi, b\rangle & = 
			\langle \mu \circledast \varphi, b \rangle\\
			\lim\limits_{\lambda \in \Lambda} \langle \varphi \circledast \mu_{\lambda}, b\rangle & = 
			\langle \varphi \circledast \mu, b \rangle.
		\end{align*}
	\end{lemma}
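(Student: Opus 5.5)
The plan is to reduce each claimed limit to the definition of the weak-star topology on $\A^{**}$, using the defining pairings of the bidual action. The topologies are read as follows: $\A^{**}$ carries the $\sigma(\A^{**},\A^{*})$ topology, and $\A^{*}$ carries $\sigma(\A^{*},\A)$. Since the weak-star topology on $\A^{*}$ is the initial topology for the evaluations $\psi \mapsto \langle \psi, b\rangle$ with $b \in \A$, a map into $(\A^{*}, w^{*})$ is continuous exactly when each composite with such an evaluation is continuous. So it suffices to prove the two displayed limit statements for an arbitrary fixed $b \in \A$.

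For the left orbit map, fix $\varphi \in \A^{*}$ and $b \in \A$, and let $\mu_\lambda \to \mu$ weak-star in $\A^{**}$. By the definition of the bidual action, $\langle \mu_\lambda \circledast \varphi, b\rangle = \langle \mu_\lambda, \varphi \circledast b\rangle$. Here $\varphi \circledast b$ is a fixed element of $\A^{*}$, so weak-star convergence of $\mu_\lambda$ gives convergence of this pairing to $\langle \mu, \varphi \circledast b\rangle = \langle \mu \circledast \varphi, b\rangle$.

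The right orbit map is handled identically. We have $\langle \varphi \circledast \mu_\lambda, b\rangle = \langle \mu_\lambda, b \circledast \varphi\rangle$, where $b \circledast \varphi \in \A^{*}$ is fixed, and this converges to $\langle \mu, b\circledast\varphi\rangle = \langle \varphi \circledast \mu, b\rangle$.

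Alternatively, one could invoke Fact~\ref{lemma:adjoint_representation}, which identifies $\Or_{\varphi}^{(l)}$ and $\Or_{\varphi}^{(r)}$ as adjoints of bounded operators. Fact~\ref{fact:adjoint_is_weak_and_weak_star_cont} then gives (weak-star, weak-star) continuity directly. There is no real obstacle; the only care needed is to check that the pairing is with a fixed element of $\A^{*}$, which is exactly what the definition of the bidual action guarantees.
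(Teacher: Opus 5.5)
Your proposal is correct and is essentially the paper's proof. The paper simply cites Fact~\ref{lemma:adjoint_representation}, which identifies $\Or_{\varphi}^{(l)}$ and $\Or_{\varphi}^{(r)}$ as the adjoints of $\Qr_{\varphi}^{(r)}$ and $\Qr_{\varphi}^{(l)}$, together with the (weak-star, weak-star) continuity of adjoints from Fact~\ref{fact:adjoint_is_weak_and_weak_star_cont}; that is exactly your stated alternative, and your direct computation $\langle \mu_{\lambda} \circledast \varphi, b\rangle = \langle \mu_{\lambda}, \varphi \circledast b\rangle \to \langle \mu \circledast \varphi, b\rangle$ (with its right-hand analogue) is that adjoint argument unpacked from the definitions.
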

	\begin{proof}
		A consequence of Fact \ref{lemma:adjoint_representation} and Fact \ref{fact:adjoint_is_weak_and_weak_star_cont}.
	\end{proof}

	\begin{cor} \label{cor:dual_ball_action_is_weak_star_closure}
		For every functional $\varphi \in \A^{*}$ we have:
		$$
		B_{\A^{**}} \circledast \varphi = \overline{B_{\A} \circledast \varphi}^{w^{*}}
		\text{ and }
		\varphi \circledast B_{\A^{**}} = \overline{ \varphi \circledast B_{\A}}^{w^{*}}.
		$$
	\end{cor}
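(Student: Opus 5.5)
We prove the first equality; the second is symmetric, with $\Or_{\varphi}^{(r)}$ in place of $\Or_{\varphi}^{(l)}$. The plan is to combine Goldstine's theorem (Fact~\ref{fact:goldstines}), the weak-star continuity of the orbit map (Lemma~\ref{lemma:bidual_action_is_orbit_weak_star_continuous}), and weak-star compactness of $B_{\A^{**}}$ (Banach--Alaoglu). Throughout, closures in $\A^{*}$ are taken for $\sigma(\A^{*},\A)$, and $B_{\A}\circledast\varphi$ is identified with $J(B_{\A})\circledast\varphi$ via Fact~\ref{fact:bidual_action_properties}.

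\emph{Inclusion $\subseteq$.} Let $\mu \in B_{\A^{**}}$. By Goldstine's theorem there is a net $(a_\lambda)$ in $B_{\A}$ with $J(a_\lambda)\to\mu$ weak-star in $\A^{**}$. Lemma~\ref{lemma:bidual_action_is_orbit_weak_star_continuous} then gives $a_\lambda\circledast\varphi = J(a_\lambda)\circledast\varphi \to \mu\circledast\varphi$ weak-star in $\A^{*}$. Hence $\mu\circledast\varphi\in\overline{B_{\A}\circledast\varphi}^{w^{*}}$.

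\emph{Inclusion $\supseteq$.} The set $B_{\A^{**}}$ is weak-star compact by Banach--Alaoglu. The map $\Or_{\varphi}^{(l)}$ is (weak-star, weak-star) continuous, so its image $B_{\A^{**}}\circledast\varphi$ is weak-star compact in $\A^{*}$. Since the weak-star topology is Hausdorff, this image is weak-star closed. It contains $J(B_{\A})\circledast\varphi = B_{\A}\circledast\varphi$, and therefore it contains the weak-star closure of that set.

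The only point needing care is the topology on $\A^{**}$ in the continuity statement. Lemma~\ref{lemma:bidual_action_is_orbit_weak_star_continuous} must be read with the $\sigma(\A^{**},\A^{*})$ topology on the domain. This is justified by Fact~\ref{lemma:adjoint_representation}, which exhibits $\Or_{\varphi}^{(l)}$ as the adjoint $(\Qr_{\varphi}^{(r)})^{*}$. That topology is exactly the one in which Goldstine density and Alaoglu compactness hold, so the two halves of the argument are consistent and no further obstacle arises.
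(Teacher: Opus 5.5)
Your proof is correct and essentially the paper's: Banach--Alaoglu compactness of $B_{\A^{**}}$ together with the (weak-star, weak-star) continuity of $\Or_{\varphi}^{(l)}$ gives $\supseteq$, and Goldstine density plus the same continuity gives $\subseteq$. Your explicit net formulation of $\subseteq$ and your check that the domain topology is $\sigma(\A^{**},\A^{*})$, via $\Or_{\varphi}^{(l)}=(\Qr_{\varphi}^{(r)})^{*}$, are correct clarifications of points the paper leaves implicit.
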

	\begin{proof}
		We only consider the case of the left action, the other case is similar.
		Recall that $B_{\A^{**}}$ is weak-star compact in virtue of the Banach-Alaoglu theorem.
		Since the action is (weak-star, weak-star) continuous, $B_{\A^{**}} \circledast \varphi$ is also weak-star compact.
		In particular, it is weak-star closed, so we have:
		$$
		B_{\A^{**}} \circledast \varphi = \overline{B_{\A^{**}} \circledast \varphi}^{w^{*}} \supseteq \overline{B_{\A} \circledast \varphi}^{w^{*}}.
		$$
		Conversely, in virtue of Goldstine's theorem (Fact \ref{fact:goldstines}), $\overline{B_{\A}}^{w^{*}} = B_{\A^{**}}$.
		Again, since the action is (weak-star, weak-star) continuous we have:
		$$
		B_{\A^{**}} \circledast \varphi = \Or_{\varphi}^{(l)} \left( \overline{B_{\A}}^{w^{*}}\right) \subseteq
		\overline{\Or_{\varphi}^{(l)}(B_{\A})}^{w^{*}} =
		\overline{B_{\A} \circledast \varphi}^{w^{*}}.
		$$
	\end{proof}
	
	\begin{lemma}
		The restricted orbit maps $\Qr_{\varphi}^{(l)}$ and $\Qr_{\varphi}^{(r)}$ are (weak, weak) continuous.
		Specifically, if $\mu \in \A^{**}, \varphi \in \A^{*}$ and $\{a_{\lambda}\}_{\lambda \in \Lambda} \subseteq \A$ converges weakly to $a \in \A$ then:
		\begin{align*}
			\lim\limits_{\lambda \in \Lambda} \langle \mu, a_{\lambda} \circledast \varphi\rangle & = 
			\langle \mu, a \circledast \varphi\rangle\\
			\lim\limits_{\lambda \in \Lambda} \langle \mu, \varphi \circledast a_{\lambda}\rangle & = 
			\langle \mu, \varphi \circledast a\rangle.
		\end{align*}
	\end{lemma}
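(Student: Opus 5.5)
The plan is to argue exactly as in Lemma~\ref{lemma:bidual_action_is_orbit_weak_star_continuous}: recognize the restricted orbit maps as bounded linear operators between Banach spaces and then invoke Fact~\ref{fact:adjoint_is_weak_and_weak_star_cont}. First I check that $\Qr_{\varphi}^{(l)}\colon \A \to \A^{*}$, $a \mapsto a \circledast \varphi$, is linear. This is immediate from the defining formula $\langle a \circledast \varphi, b\rangle = \langle \varphi, ba\rangle$, which is linear in $a$. Boundedness then follows from Fact~\ref{lemma:algebra_action_on_dual_is_norm_continuous}, which gives $\lVert a \circledast \varphi\rVert \leq \lVert a\rVert \lVert \varphi\rVert$, so $\lVert \Qr_{\varphi}^{(l)}\rVert \leq \lVert \varphi \rVert$. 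The same reasoning applies verbatim to $\Qr_{\varphi}^{(r)}$.

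Next I apply the first half of Fact~\ref{fact:adjoint_is_weak_and_weak_star_cont}: every bounded linear map between Banach spaces is (weak, weak) continuous. Hence $\Qr_{\varphi}^{(l)}$ and $\Qr_{\varphi}^{(r)}$ are (weak, weak) continuous from $\A$ to $\A^{*}$. If one prefers to avoid quoting this, the direct argument is short. For $\mu \in \A^{**}$, the functional $a \mapsto \langle \mu, a \circledast \varphi\rangle$ equals $\langle (\Qr_{\varphi}^{(l)})^{*}\mu, a\rangle$. Here $(\Qr_{\varphi}^{(l)})^{*}\mu \in \A^{*}$; indeed by Fact~\ref{lemma:adjoint_representation} it is $\varphi \circledast \mu = \Or_{\varphi}^{(r)}(\mu)$. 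It is therefore a weakly continuous functional on $\A$. Similarly, $a \mapsto \langle \mu, \varphi \circledast a\rangle$ is evaluation against $\Or_{\varphi}^{(l)}(\mu) = \mu \circledast \varphi \in \A^{*}$.

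Finally, I unwind this into the stated limits. If $a_\lambda \to a$ weakly in $\A$, then pairing with the fixed element $\varphi \circledast \mu \in \A^{*}$ gives
\[
\langle \mu, a_\lambda \circledast \varphi\rangle = \langle \varphi \circledast \mu, a_\lambda\rangle \to \langle \varphi \circledast \mu, a\rangle = \langle \mu, a \circledast \varphi\rangle .
\]
The second limit follows analogously using $\mu \circledast \varphi$. The weak topology of $\A^{*}$ is generated exactly by pairings with elements $\mu \in \A^{**}$, so these limits are precisely (weak, weak) continuity.

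The only point needing care is the identification $\langle \mu, a \circledast \varphi\rangle = \langle \varphi \circledast \mu, a\rangle$. It holds directly by the definition $\langle \varphi \circledast \mu, b\rangle := \langle \mu, b \circledast \varphi\rangle$, and this is just Fact~\ref{lemma:adjoint_representation}. Beyond that, I expect no real obstacle.
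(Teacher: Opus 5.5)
Your proof is correct and takes essentially the paper's approach: the paper also treats the restricted orbit maps as bounded linear operators and invokes Fact~\ref{fact:adjoint_is_weak_and_weak_star_cont}, citing Fact~\ref{fact:bidual_action_properties} where you cite the equivalent norm bound of Fact~\ref{lemma:algebra_action_on_dual_is_norm_continuous}. Your explicit unwinding via $\langle \mu, a \circledast \varphi\rangle = \langle \varphi \circledast \mu, a\rangle$ is also correct and serves as a self-contained check of the quoted fact.
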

	\begin{proof}
		A consequence of Fact \ref{fact:bidual_action_properties} and Fact \ref{fact:adjoint_is_weak_and_weak_star_cont}.
	\end{proof}

	\begin{defin}
		For every $\mu \in \A^{**}$ we define $\arI_{\mu}^{(l)}, \arI_{\mu}^{(r)}, \arII_{\mu}^{(l)}, \arII_{\mu}^{(r)}\colon \A^{**} \to \A^{**}$ via:
		\begin{align*}
			\arI_{\mu}^{(l)}(\nu) := \nu \arI \mu,&\ 
			\arI_{\mu}^{(r)}(\nu) := \mu \arI \nu,\\ 
			\arII_{\mu}^{(l)}(\nu) := \nu \arII \mu,&\ 
			\arII_{\mu}^{(r)}(\nu) := \mu \arII \nu. 
		\end{align*}
	\end{defin}
	
	\begin{f}\cite[Thm.~1.4.2]{palmer}\label{lemma:bidual_ball_is_left_right_topological_semigroup}~
		
		\ben
		\item $\arI$ turns $(\A^{**}, w^{*})$ into a left topological semigroup. 
		Namely, the map $\arI_{\mu}^{(l)}$ sending $\nu$ to $\nu \arI \mu$ is weak-star continuous for every $\mu \in \A^{**}$.
		\item $\arII$ turns $(\A^{**}, w^{*})$ into a right topological semigroup. 
		Namely, the map $\arII_{\mu}^{(r)}$ sending $\nu$ to $\mu \arII \nu $ is weak-star continuous for every $\mu \in \A^{**}$.
		\een
	\end{f}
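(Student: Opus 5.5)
The two claims follow directly from the definitions of the Arens products, together with the weak-star continuity of the orbit maps from Lemma~\ref{lemma:bidual_action_is_orbit_weak_star_continuous}. Fix $\mu \in \A^{**}$ and a net $\nu_\lambda \to \nu$ weak-star in $\A^{**}$. By the definition of the weak-star topology, it suffices to test convergence against an arbitrary $\varphi \in \A^{*}$.

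For (1), the definition of $\arI$ gives
$$
\langle \nu_\lambda \arI \mu, \varphi\rangle = \langle \nu_\lambda, \mu \circledast \varphi\rangle .
$$
Here $\mu \circledast \varphi$ is a fixed element of $\A^{*}$, independent of $\lambda$. Weak-star convergence of $\nu_\lambda$ therefore yields convergence to $\langle \nu, \mu \circledast \varphi\rangle = \langle \nu \arI \mu, \varphi\rangle$. So $\arI_{\mu}^{(l)}$ is weak-star continuous.

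For (2), the definition of $\arII$ gives
$$
\langle \mu \arII \nu_\lambda, \varphi\rangle = \langle \nu_\lambda, \varphi \circledast \mu\rangle .
$$
Again $\varphi \circledast \mu \in \A^{*}$ is fixed, so the right side converges to $\langle \nu, \varphi\circledast\mu\rangle = \langle \mu\arII\nu,\varphi\rangle$. Hence $\arII_{\mu}^{(r)}$ is weak-star continuous.

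It remains to check that $(\A^{**},\arI)$ and $(\A^{**},\arII)$ are semigroups. Associativity is supplied by Fact~\ref{lemma:arens_properties}; alternatively, it can be derived from the identities $(\mu\arI\nu)\circledast\varphi = \mu\circledast(\nu\circledast\varphi)$ and $\varphi\circledast(\mu\arII\nu) = (\varphi\circledast\mu)\circledast\nu$. Combining associativity with the one-sided continuity just shown gives a left topological semigroup in case (1) and a right topological semigroup in case (2).

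There is no real obstacle. The only point needing care is to keep track of which variable the functional is paired with, so that the moving net always sits in the first slot of the pairing against a fixed element of $\A^{*}$. This bookkeeping also shows why the other one-sided maps, $\arI_\mu^{(r)}$ and $\arII_\mu^{(l)}$, need not be continuous: in those cases the net appears inside the functional, as in $\langle\mu, \nu_\lambda\circledast\varphi\rangle$, and weak-star convergence of $\nu_\lambda$ alone gives no control there.
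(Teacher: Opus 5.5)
Your argument is correct and is the standard definition-unfolding proof behind this fact; the paper gives no proof of its own and only cites Palmer's Theorem~1.4.2. In each case you pair the moving net against a fixed functional ($\mu \circledast \varphi$, resp.\ $\varphi \circledast \mu$), and associativity comes from Fact~\ref{lemma:arens_properties}; the only nitpick is that your opening appeal to Lemma~\ref{lemma:bidual_action_is_orbit_weak_star_continuous} is superfluous, since nothing beyond the definition of the weak-star topology is actually used.
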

	
	In virtue of Fact \ref{lemma:arens_properties}, $B_{\A^{**}}$ is closed under both Arens products.
	Using the Banach–Alaoglu theorem, we get the following corollary.
	\begin{cor}~
		
		\ben
		\item $(B_{\A^{**}}, \arI, w^{*})$ is a compact left topological semigroup.
		\item $(B_{\A^{**}}, \arII, w^{*})$ is a compact right topological semigroup.
		\een
	\end{cor}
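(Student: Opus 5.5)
The corollary has two parts: $B_{\A^{**}}$ is a semigroup under each Arens product, it is weak-star compact, and the appropriate one-sided translations are continuous. I will check these three ingredients in turn.

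\textbf{Closure under the products.} For $\mu,\nu\in B_{\A^{**}}$, Fact~\ref{lemma:arens_properties} gives $\lVert \mu\arI\nu\rVert\le\lVert\mu\rVert\lVert\nu\rVert\le 1$, and likewise for $\arII$. So both products restrict to binary operations on $B_{\A^{**}}$. Associativity is inherited directly from the associativity identities in Fact~\ref{lemma:arens_properties}. Hence $(B_{\A^{**}},\arI)$ and $(B_{\A^{**}},\arII)$ are semigroups.

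\textbf{Compactness.} $B_{\A^{**}}$ is the closed unit ball of the dual space $(\A^{*})^{*}$. By the Banach--Alaoglu theorem it is weak-star compact. It is Hausdorff, being a subspace of the Hausdorff space $(\A^{**},w^{*})$.

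\textbf{One-sided continuity.} By Fact~\ref{lemma:bidual_ball_is_left_right_topological_semigroup}(1), for each fixed $\mu$ the map $\arI_{\mu}^{(l)}\colon\nu\mapsto\nu\arI\mu$ is weak-star continuous on all of $\A^{**}$. Its restriction to $B_{\A^{**}}$ with the subspace topology is therefore continuous, and it takes values in $B_{\A^{**}}$ by the first step. This is exactly the statement that $(B_{\A^{**}},\arI,w^{*})$ is a compact left topological semigroup. The same argument with part (2) of that Fact, applied to $\arII_{\mu}^{(r)}$, gives the right topological statement for $\arII$.

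There is no real obstacle here. The only point needing care is to match the convention for which translation is continuous ("left topological" meaning $\nu\mapsto\nu\mu$ is continuous) with the paper's convention, which is fixed by Fact~\ref{lemma:bidual_ball_is_left_right_topological_semigroup}.
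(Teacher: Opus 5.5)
Your proposal is correct and follows essentially the same route as the paper: closure of $B_{\A^{**}}$ under both Arens products via the norm bounds in Fact~\ref{lemma:arens_properties}, weak-star compactness via Banach--Alaoglu, and one-sided continuity by restricting the maps from Fact~\ref{lemma:bidual_ball_is_left_right_topological_semigroup}. You also spell out associativity and the restriction step, which the paper leaves implicit.
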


	\subsection{Classes of Functionals on Banach Algebras}\label{subsection:classes_of_functionals}
	Fragmentability, and fragmented functions had been studied in a variety of settings (multi-valued functions, uniform spaces, etc.).
	Here we show a specific (but equivalent) version which is most relevant.
	Other approaches can be found in \cite{JR,JOPV,Me-fr98}.
	\begin{defin} \label{def:fr} \cite{JOPV,Me-fr98} 
		Let $(X,\tau)$ be a topological space.
		A (not necessarily continuous) function $f\colon X \to M$ to a metric space $(M, d)$ is said to be \emph{fragmented} if for every nonempty subset $A$ of $X$ and every $\eps > 0$ there exists an open subset $O$ of $X$ such that $O \cap
		A$ is nonempty and $\diam_{d} \left(f(O \cap A)\right) <\eps$.
	\end{defin}
	\begin{defin} \label{d:fr-family} \
		Let $(X, \tau)$ be a topological space and let $\mathcal{F} \subseteq \R^{X}$ be a family of functions.
		It is said the $\mathcal{F}$ is:
		\ben
		\item\cite{GM1} A \emph{fragmented family} if the condition of Definition
		\ref{def:fr} holds simultaneously for all $f \in \mathcal F$.
		That is, $\diam f(O \cap A) < \eps$  for every $f \in \mathcal{
			F}$.
		\item \cite{GM-rose} An \emph{eventually fragmented family} if every 
		sequence in $F$ has a subsequence which is a fragmented family on $X$.
		\een
	\end{defin} 
	\begin{defin}
		Let $X$ be a set and $\{f_{n}\}_{n \in \N} \subseteq \R^{X}$ be a bounded family of function.
		It is said that $\{f_{n}\}_{n \in \N}$ is \emph{independent} if there are $a < b \in \R$ such that for every finite, disjoint $N, M \subseteq \N$ we can find $x \in X$ satisfying:
		$$
		  \forall n \in N: f_{n}(x) \leq a \text{ and } \forall m \in M: f_{m}(x) \geq b.
		$$
	\end{defin}
	We will now give a few equivalent definitions for Asplund and tame bounded subsets.
	Further details could be found in \cite[p.~22]{Fabian1997} and \cite[Thm.~2.6]{GM-MTame} in the Asplund case.
	For the tame case, the reader is referred to the summary in \cite[Thm.~2.4]{GM-MTame}. 
	
	\begin{defin}\label{defin:asplund_tame_subsets}
		Let $V$ be a Banach space and $C \subseteq V$ be a bounded subset.
		$C$ is said to be:
		\ben
		\item \emph{Asplund} if one of the following equivalent conditions holds:
		\ben
		\item $C$ is fragmented as a family of functions over $(B_{V^{*}}, w^{*})$.
		\item For every countable subset $D \subseteq C$, the pseudo-metric space $(B_{V^{*}}, \rho_{D})$ is separable, where:
		$
		\rho_{D}(\varphi, \psi) :=
		\sup\limits_{x \in D} \lvert \langle \varphi - \psi, x\rangle \rvert.
		$
		\een
		\item \emph{Tame} if one of the following equivalent conditions holds:
		\ben
		\item $C$ contains no $\ell^{1}$-sequence.
		\item Every sequence in $C$ contains a weak-Cauchy subsequence.
		\item $C$ is an eventually fragmented family over $(B_{V^{*}}, w^{*})$.
		\item There are no independent sequences $\{x_{n}\}_{n \in \N} \subseteq C$ over $B_{V^{*}}$.
		\een
		\een
	\end{defin}

	\begin{f} \label{fact:image_of_small_subset}
		Let $T\colon V \to W$ be a bounded operator between Banach spaces.
		If $A \subseteq V$ is some bounded Asplund (resp. tame) subset, then so is $T(A)$.
	\end{f}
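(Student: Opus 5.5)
The key tool is the adjoint $T^{*}\colon W^{*} \to V^{*}$. It is bounded, and by Fact~\ref{fact:adjoint_is_weak_and_weak_star_cont} it is (weak-star, weak-star) continuous. Rescaling $T$ by a positive constant does not affect either property (fragmentability and the tame conditions are invariant under scalar multiples), so I will assume $\lVert T\rVert \le 1$. Then $T^{*}(B_{W^{*}}) \subseteq B_{V^{*}}$, and the restriction $T^{*}\colon (B_{W^{*}}, w^{*}) \to (B_{V^{*}}, w^{*})$ is continuous. Every element $Tx$ of $T(A)$, viewed as a function on $B_{W^{*}}$, factors through this map:
$$
\langle \psi, Tx\rangle = \langle T^{*}\psi, x\rangle \quad (\psi \in W^{*},\ x \in V).
$$
So $T(A)$, as a family on $B_{W^{*}}$, is exactly the family $\{x \circ T^{*} : x \in A\}$. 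Boundedness of $T(A)$ is immediate.

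For the tame case I will use criterion (a) of Definition~\ref{defin:asplund_tame_subsets}(2), or equivalently (d); (a) is the cleanest. Suppose $\{Tx_{n}\}$ is a sequence in $T(A)$. Since $A$ is tame, some subsequence $\{x_{n_k}\}$ is weakly Cauchy. Bounded operators are (weak, weak) continuous, and more directly, for each $\psi \in W^{*}$,
$$
\langle \psi, Tx_{n_k}\rangle = \langle T^{*}\psi, x_{n_k}\rangle
$$
converges. Hence $\{Tx_{n_k}\}$ is weakly Cauchy, and criterion (b) holds for $T(A)$. Alternatively, via (d): an independent sequence $\{Tx_{n}\}$ over $B_{W^{*}}$ with witnesses $\psi$ yields an independent sequence $\{x_{n}\}$ over $B_{V^{*}}$ with witnesses $T^{*}\psi$. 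Either way the tame case is routine.

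For the Asplund case I will use criterion (a), fragmentability over $(B_{W^{*}}, w^{*})$. Fix a nonempty $A' \subseteq B_{W^{*}}$ and $\eps>0$. Let $A'' := T^{*}(A') \subseteq B_{V^{*}}$, which is nonempty. Fragmentability of $A$ gives a weak-star open set $O \subseteq B_{V^{*}}$ with $O \cap A'' \neq \emptyset$ and $\diam\, x(O\cap A'') < \eps$ for all $x \in A$ simultaneously. Set $U := (T^{*})^{-1}(O) \cap B_{W^{*}}$. This set is relatively weak-star open in $B_{W^{*}}$ by continuity, and $U \cap A'$ is nonempty. Moreover $T^{*}(U \cap A') \subseteq O \cap A''$, so for each $x\in A$,
$$
\diam\, \{\langle \psi, Tx\rangle : \psi \in U\cap A'\} \le \diam\, x(O\cap A'') < \eps.
$$
This shows $T(A)$ is a fragmented family.

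The only point needing care is the topology in the fragmentability definition. Definition~\ref{def:fr} asks for $O$ open in $X = B_{W^{*}}$ with its relative weak-star topology, so relative openness suffices. This is the main, if minor, obstacle: the argument needs the pullback under a continuous map into the ball, which is why I normalize $\lVert T\rVert\le 1$ first.
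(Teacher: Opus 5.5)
Your argument is correct, but the paper does not prove this directly. It cites \cite[Lemma~3.7]{TameLCS}, which shows that bounded operators preserve any ``bornological class'' of bounded sets, and uses the fact, also from \cite{TameLCS}, that Asplund and tame sets are such classes.

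You work straight from Definition~\ref{defin:asplund_tame_subsets} instead:
\begin{itemize}
\item For tameness, you push weakly Cauchy subsequences forward, or alternatively transfer independence witnesses $\psi$ to $T^{*}\psi$.
\item For Asplundness, you normalize $\lVert T\rVert\le 1$ and pull fragmentability back along the weak-star continuous map $T^{*}\colon (B_{W^{*}},w^{*})\to(B_{V^{*}},w^{*})$.
\end{itemize}
Each step checks out, including the nonemptiness of $U\cap A'$ and the inclusion $T^{*}(U\cap A')\subseteq O\cap A''$.

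The citation buys generality, since one lemma covers every bornological class at once, but it sends the reader to an external framework. Your proof is self-contained and needs only Fact~\ref{fact:adjoint_is_weak_and_weak_star_cont}. It also makes explicit the mechanism: each element of $T(A)$, viewed as a function on $B_{W^{*}}$, is an element of $A$ composed with a continuous map between dual balls.

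One cosmetic point: you announce criterion (a) for the tame case, but your main argument actually verifies criterion (b), weak-Cauchy subsequences. The label should be adjusted.
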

	\begin{proof}
		In \cite[Lemma~3.7]{TameLCS} they prove this result for the more general notion of bornological classes, which includes Asplund and tame subsets \cite[Prop.~3.2 and Remark~4.14]{TameLCS}.
	\end{proof}
	\begin{defin}
		A Banach space $V$ is an \emph{Asplund} space if the dual of every separable subset is separable.
		A Banach space $V$ is said to be \emph{Rosenthal} if there are no $\ell^{1}$ sequences in $B_{V}$.
	\end{defin}
	\begin{f}~
		\ben
			\item \cite{NP} A Banach space $V$ is Asplund if and only if $B_{V}$ is an Asplund subset.
			\item \cite{GM-MTame} A Banach space $V$ is Rosenthal if and only if $B_{V}$ is a tame subset. 
		\een
	\end{f}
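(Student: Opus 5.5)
For part (1) we use characterization (b) of Definition~\ref{defin:asplund_tame_subsets}. Part (2) is essentially a matter of unwinding definitions. Throughout, "separable subset" in the definition of an Asplund space is read as "separable closed subspace". The key device is the following: for a subset $D \subseteq B_{V}$ with closed linear span $Y$, the pseudo-metric $\rho_{D}$ on $B_{V^{*}}$ is controlled by the norm of restrictions to $Y$. The Hahn--Banach theorem then lets us pass between $B_{V^{*}}$ and $B_{Y^{*}}$.

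\emph{(1), forward direction.} Suppose $V$ is Asplund and let $D \subseteq B_{V}$ be countable. Set $Y := \overline{\spn}(D)$; this is separable, so $Y^{*}$ is norm-separable. Let $R\colon B_{V^{*}} \to B_{Y^{*}}$ be the restriction map, $R(\varphi) := \varphi|_{Y}$. Since $D \subseteq B_{Y}$, we have
$$
\rho_{D}(\varphi, \psi) \leq \lVert R(\varphi) - R(\psi) \rVert_{Y^{*}}.
$$
The set $R(B_{V^{*}})$ is a subset of the separable metric space $(Y^{*}, \lVert\cdot\rVert)$, so it contains a countable norm-dense subset $\{R(\varphi_{n})\}_{n \in \N}$. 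By the inequality above, $\{\varphi_{n}\}_{n \in \N}$ is $\rho_{D}$-dense in $B_{V^{*}}$. Hence $B_{V}$ satisfies condition (b), i.e.\ it is an Asplund subset.

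\emph{(1), reverse direction.} Suppose $B_{V}$ is an Asplund subset and let $Y \subseteq V$ be a separable closed subspace. Choose a countable norm-dense set $D \subseteq B_{Y}$. Then for $\varphi, \psi \in B_{V^{*}}$,
$$
\rho_{D}(\varphi,\psi) = \lVert \varphi|_{Y} - \psi|_{Y} \rVert_{Y^{*}}.
$$
By Hahn--Banach, $R$ maps $B_{V^{*}}$ onto $B_{Y^{*}}$. Therefore a countable $\rho_{D}$-dense subset of $B_{V^{*}}$, which exists by hypothesis (b), maps to a countable norm-dense subset of $B_{Y^{*}}$. Thus $Y^{*}$ is separable, and $V$ is Asplund. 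The only points needing care are that restriction is norm-preserving onto $B_{Y^{*}}$ (Hahn--Banach) and that subsets of separable metric spaces are separable.

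\emph{(2).} By definition, $V$ is Rosenthal exactly when $B_{V}$ contains no $\ell^{1}$-sequence. This is precisely condition (a) of the tame case of Definition~\ref{defin:asplund_tame_subsets}, so the equivalence is immediate. The substantive content lies in the equivalence of (a)--(d) asserted in that definition, which we take as given. That equivalence rests on Rosenthal's $\ell^{1}$ dichotomy (for (a)$\Leftrightarrow$(b)) and on the results of \cite{GM-rose, GM-MTame} (for (c) and (d)). It would be the main obstacle if it had to be proved from scratch.
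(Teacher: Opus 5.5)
Your proof is correct relative to the definitions the paper fixes. The paper gives no argument of its own: it simply cites Namioka--Phelps and Glasner--Megrelishvili. So the comparison is really about where the work sits.

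In (1), you use the comparison $\rho_{D}(\varphi,\psi)\le\lVert\varphi|_{Y}-\psi|_{Y}\rVert_{Y^{*}}$, with equality once $D$ is norm-dense in $B_{Y}$. Together with the Hahn--Banach fact that restriction maps $B_{V^{*}}$ onto $B_{Y^{*}}$, this is exactly what is needed. Part (2) is indeed a tautology, because the paper defines Rosenthal spaces by the absence of $\ell^{1}$-sequences in $B_{V}$.

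The content behind the citation for (1) is the fragmentability side. For $C=B_{V}$, condition (a) says precisely that $(B_{V^{*}},w^{*})$ is fragmented by the dual norm, since $\sup_{x\in B_{V}}\operatorname{diam}\langle O\cap A,x\rangle=\operatorname{diam}_{\lVert\cdot\rVert}(O\cap A)$. The theorem that this characterizes Asplund spaces is the genuinely deep input. Likewise, the tame statement only has content when tameness is read via (c) or (d) rather than (a).

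By choosing characterization (b), you reduce (1) to Hahn--Banach plus the separable-dual definition of Asplund spaces, and (2) to unwinding definitions. This is clean and fully rigorous given Definition~\ref{defin:asplund_tame_subsets}, but it is not independent of the cited literature. The asserted equivalence (a)$\Leftrightarrow$(b) for Asplund subsets is itself a localized Namioka--Phelps/Stegall-type theorem. The equivalence of (a)--(d) for tame subsets rests on Rosenthal's $\ell^{1}$ theorem and the Glasner--Megrelishvili analysis of independent and eventually fragmented families. Had the paper taken (a) as the definition of an Asplund subset, your argument would have to invoke that theorem explicitly.
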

	Another important property is that reflexive spaces are necessarily Asplund, which are necessarily tame.
	In other words:
	$$
	\text{Reflexive} \subseteq \text{Asplund} \subseteq \text{Tame}.
	$$
	
	\begin{defin}\cite[Def.~7.10]{TameLCS}
		Let $V$ be a Banach space.
		A weak-star compact subset $M \subseteq V$ is said to be \emph{co-tame} if $B_{V}$ is tame as a family of functions over $M$.
	\end{defin}
	We will use the following localization of Haydon's theorem.
	\begin{f}\cite[Thm.~10.12]{TameLCS} \label{fact:co_tame_milman}
		Let $V$ be a Banach space and $M \subseteq V^{*}$ be a convex weak-star compact subset.
		Then $M$ is co-tame if and only if
		$$
		\overline{\co}^{w^{*}}(N) = \overline{\co}(N),
		$$
		for every weak-star closed subset $N \subseteq M$.
	\end{f}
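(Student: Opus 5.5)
The plan is to work throughout with $B_V$ viewed as a family of weak-star continuous affine functions on $M$, and to prove the two implications separately. The forward direction goes through Rosenthal/Bourgain--Fremlin--Talagrand theory and a barycentric (Choquet) argument. The reverse direction builds, from an independent sequence, a ``Cantor-type'' weak-star closed $N\subseteq M$ on which the two hulls differ.

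\emph{($\Rightarrow$).} Assume $M$ is co-tame and let $N\subseteq M$ be weak-star closed. Mazur (Fact~\ref{fact:mazur}) gives $\overline{\co}(N)\subseteq\overline{\co}^{w^*}(N)$ trivially. Suppose, for contradiction, that $\mu\in\overline{\co}^{w^*}(N)\setminus\overline{\co}(N)$.
\begin{enumerate}
\item By Hahn--Banach in the norm topology of $V^*$, there are $x^{**}\in B_{V^{**}}$ and $\eps>0$ with $\langle x^{**},\mu\rangle>\sup_{\psi\in N}\langle x^{**},\psi\rangle+\eps$.
\item By Goldstine (Fact~\ref{fact:goldstines}), $x^{**}|_M$ lies in the pointwise closure of $B_V|_M$.
\item Co-tameness means that $B_V|_M$ contains no independent sequence. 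By Rosenthal's dichotomy and BFT, the pointwise closure of $B_V|_M$ in $\R^M$ therefore consists of Baire-1 (fragmented) functions and is angelic.
\item Hence there is a sequence $x_n\in B_V$ with $x_n\to x^{**}$ pointwise on $M$. This includes convergence at $\mu$, since $\mu\in M$ because $M$ is convex and weak-star closed.
\item Represent $\mu$ as the barycenter of a Radon probability measure $m$ on the compact set $(N,w^*)$. This is standard: points of the closed convex hull of a compact set are barycenters.
\item By dominated convergence,
\[
\langle x^{**},\mu\rangle=\lim_n\langle \mu,x_n\rangle=\lim_n\int_N x_n\,dm=\int_N x^{**}\,dm\le\sup_N x^{**},
\]
which contradicts step~1.
\end{enumerate}

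\emph{($\Leftarrow$).} Suppose $M$ is not co-tame, so there is an independent sequence $\{x_n\}\subseteq B_V$ over $M$, with constants $a<b$. The construction proceeds in three steps.
\begin{enumerate}
\item For each finite disjoint pair $(P,Q)$, choose $\psi_{P,Q}\in M$ realizing the pattern: $x_n(\psi_{P,Q})\le a$ for $n\in P$ and $x_n(\psi_{P,Q})\ge b$ for $n\in Q$. Taking weak-star cluster points along the patterns of each $\sigma\in\{0,1\}^{\N}$ produces $\psi_\sigma\in M$ satisfying the pattern for all $n$.
\item Let $N$ be the weak-star closure of $\{\psi_\sigma\}$. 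Let $m$ be a Radon measure on $N$ that projects onto the product (coin-flipping) measure on $\{0,1\}^{\N}$; such a lift exists on the closed set of cluster points. Let $\mu\in\overline{\co}^{w^*}(N)$ be its barycenter.
\item Show that $\mu\notin\overline{\co}(N)$. The idea is that $\mu$ ``averages'' the $x_n$ in a way no finite convex combination can mimic. Fix $\psi=\sum_{i\le k}t_i\varphi_i$ with $\varphi_i\in N$. Passing to a subsequence, each sequence $x_n(\varphi_i)$ converges, so $x_n(\psi)$ converges. Meanwhile, independence of the coordinates under $m$ gives $\int(x_n-x_{n'})^+\,dm\gtrsim (b-a)/4$ for distinct $n,n'$.
\end{enumerate}

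To turn step~3 into a genuine norm bound $\|\mu-\psi\|\ge c(b-a)$, I would test against $x^{**}$ of the form $\pm(x_n-x_{n'})$ combined with truncations. Better still, I would use the $\ell^1$-isomorphism $\ell^1\to V$, $e_n\mapsto x_n$ (restricted to $N$), and the Haar measure argument showing that the product measure is not a norm limit of finitely supported measures in $C(N)^*$ restricted to $\overline{\spn}\{x_n\}$.

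The main obstacle is this last step: extracting a single functional that separates $\mu$ uniformly from all of $\co(N)$. The forward direction is comparatively routine once BFT angelicity is invoked. For the reverse direction, the first approach I would try is to rescale so that $a=-1$ and $b=1$, and then compare $\mu$ with $\psi$ on Rademacher-like averages $\frac1k\sum_{j\le k}\pm x_{n_j}$. The independence under $m$ should keep the $\mu$-values near the midpoint, while, after passing to a subsequence, the $\psi$-values can be forced toward the extremes.
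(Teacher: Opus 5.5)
The paper imports this statement as a Fact from the literature and gives no proof, so your proposal has to stand on its own. As written, it has a genuine gap in each direction. In $(\Rightarrow)$, steps 3--4 fail without metrizability. The pointwise closure of a tame family on a non-metrizable compactum need not be angelic, and $x^{**}|_M$ need not be a pointwise limit of a \emph{sequence} from $B_V$. For example, let $V=c_0(\Gamma)$ with $\Gamma$ uncountable and $M=B_{\ell^1(\Gamma)}$. Then $M$ is co-tame, since $c_0(\Gamma)$ contains no $\ell^1$. But for $x^{**}=1_\Gamma$, any sequence in $c_0(\Gamma)$ has countable joint support, so it does not converge to $x^{**}$ at $e_\gamma$ for $\gamma$ outside that support. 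Your argument is fine for separable $V$, where the closure is a Rosenthal compactum and hence Fr\'echet--Urysohn by BFT. In general, keep the Goldstine net $x_\alpha\to x^{**}$ and replace dominated convergence by Fremlin's theorem on pointwise compact sets of measurable functions. Every sequence in $B_V|_N$ has a pointwise convergent, hence $m$-measurable, subsequence. So every function in the pointwise closure is $m$-measurable, and on that closure pointwise convergence of nets implies convergence in $m$-measure, hence in $L^1(m)$ by boundedness. This gives $\langle x^{**},\mu\rangle=\lim_\alpha\int_N x_\alpha\,dm=\int_N x^{**}\,dm\le\sup_N x^{**}$, and your contradiction goes through.

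In $(\Leftarrow)$ the problem is not only the unfinished step 3: the claim $\mu\notin\overline{\co}(N)$ is false for your $N$. Take $V=\ell^1$, $M=B_{\ell^\infty}$, $x_n=e_n$, $a=-1$, $b=1$. Then $\psi_\sigma=2\sigma-1$ is forced, so $N=\{-1,1\}^{\N}$ and $m$ is the coin-flipping measure. Hence $\mu=0=\tfrac12(\mathbf 1+(-\mathbf 1))\in\co(N)$. In fact $\overline{\co}(N)=B_{\ell^\infty}=\overline{\co}^{w^*}(N)$, because every $t\in[-1,1]^{\N}$ is within $2/k$ in norm of an average of $k$ sign sequences. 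More generally, suppose the $x_n$ take only the values $a,b$ on $N$. Then the midpoint $\psi$ of two points of $N$ with complementary patterns satisfies $\langle\mu-\psi,x_n\rangle=0$ for all $n$. So no test vector from $\overline{\spn}\{x_n\}$ can separate: not differences, not Rademacher averages, not the $\ell^1$-isomorphism.

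The missing idea is to realize only the patterns coming from clopen sets. Enumerate the clopen subsets of $\Delta=\{0,1\}^{\N}$ as $(B_n)_{n\in\N}$. Let $N'$ be the set of $\psi\in M$ for which some $u\in\Delta$ satisfies $x_n(\psi)\ge b$ whenever $u\in B_n$ and $x_n(\psi)\le a$ whenever $u\notin B_n$.
\begin{itemize}
\item $N'$ is the projection of a closed subset of $M\times\Delta$, hence weak-star closed.
\item The point $u$ is unique because clopen sets separate points. So $\tau\colon N'\to\Delta$, $\psi\mapsto u$, is continuous (closed graph) and surjective (independence plus compactness).
\item Lift the Haar measure $h$ of $\Delta$ to a Radon probability $\rho$ on $N'$, and let $\mu$ be its barycenter. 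Then $\mu\in\overline{\co}^{w^*}(N')$.
\item Given $\psi=\sum_{i\le k}\lambda_i\varphi_i\in\co(N')$, pick $n$ with $h(B_n)\ge 1-\eps$ and $\tau(\varphi_1),\dots,\tau(\varphi_k)\notin B_n$. This is possible since $h$ has no atoms.
\item Then $\langle\psi,x_n\rangle\le a$, while $\langle\mu,x_n\rangle\ge b-\eps(|b|+C)$ with $C=\sup_{\varphi\in M}\lVert\varphi\rVert$.
\end{itemize}
Hence $\lVert\mu-\psi\rVert\ge(b-a)/2$ once $\eps$ is small. A $\psi$-dependent witness with a uniform bound is all that is required; you do not need a single functional separating $\mu$ from all of $\co(N')$.
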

	We will use this fact in conjunction of the following property
	\begin{lemma} \label{lemma:separable_implies_cotame}
		Let $V$ be a Banach space and let $M \subseteq V^{*}$ be a weak-star compact subset of the dual.
		If $M$ is norm separable, then it is co-tame.
	\end{lemma}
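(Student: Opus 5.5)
We must show that $B_{V}$, viewed as a family of functions on $(M, w^{*})$, is tame. The plan is to use the independence characterization (condition (d) in Definition~\ref{defin:asplund_tame_subsets}, applied to functions on $M$). Equivalently, we show that every sequence in $B_V$ has a subsequence converging pointwise on $M$. Pointwise convergence rules out independence, since an independent sequence has no pointwise convergent subsequence.

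\emph{Step 1: reduce to a countable dense subset.} Let $\{m_k\}_{k\in\N}$ be norm dense in $M$. Given a sequence $\{x_n\} \subseteq B_V$, each scalar sequence $\langle m_k, x_n\rangle$ is bounded, by $\sup_{m \in M}\lVert m\rVert$; this is finite because $M$ is weak-star compact and hence bounded by the uniform boundedness principle. A Cantor diagonal argument then gives a subsequence $\{x_{n_j}\}$ such that $\langle m_k, x_{n_j}\rangle$ converges for every $k$.

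\emph{Step 2: extend to all of $M$.} For $m \in M$ and $\eps>0$, choose $k$ with $\lVert m - m_k\rVert < \eps$. Since $\lVert x_{n_j}\rVert \le 1$, we have $\lvert \langle m - m_k, x_{n_j}\rangle\rvert < \eps$ for all $j$. Hence the sequence $\langle m, x_{n_j}\rangle$ is $2\eps$-close to a convergent sequence, so it is Cauchy up to $4\eps$. As $\eps$ is arbitrary, it converges. Thus $\{x_{n_j}\}$ converges pointwise on $M$.

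\emph{Step 3: conclude.} Suppose $\{x_n\}$ were independent on $M$ with constants $a<b$. Then any subsequence would also be independent. Fix $m$ and a convergent subsequence. Choosing $N$ and $M'$ to be infinitely interleaved index sets is not directly allowed, since they must be finite. However, finite independence already suffices: for each finite disjoint pair there is a witness point, and independence of a subsequence means that for every finite disjoint $N,M'$ there is a point separating them. To contradict pointwise convergence, use the standard fact that a bounded independent sequence has no pointwise convergent subsequence on the weak-star compact set $M$. The independence sets $\{m: \langle m,x_n\rangle\le a\}$ and $\{m: \langle m,x_n\rangle \ge b\}$ are weak-star closed, so by compactness the finite intersection property extends to infinite $N,M'$. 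Taking $N$ to be the even indices and $M'$ the odd indices of the convergent subsequence yields a point $m$ at which the sequence oscillates between $\le a$ and $\ge b$, a contradiction.

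The main obstacle is this compactness step. It is exactly where weak-star compactness of $M$ and weak-star continuity of each $x_n$ on $M$ are used. Alternatively, one could use the weak-Cauchy formulation and avoid independence altogether: if $\{x_{n_j}\}$ converges pointwise on the weak-star compact $M$, then $M$ sees no $\ell^1$-sequence, which is the definition of tameness of the family. I would present the Cantor diagonal and density steps as the core, and cite \cite[Thm.~2.4]{GM-MTame} for the equivalence between pointwise convergent subsequences and tameness of a bounded family of continuous functions on a compact space.
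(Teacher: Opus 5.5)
Your proof is correct, but it takes a different route from the paper's.

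\textbf{The paper's route.} The paper argues by contradiction. An independent sequence $\{x_n\}\subseteq B_V$ with constants $a<b$ yields, for every $\alpha\in\{0,1\}^{\N}$, some $\varphi_\alpha\in M$ realizing the full infinite pattern $\alpha$. This tacitly relies on the same compactness upgrade from finite to infinite patterns that you spell out. Then $\lVert\varphi_\alpha-\varphi_\beta\rVert\ge b-a$ for $\alpha\neq\beta$ gives an uncountable uniformly discrete subset of $M$, which is impossible in a separable metric space.

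\textbf{Your route.} You use separability positively. Diagonalizing over a countable norm-dense subset of $M$, together with the uniform bound $\lVert x_n\rVert\le 1$, shows that every sequence in $B_V$ has a subsequence converging pointwise on all of $M$. Compactness enters only to show that an independent sequence admits no such subsequence.

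\textbf{What each buys.} The paper's argument is shorter, getting the contradiction from a single cardinality count. Yours cleanly separates the two hypotheses. Steps 1--2 use only norm separability, so they work for any norm-separable $M$, compact or not. Step 3 is exactly where weak-star compactness is indispensable. For instance, the finitely supported $\{0,1\}$-valued sequences in $\ell^\infty$ form a countable set on which $x_n=e_n\in B_{\ell^1}$ is independent with $a=0$, $b=1$, yet the sequence converges pointwise to $0$ there. Your route also ties the lemma to the characterization of tameness by pointwise convergent subsequences.

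\textbf{Minor remarks.}
\begin{itemize}
\item Boundedness of $\langle m_k,x_n\rangle$ needs only $\lVert m_k\rVert<\infty$, not the uniform boundedness principle.
\item The first half of your Step 3 (the comments about interleaved index sets and ``fix $m$'') is muddled. Cut it down to the finite-intersection-property argument you end with.
\item Your closing ``alternative'' via $\ell^1$-sequences is not literally the definition of tameness of the family; it would need Rosenthal's theorem in $C(M)$. Since your main argument is self-contained, you can drop it.
\end{itemize}
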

	\begin{proof}
		By contradiction, assume that $M$ is not co-tame.
		Therefore, we can find a sequence $\{x_{n}\}_{n \in \N} \subseteq B_{V}$ and $a < b \in \R$ such that:
		$$
		\forall \alpha \in \{0, 1\}^{\N} \, \exists \varphi_{\alpha} \in M\, \forall n \in \N: \langle \varphi_{\alpha}, x_{n}\rangle \in \begin{cases}
			[b, \infty) & \alpha_{n} = 1\\
			(-\infty, a] & \alpha_{n} = 0
		\end{cases}.
		$$
		Write $0 < \eps := b - a$.
		It is easy to see that for every $\alpha \neq \beta \in \{0, 1\}^{\N}$ we have:
		\begin{align*}
			\lVert \varphi_{\alpha} - \varphi_{\beta} \rVert & = 
			\sup\limits_{x \in B_{V}} \lvert \langle \varphi_{\alpha}, x\rangle - \langle \varphi_{\beta}, x\rangle\rvert\\
			& \geq 
			\sup\limits_{n \in \N} \lvert \langle \varphi_{\alpha}, x_{n}\rangle - \langle \varphi_{\beta}, x_{n}\rangle\rvert\\
			& \geq b - a = \eps > 0.
		\end{align*}
		Therefore, $\{\varphi_{\alpha}\}_{\alpha \in \{0, 1\}^{\N}}$ is a discrete subset of $M$ whose cardinality is $2^{\aleph_{0}}$, in contradiction to $M$ being separable.
	\end{proof}
	Weakly almost periodic functionals over Banach algebras have many interesting properties (see for example \cite{burckel} and \cite{Ulger}).
	In \cite{TameFunc}, some generalizations of this concept were proposed, namely Asplund and tame functionals.
	Equivalent definitions, and specific properties in the case of the group algebra were further studied in \cite{meTameFunc}.
	The following is one set of such equivalent definitions suitable for our needs.
	For a more detailed exposition the reader is referred to \cite{meTameFunc}.
	\begin{defin}
		A functional $\varphi \in \A^{*}$ is said to be:
		\ben
		\item \emph{weakly almost periodic} if $B_{\A} \circledast \varphi$ is weakly relatively compact in $\A^{*}$.
		\item \emph{left Asplund} if $B_{\A} \circledast \varphi$ is an Asplund subset of $\A^{*}$. 
		\item \emph{right Asplund} if $\varphi \circledast B_{\A}$ is an Asplund subset of $\A^{*}$.
		\item \emph{left tame} if $B_{\A} \circledast \varphi$ is a tame subset of $\A^{*}$.
		\item \emph{right tame} if $\varphi \circledast B_{\A}$ is a tame subset of $\A^{*}$.
		\een
		We will write $\wap(\A), \asp^{(l)}(\A), \asp^{(r)}(\A), \tame^{(l)}(\A), \tame^{(r)}(\A)$ for the appropriate subsets of $\A^{*}$.
	\end{defin}
	\begin{remark}
		Note that we do not distinguish between ``left" and ``right" weakly almost periodic functionals since those definitions are equivalent. See \cite[Cor.~1.12]{burckel} for the case of semigroup algebras or Proposition \ref{prop:wap_equivalence} for a detailed explanation.
		As far as we are aware, the relationship between left and right Asplund or tame functionals is still an open question \cite[Quest.~7.2]{meTameFunc}.
	\end{remark}
	\begin{remark}
		Unlike \cite{meTameFunc}, we include in these definitions all of the functionals in $\A^{*}$ rather than the left or right uniformly continuous ones.
		We do not use any property that rely on this restriction.
		This discussion is of course redundant in the case of weakly almost periodic functionals, since every such functional is uniformly continuous \cite[Thm.~2]{Ulger}.
	\end{remark}

	Recall that an operator $T\colon V\to W$ between two Banach spaces is weakly relatively compact/Asplund/tame if $T(B_{V})$ is weakly relatively compact/Asplund/tame in $W$, respectively.
	The following fact is a consequence of the Davis-Figiel-Johnson-Pe\l{}czy\'{n}ski factorization which can be found in \cite{DFJP}.
	Its application to reflexive, Asplund and Rosenthal spaces can be found in \cite[Cor.~3]{DFJP}, \cite[Sec.~1.3]{Fabian1997}, \cite[Thm.~6.3]{GM-rose}, respectively.
	\begin{f}
		Let $T\colon V \to W$ be an operator between Banach spaces.
		Then $T(B_{V})$ is relatively weakly compact/Asplund/tame if and only if $T$ factors through a reflexive/Asplund/Rosenthal Banach space.
	\end{f}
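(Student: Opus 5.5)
The ``if'' direction is formal, and I would do it first. Suppose $T = S \circ R$ with $R\colon V \to X$, $S\colon X \to W$ bounded and $X$ reflexive, Asplund or Rosenthal respectively. Then $T(B_V) \subseteq \lVert R\rVert \, S(B_X)$, and I argue according to the type of $X$:
\begin{itemize}
\item If $X$ is reflexive, $B_X$ is weakly compact. Its image $S(B_X)$ is then weakly compact, since $S$ is (weak, weak) continuous by Fact~\ref{fact:adjoint_is_weak_and_weak_star_cont}.
\item If $X$ is Asplund or Rosenthal, $B_X$ is an Asplund or tame subset by the characterization of these spaces through their unit balls stated above. Fact~\ref{fact:image_of_small_subset} transports this property to $S(B_X)$.
\end{itemize}
In all three cases the property is stable under scalar multiples and under passing to subsets, so $T(B_V)$ inherits it.

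For the ``only if'' direction I would use the Davis--Figiel--Johnson--Pe\l{}czy\'{n}ski interpolation construction. Let $K := \overline{\acx}\, T(B_V)$ be the closed absolutely convex hull. For each $n$ let $\lVert\cdot\rVert_n$ be the gauge of $U_n := 2^n K + 2^{-n} B_W$, which is a norm equivalent to the norm of $W$. Put
$$
Y := \Bigl\{ y \in W : \lVert y \rVert_Y := \Bigl(\sum_n \lVert y\rVert_n^2\Bigr)^{1/2} < \infty \Bigr\}.
$$
Then $Y$ is a Banach space, the inclusion $j\colon Y \to W$ is bounded, and $T(B_V) \subseteq K \subseteq c\, B_Y$ for some constant $c$. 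Hence $T$ factors as $V \to Y \xrightarrow{j} W$, and everything reduces to showing that $Y$ has the required property.

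The first step is to pass from $T(B_V)$ to $K$. In the weakly compact case this is Krein--\v{S}mulian together with Mazur's theorem (Fact~\ref{fact:mazur}). In the Asplund and tame cases I would invoke the known stability of these bornologies under closed absolutely convex hulls, for instance from \cite{TameLCS} or \cite{Fabian1997}.

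The second step is to show that $B_Y$ is almost contained in $K$: it lies in $U_n = 2^n K + 2^{-n} B_W$ for every $n$. I then transfer the property from $j(B_Y)$ back to $Y$ itself.
\begin{itemize}
\item In the reflexive case this is the classical DFJP argument. One identifies $Y^{**}$ with the elements of $W^{**}$ lying in $\bigcap_n \overline{U_n}^{w^*}$ with finite $\ell^2$-sum, and weak compactness of $K$ forces $Y^{**} = Y$.
\item In the tame case, take a sequence $(y_k)$ in $B_Y$. For each $n$, the $K$-components of $j(y_k)$ admit a weak-Cauchy subsequence. A diagonal argument combined with the $2^{-n}$ error terms, and with the description of $Y^{**}$ above, should give a subsequence that is weak-Cauchy in $Y$. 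This shows $Y$ is Rosenthal.
\item In the Asplund case I would aim to show that $B_Y$ is a fragmented family on $(B_{Y^*}, w^*)$. The plan is to pull fragmentability of $2^nK$ on $B_{W^*}$ back through $j^*$, using that $j^*(W^*)$ is norm dense in $Y^*$.
\end{itemize}

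I expect this last transfer, fragmentability in the Asplund case and weak-Cauchy subsequences in the tame case, to be the main obstacle. The difficulty is that the weak-star topology on $B_{Y^*}$ is genuinely finer than the one induced from $W^*$. For this step I would rely on the published arguments cited above (\cite[Sec.~1.3]{Fabian1997} and \cite[Thm.~6.3]{GM-rose}) rather than reprove them.
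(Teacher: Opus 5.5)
Your proposal is correct and follows the same route as the paper. The paper gives no argument of its own beyond invoking the Davis--Figiel--Johnson--Pe\l{}czy\'{n}ski interpolation space together with \cite[Cor.~3]{DFJP}, \cite[Sec.~1.3]{Fabian1997} and \cite[Thm.~6.3]{GM-rose}, and your explicit ``if'' direction via Fact~\ref{fact:image_of_small_subset} is a sound addition. One simplification: $T(B_V)$ is already absolutely convex, so $K$ is just its norm closure, and you only need stability under norm closure (immediate in all three cases) rather than Krein--\v{S}mulian or hull-stability results.
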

	As with many other important applications, this factorization turns out to be crucial for the results of this paper.
	Specifically, together with Fact \ref{lemma:adjoint_representation} we get the following important corollary.
	\begin{cor}\label{cor:reflexive_asplund_rosenthal_factorization}
		Let $\varphi \in \A^{*}$ be a right weakly almost periodic/Asplund/tame functional.
		Then $\Qr_{\varphi}^{(r)}$ factors through a reflexive/Asplund/Rosenthal Banach space $V$ respectively.
		Therefore, there exists bounded maps $T\colon \A\to V$ and $S\colon V \to \A^{*}$ such that $\Qr_{\varphi}^{(r)} = S\circ T$. 
		As a consequence
		$\Or_{\varphi}^{(l)} = T^{*}\circ S^{*}$ and the following diagram commutes.
		\[
		\begin{array}{cc}
			\begin{tikzcd}
				\A && {\A^{*}} \\
				& V
				\arrow["{\Qr_{\varphi}^{(r)}}", from=1-1, to=1-3]
				\arrow["T"', from=1-1, to=2-2]
				\arrow["S"', from=2-2, to=1-3]
			\end{tikzcd} &
			\begin{tikzcd}
				{\A^{*}} && {\A^{**}} \\
				& {V^{*}}
				\arrow["{\Or_{\varphi}^{(l)}}"', from=1-3, to=1-1]
				\arrow["{S^{*}}", from=1-3, to=2-2]
				\arrow["{T^{*}}", from=2-2, to=1-1]
			\end{tikzcd}
		\end{array}
		\]
		An analogous statement could be made about left weakly almost periodic/Asplund/tame functionals.
	\end{cor}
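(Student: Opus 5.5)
By the definition of a right weakly almost periodic/Asplund/tame functional, $\varphi \circledast B_{\A} = \Qr_{\varphi}^{(r)}(B_{\A})$ is relatively weakly compact/Asplund/tame in $\A^{*}$. In other words, the operator $\Qr_{\varphi}^{(r)}\colon \A \to \A^{*}$ is weakly compact/Asplund/tame. I would first record that $\Qr_{\varphi}^{(r)}$ is a bounded linear operator. Linearity in $a$ is immediate from $\langle \varphi \circledast a, b\rangle = \langle \varphi, ab\rangle$, and boundedness, with $\lVert \Qr_{\varphi}^{(r)}\rVert \le \lVert\varphi\rVert$, is Fact \ref{lemma:algebra_action_on_dual_is_norm_continuous}. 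The Davis--Figiel--Johnson--Pe\l{}czy\'{n}ski fact stated just before the corollary then gives a reflexive/Asplund/Rosenthal space $V$ and bounded operators $T\colon \A \to V$, $S\colon V\to\A^{*}$ with $\Qr_{\varphi}^{(r)} = S\circ T$. This settles the first two assertions.

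For the dual diagram, I would take adjoints: $(S\circ T)^{*} = T^{*}\circ S^{*}$ with $S^{*}\colon \A^{**}\to V^{*}$ and $T^{*}\colon V^{*}\to \A^{*}$. By Fact \ref{lemma:adjoint_representation}, $\left(\Qr_{\varphi}^{(r)}\right)^{*} = \Or_{\varphi}^{(l)}$, so $\Or_{\varphi}^{(l)} = T^{*}\circ S^{*}$ and the second triangle commutes. If one wants to avoid quoting the fact, the identity can be checked directly by pairing with $b\in\A$:
\[
\langle (\Qr_{\varphi}^{(r)})^{*}\mu, b\rangle = \langle \mu, \varphi\circledast b\rangle = \langle \mu\circledast\varphi, b\rangle .
\]

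For the left version, I would apply the same argument to $\Qr_{\varphi}^{(l)}$, whose image of $B_{\A}$ is $B_{\A}\circledast\varphi$. This gives $\Qr_{\varphi}^{(l)} = S\circ T$ and, by the other half of Fact \ref{lemma:adjoint_representation}, $\Or_{\varphi}^{(r)} = T^{*}\circ S^{*}$.

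None of these steps is a real obstacle. The only points needing care are to confirm that the adjoint identification of Fact \ref{lemma:adjoint_representation} holds on all of $\A^{**}$, which the displayed computation shows, and to note that in the weakly almost periodic case right and left are interchangeable, so the relevant orbit is relatively weakly compact either way.
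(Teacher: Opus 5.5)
Your proposal is correct and follows the paper's route: the paper derives this corollary from the Davis--Figiel--Johnson--Pe\l{}czy\'{n}ski factorization fact applied to the bounded operator $\Qr_{\varphi}^{(r)}$, whose image of $B_{\A}$ is $\varphi\circledast B_{\A}$, and then takes adjoints via Fact~\ref{lemma:adjoint_representation}, whose identity your pairing computation verifies directly. Your closing remark on interchanging left and right in the WAP case is not needed. Read ``right WAP'' directly as relative weak compactness of $\varphi\circledast B_{\A}$; if you instead justified the interchange through Proposition~\ref{prop:wap_equivalence}, you would be leaning on a result whose own proof uses this corollary.
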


	\section{Weakly Almost Periodic Functionals}\label{section:wap_functionals}
	This section is a summary of many known results about weakly almost periodic functionals.
	In the next section we will use a similar analysis to give a detailed exploration of the properties of Asplund and tame functionals.
	\begin{f}[Grothendieck's criterion] \cite[Thm.~17.12]{KellyNamioka}\label{fact:grothendiecks_criterion}
		Let $V$ be a Banach space and $C \subseteq V$ be a bounded subset.
		Then $C$ is relatively weakly compact if and only if 
		$$
		\lim\limits_{n \in \N} \lim\limits_{m \in \N} \langle \varphi_{n}, x_{m}\rangle = \lim\limits_{m \in \N} \lim\limits_{n \in \N} \langle \varphi_{n}, x_{m}\rangle
		$$
		whenever $\{x_{m}\}_{m \in \N} \subseteq C$ and $\{\varphi_{n}\}_{n \in \N} \subseteq B_{V^{*}}$ such that both limits exist.
	\end{f}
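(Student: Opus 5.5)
The plan is to work inside the bidual: identify $C$ with $J(C)\subseteq V^{**}$ and let $K:=\overline{J(C)}^{w^{*}}$. Since $C$ is bounded, $K$ is weak-star compact by Banach–Alaoglu. The weak topology of $V$ is exactly the restriction of the weak-star topology of $V^{**}$ to $J(V)$. Hence $C$ is relatively weakly compact if and only if $K\subseteq J(V)$. Both directions will be argued through this reformulation.

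\emph{Necessity.} Assume $C$ is relatively weakly compact, and let $\{x_m\}\subseteq C$ and $\{\varphi_n\}\subseteq B_{V^{*}}$ be such that both iterated limits exist.
\begin{enumerate}
\item Choose a weak cluster point $x\in V$ of $\{x_m\}$. Since $\lim_m\langle\varphi_n,x_m\rangle$ exists, it must equal $\langle\varphi_n,x\rangle$. Therefore $\lim_n\lim_m\langle\varphi_n,x_m\rangle=\lim_n\langle\varphi_n,x\rangle$.
\item Choose a weak-star cluster point $\varphi\in B_{V^{*}}$ of $\{\varphi_n\}$. In the same way, $\lim_n\langle\varphi_n,x_m\rangle=\langle\varphi,x_m\rangle$ for every $m$.
\item The outer limit $\lim_m\langle\varphi,x_m\rangle$ exists and $x$ is a weak cluster point of $\{x_m\}$, so this limit equals $\langle\varphi,x\rangle$.
\item Finally, $\lim_n\langle\varphi_n,x\rangle$ exists by step 1 and $\varphi$ is a weak-star cluster point of $\{\varphi_n\}$, so $\lim_n\langle\varphi_n,x\rangle=\langle\varphi,x\rangle$.
\end{enumerate}
Thus both iterated limits equal $\langle\varphi,x\rangle$. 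This direction does not need Eberlein–Šmulian.

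\emph{Sufficiency.} Argue by contraposition: suppose some $\xi\in K\setminus J(V)$. Because $J(V)$ is norm closed, $d:=\mathrm{dist}(\xi,J(V))>0$. Hahn–Banach gives $\eta\in B_{V^{***}}$ with $\eta|_{J(V)}=0$ and $\eta(\xi)=d$. Fix $0<\theta<d$. Build $\varphi_n\in B_{V^{*}}$ and $x_n\in C$ inductively, alternating as follows.
\begin{enumerate}
\item At stage $n$, use Goldstine's theorem (Fact \ref{fact:goldstines}) for $V^{*}$: $\eta$ is a weak-star limit of elements of $B_{V^{*}}$, tested against the finitely many vectors $\xi,J(x_1),\dots,J(x_{n-1})\in V^{**}$. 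This yields $\varphi_n\in B_{V^{*}}$ with $\langle\xi,\varphi_n\rangle>\theta$ and $|\langle\varphi_n,x_k\rangle|<1/n$ for all $k<n$.
\item Then use $\xi\in\overline{J(C)}^{w^{*}}$ to choose $x_n\in C$ with $|\langle\varphi_k,x_n\rangle-\langle\xi,\varphi_k\rangle|<1/n$ for all $k\le n$.
\end{enumerate}
The construction gives two limits. For each fixed $n$, $\lim_m\langle\varphi_n,x_m\rangle=\langle\xi,\varphi_n\rangle>\theta$. For each fixed $m$, $\lim_n\langle\varphi_n,x_m\rangle=0$.

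It remains to arrange that both iterated limits exist. Pass to a subsequence in $n$ along which the bounded numbers $\langle\xi,\varphi_n\rangle$ converge. Passing to subsequences in $m$ changes none of the inner limits. Then the first iterated limit is $\ge\theta$, while the second is $0$, contradicting the hypothesis.

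The main obstacle is the interlaced inductive construction. The key point is that each $\varphi_n$ must be chosen \emph{after} the earlier $x_k$, using only finitely many weak-star conditions. Symmetrically, each $x_n$ must approximate $\xi$ on the finitely many $\varphi_k$ already chosen. The functional $\eta$ vanishing on $J(V)$ is exactly what makes $\varphi_n$ small on all previously chosen $x_k$ while remaining large at $\xi$.
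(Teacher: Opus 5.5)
Your proof is correct. The paper gives no argument of its own here: the criterion is quoted from Kelley--Namioka and used as a black box in the final step of the proof of Proposition~\ref{prop:wap_equivalence}. Your proposal therefore supplies a self-contained proof of something the paper simply imports.

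The usual textbook route works in function spaces. It regards $C$ as a bounded family of weak-star continuous functions on the compact space $(B_{V^{*}},w^{*})$. Grothendieck's double-limit theorem for relative compactness in the topology of pointwise convergence then shows that every $\xi$ in the weak-star closure of $J(C)$ is weak-star continuous on $B_{V^{*}}$. Finally, the Krein--\v{S}mulian (or Banach--Dieudonn\'e) theorem gives $\xi\in J(V)$.

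Your argument replaces both of these ingredients with a direct Banach-space construction. The functional $\eta\in V^{***}$ annihilating $J(V)$ with $\eta(\xi)=\mathrm{dist}(\xi,J(V))$, combined with Goldstine's theorem for $V^{*}$, lets you choose $\varphi_1,x_1,\varphi_2,x_2,\dots$ by finitely many weak-star conditions at each stage. This produces iterated limits $\ge\theta$ and $0$. Your necessity argument via cluster points also correctly avoids Eberlein--\v{S}mulian.

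The function-space route buys generality: it works for families of continuous functions on arbitrary compact (even countably compact) spaces, and yields Eberlein-type theorems at the same time. Yours is more elementary, but it relies on the norm distance to $J(V)$ and so is specific to Banach spaces, which is exactly the setting the paper needs.
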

	The following theorem is an amalgamation of many known results.
	Most of them could be reconstructed from a global version (treating Arens regularity rather than individual weakly almost periodic functionals) found in \cite[Thm.~1.4.11]{palmer}.
	A proof for the equivalent of \ref{prop:wap_equivalence:commutator} and \ref{prop:wap_equivalence:left_weakly_compact} can be found in \cite[Thm.~4.2]{pymConvFunc}.
	We include the proofs for the convenience of the reader.
	\begin{prop}\label{prop:wap_equivalence}
		Let $\varphi \in \A^{*}$ be some functional, then the following are equivalent:
		\ben
		\item $\varphi$ is weakly almost periodic (meaning that $B_{\A} \circledast \varphi$ is relatively weakly compact).
		\label{prop:wap_equivalence:left_weakly_compact}
		\item $\varphi \circledast B_{\A}$ is relatively weakly compact.
		\label{prop:wap_equivalence:right_weakly_compact}
		\item $B_{\A^{**}} \circledast \varphi$ is weakly compact.
		\label{prop:wap_equivalence:left_arens_orbit_weakly_compact}
		\item $\varphi \circledast B_{\A^{**}}$ is weakly compact.
		\label{prop:wap_equivalence:right_arens_orbit_weakly_compact}
		\item The orbit map $\Or_{\varphi}^{(l)}$ is (weak-star, weak) continuous.
		\label{prop:wap_equivalence:left_orbit_weak_continuous}
		\item The orbit map $\Or_{\varphi}^{(r)}$ is (weak-star, weak) continuous.
		\label{prop:wap_equivalence:right_orbit_weak_continuous}
		\item If $\{\mu_{\lambda} \}_{\lambda \in \Lambda} \subseteq \A^{**}$ converges to $\mu \in \A^{**}$ in the weak-star topology, then for every $\nu \in \A^{**}$:
		$$
		\lim\limits_{\lambda \in \Lambda} \langle \nu \arI \mu_{\lambda}, \varphi\rangle = \langle \nu \arI \mu, \varphi\rangle.
		$$
		In other words, $\arI_{\nu}^{(r)}$ is (weak-star, $\rho_{\varphi}$) continuous.
		\label{prop:wap_equivalence:first_arens_continuity}
		\item If $\{\mu_{\lambda} \}_{\lambda \in \Lambda} \subseteq \A^{**}$ converges to $\mu \in \A^{**}$ in the weak-star topology, then for every $\nu \in \A^{**}$:
		$$
		\lim\limits_{\lambda \in \Lambda} \langle \mu_{\lambda} \arII \nu, \varphi\rangle = \langle \mu \arII \nu, \varphi\rangle.
		$$
		In other words, $\arII_{\nu}^{(l)}$ is (weak-star, $\rho_{\varphi}$) continuous.
		\label{prop:wap_equivalence:second_arens_continuity}
		\item For every $\mu, \nu \in \A^{**}$:
		$$
		\langle \mu \arI \nu - \mu \arII \nu, \varphi\rangle = 0.
		$$
		\label{prop:wap_equivalence:commutator}
		\item For every $\{a_{n}\}_{n \in \N} \subseteq B_{\A}$ and $\{\mu_{m}\}_{m \in\N} \subseteq B_{\A^{**}}$ we have:
		$$
		\lim\limits_{n \in \N}\lim\limits_{m \in \N} \langle \mu_{m}, a_{n} \circledast \varphi\rangle = 
		\lim\limits_{m \in \N}\lim\limits_{n \in \N} \langle \mu_{m}, a_{n} \circledast \varphi\rangle
		$$
		whenever both limits exist.
		\label{prop:wap_equivalence:grothendiecks}
		\een
	\end{prop}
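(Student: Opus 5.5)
We will establish the ten conditions through two chains of equivalences that meet at the commutator condition (9). The first chain is the ``left'' one, (1)$\Leftrightarrow$(3)$\Leftrightarrow$(5)$\Leftrightarrow$(8); the second is the ``right'' one, (2)$\Leftrightarrow$(4)$\Leftrightarrow$(6)$\Leftrightarrow$(7). Grothendieck's criterion will close (10) onto (1).

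\textbf{Orbit conditions.} For (1)$\Leftrightarrow$(3), we use Corollary~\ref{cor:dual_ball_action_is_weak_star_closure}, which gives $B_{\A^{**}}\circledast\varphi=\overline{B_\A\circledast\varphi}^{w^*}$.

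If $B_\A\circledast\varphi$ is relatively weakly compact, its weak closure $K$ is weakly compact. Then $K$ is weak-star compact, hence weak-star closed, and it contains the weak-star closure; so (3) holds. Conversely, (3) trivially gives (1).

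For (3)$\Rightarrow$(5), note that $\Or^{(l)}_\varphi$ is (weak-star, weak-star) continuous by Lemma~\ref{lemma:bidual_action_is_orbit_weak_star_continuous}. On the weakly compact set $B_{\A^{**}}\circledast\varphi$ the weak and weak-star topologies coincide, since the identity from a compact space onto a Hausdorff space is a homeomorphism. Hence $\Or^{(l)}_\varphi$ is (weak-star, weak) continuous on $B_{\A^{**}}$. This extends to all of $\A^{**}$ by scaling and linearity, understanding continuity on bounded sets, and using Krein--Šmulian / the fact that a linear map is weak-star continuous iff it is so on the ball. Conversely, (5)$\Rightarrow$(3) holds because the continuous image of the weak-star compact ball is weakly compact.

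For (5)$\Leftrightarrow$(8), we compute
\[
\langle\mu_\lambda\arII\nu,\varphi\rangle=\langle\nu,\varphi\circledast\mu_\lambda\rangle .
\]
Thus (8) states exactly that $\mu\mapsto\varphi\circledast\mu$ is (weak-star, weak) continuous, which is (6) rather than (5). We therefore pair (8) with (6) and (7) with (5): for (7), $\langle\nu\arI\mu_\lambda,\varphi\rangle=\langle\nu,\mu_\lambda\circledast\varphi\rangle$. The right chain (2)$\Leftrightarrow$(4)$\Leftrightarrow$(6) is symmetric to the left one.

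\textbf{Linking the two sides via (9).} We next show (5)$\Rightarrow$(9). Fix $\nu$ and take a net $a_\lambda\to\mu$ weak-star in $\A$, by Goldstine. Then
\[
\langle\mu\arI\nu,\varphi\rangle=\langle\mu,\nu\circledast\varphi\rangle=\lim_\lambda\langle\nu\circledast\varphi,a_\lambda\rangle=\lim_\lambda\langle\nu,\varphi\circledast a_\lambda\rangle .
\]
On the other hand,
\[
\langle\mu\arII\nu,\varphi\rangle=\langle\nu,\varphi\circledast\mu\rangle .
\]
So (9) is equivalent to $\langle\nu,\varphi\circledast a_\lambda\rangle\to\langle\nu,\varphi\circledast\mu\rangle$ for every $\nu$, i.e.\ to (6) along such nets. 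Arguing symmetrically, with nets $b_\lambda\to\nu$ inside $\nu$, (9) is likewise equivalent to (5). More precisely, (9) for all $\mu,\nu$ gives (weak-star, weak) continuity of $\Or^{(r)}_\varphi$ at every point along nets from $\A$. To upgrade this to arbitrary nets in $B_{\A^{**}}$, we use a compactness argument: any cluster point of $\varphi\circledast\mu_\lambda$ in $(\A^{*},\sigma(\A^*,\A^{**}))$ must agree with $\varphi\circledast\mu$. It is cleaner, however, to deduce (4) directly. By (9), each weak-star cluster functional in $B_{\A^{***}}$ of $\varphi\circledast B_\A$ is in fact given by $\varphi\circledast\mu\in\A^*$, so the $\sigma(\A^{***},\A^{**})$-closure of $\varphi\circledast B_\A$ lies in $\A^*$. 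This is the standard characterization of relative weak compactness. Hence (9)$\Rightarrow$(2), and symmetrically (9)$\Rightarrow$(1). Combined with (5)$\Rightarrow$(9) and (6)$\Rightarrow$(9), all of (1)--(9) are equivalent.

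\textbf{Grothendieck condition.} Finally, (1)$\Leftrightarrow$(10) is Fact~\ref{fact:grothendiecks_criterion} applied to $C=B_\A\circledast\varphi\subseteq V=\A^*$, with $V^*=\A^{**}$.

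\textbf{Main obstacle.} The hard step is the implication (9)$\Rightarrow$ weak compactness. The key is to identify $\sigma(\A^{***},\A^{**})$-limits of $\varphi\circledast a_\lambda$ with $\varphi\circledast\mu$, using iterated limits over two nets. I would handle this via the (Goldstine-plus-double-limit) argument: the difference of the two Arens products measures exactly the failure of the iterated limits to commute, which by Grothendieck's criterion is precisely the failure of relative weak compactness.
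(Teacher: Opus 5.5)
Your proof is correct, but it takes a different route from the paper's.

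\textbf{The paper's route.} It crosses from the left side to the right side at the outset. It proves (1)$\Rightarrow$(6) and (2)$\Rightarrow$(5) by factoring $\Qr_{\varphi}^{(l)}$ (resp.\ $\Qr_{\varphi}^{(r)}$) through a reflexive space (Corollary~\ref{cor:reflexive_asplund_rosenthal_factorization}). The orbit map then becomes $T^{*}\circ S^{*}$ and is globally (weak-star, weak) continuous. The remaining links are (5)$\Rightarrow$(3)$\Rightarrow$(1) and (5)$\Rightarrow$(7)$\Rightarrow$(9), together with their mirror images. The cycle closes with (9)$\Rightarrow$(10)$\Rightarrow$(1), via an explicit iterated-limit computation along a common subnet followed by the ``if'' half of Grothendieck's criterion.

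\textbf{Your route.} You avoid the DFJP factorization entirely.
\begin{itemize}
\item (1)$\Rightarrow$(3) comes from Corollary~\ref{cor:dual_ball_action_is_weak_star_closure} and weak-star closedness of weakly compact sets. The paper only uses this argument afterwards, in Corollary~\ref{cor:weak_star_orbit_is_the_same_for_wap}.
\item (3)$\Rightarrow$(5) comes from the coincidence of the weak and weak-star topologies on a weakly compact set. Banach--Dieudonn\'e (Krein--\v{S}mulian) then passes from $B_{\A^{**}}$ to $\A^{**}$.
\item The only left--right bridge is the commutator (9).
\end{itemize}

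Your (9)$\Rightarrow$(2) is sound. Suppose $\varphi\circledast a_{\lambda}\to\Phi$ in $\sigma(\A^{***},\A^{**})$. By Banach--Alaoglu, pass to a subnet with $a_{\lambda}\to\mu$ weak-star. Then $\Phi(\nu)=\langle\mu\arI\nu,\varphi\rangle=\langle\mu\arII\nu,\varphi\rangle=\langle\nu,\varphi\circledast\mu\rangle$. Hence the bidual closure of $\varphi\circledast B_{\A}$ lies in $\A^{*}$, which gives relative weak compactness.

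\textbf{Comparison.} Your step is more elementary than the paper's (9)$\Rightarrow$(10) computation. Your observation that (5)$\Leftrightarrow$(7) and (6)$\Leftrightarrow$(8) are literal identities is also sharper than the paper's one-way arrows. The costs are that you need Krein--\v{S}mulian where the paper gets global continuity for free from the factorization. Also, (10) is attached to (1) by both directions of Grothendieck's criterion rather than sitting inside the cycle. The paper's factorization route has the advantage of being the template reused for the Rosenthal/tame case in Section~\ref{section:asplund_and_tame_functionals}.

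\textbf{Fixes for the write-up.}
\begin{itemize}
\item State the Alaoglu subnet step explicitly.
\item Drop the abandoned ``cluster point in $(\A^{*},\sigma(\A^{*},\A^{**}))$'' sketch, since it presupposes the compactness being proved.
\item Fix the initial mis-pairing of (7) and (8) with the two chains.
\end{itemize}
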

	\begin{proof}
		The proof will follow the following scheme:
		$$\begin{tikzcd}
			\ref{prop:wap_equivalence:left_weakly_compact}& \ref{prop:wap_equivalence:left_arens_orbit_weakly_compact} & \ref{prop:wap_equivalence:left_orbit_weak_continuous} & \ref{prop:wap_equivalence:first_arens_continuity} \\
			\ref{prop:wap_equivalence:grothendiecks} &&&& \ref{prop:wap_equivalence:commutator} \\
			\ref{prop:wap_equivalence:right_weakly_compact} & \ref{prop:wap_equivalence:right_arens_orbit_weakly_compact} & \ref{prop:wap_equivalence:right_orbit_weak_continuous} & \ref{prop:wap_equivalence:second_arens_continuity} \\
			\arrow[from=1-1, to=3-3]
			\arrow[from=1-2, to=1-1]
			\arrow[from=1-3, to=1-2]
			\arrow[from=1-3, to=1-4]
			\arrow[from=1-4, to=2-5]
			\arrow[from=2-1, to=1-1]
			\arrow[from=2-5, to=2-1]
			\arrow[from=3-1, to=1-3]
			\arrow[from=3-2, to=3-1]
			\arrow[from=3-3, to=3-2]
			\arrow[from=3-3, to=3-4]
			\arrow[from=3-4, to=2-5]
		\end{tikzcd}
		$$
		\ben
		\item[\ref{prop:wap_equivalence:left_weakly_compact} $\Rightarrow$ \ref{prop:wap_equivalence:right_orbit_weak_continuous}] 
		By definition, $\Qr_{\varphi}^{(l)}(B_{\A})$ is relatively weakly compact.
		Applying Corollary \ref{cor:reflexive_asplund_rosenthal_factorization}, we can find a reflexive space $V$ and bounded maps $T\colon \A \to V$ and $S \colon V\to \A^{*}$ such that $\Qr_{\varphi}^{(l)} = S \circ T$ and $\Or_{\varphi}^{(r)} = T^{*} \circ S^{*}$.
		By Fact \ref{fact:adjoint_is_weak_and_weak_star_cont}, $S^{*}$ is (weak-star, weak-star) continuous, and $T^{*}$ is (weak, weak) continuous.
		Moreover, the weak and weak-star topologies on $V^{*}$ are the same because $V$ is reflexive.
		Therefore, $T^{*}$ is (weak-star, weak) continuous.
		Together we conclude that $\Or_{\varphi}^{(r)}$ is (weak-star, weak) continuous.
		\item[\ref{prop:wap_equivalence:right_weakly_compact} $\Rightarrow$ \ref{prop:wap_equivalence:left_orbit_weak_continuous}] Proven analogously.
		\item[\ref{prop:wap_equivalence:left_arens_orbit_weakly_compact} $\Rightarrow$ \ref{prop:wap_equivalence:left_weakly_compact}] Trivial.
		\item[\ref{prop:wap_equivalence:right_arens_orbit_weakly_compact} $\Rightarrow$ \ref{prop:wap_equivalence:right_weakly_compact}] Trivial.
		\item[\ref{prop:wap_equivalence:left_orbit_weak_continuous} $\Rightarrow$ \ref{prop:wap_equivalence:left_arens_orbit_weakly_compact}] If $\Or_{\varphi}^{(l)}$ is (weak-star, weak) continuous then $B_{\A^{**}}$ being weak-star compact (Banach–Alaoglu theorem) implies that $\Or_{\varphi}^{(l)}(B_{\A^{**}}) \subseteq \A^{*}$ is weakly compact.
		However, note that:
		$$
		B_{\A^{**}} \circledast \varphi = \Or_{\varphi}^{(l)}(B_{\A^{**}}).
		$$
		\item[\ref{prop:wap_equivalence:right_orbit_weak_continuous} $\Rightarrow$ \ref{prop:wap_equivalence:right_arens_orbit_weakly_compact}] Proven analogously.
		\item[\ref{prop:wap_equivalence:left_orbit_weak_continuous} $\Rightarrow$ \ref{prop:wap_equivalence:first_arens_continuity}] Suppose that $\Or_{\varphi}^{(l)}$ is (weak-star, weak) continuous and $\{\mu_{\lambda}\}_{\lambda \in \Lambda} \subseteq \A^{**}$ converges in the weak-star topology to $\mu \in \A^{**}$.
		Then for every $\nu \in \A^{**}$:
		$$
		\lim\limits_{\lambda \in \Lambda} \langle \nu \arI \mu_{\lambda}, \varphi\rangle =
		\lim\limits_{\lambda \in \Lambda} \langle \nu, \mu_{\lambda} \circledast \varphi\rangle =
		\lim\limits_{\lambda \in \Lambda} \langle \nu, \Or_{\varphi}^{(l)}(\mu_{\lambda})\rangle =
		\langle \nu, \Or_{\varphi}^{(l)}(\mu)\rangle = 
		\langle \nu \arI \mu, \varphi\rangle.
		$$
		\item[\ref{prop:wap_equivalence:right_orbit_weak_continuous} $\Rightarrow$ \ref{prop:wap_equivalence:second_arens_continuity}] Proven analogously.
		\item[\ref{prop:wap_equivalence:first_arens_continuity} $\Rightarrow$ \ref{prop:wap_equivalence:commutator}] Suppose that $\nu, \mu \in \A^{**}$.
		Applying Goldstine's theorem (Fact \ref{fact:goldstines}), we can find a net $\{a_{\lambda} \}_{\lambda \in \Lambda} \subseteq \A$ which converges to $\nu$ in the weak-star topology.
		Now we have:
		\begin{align*}
			\langle \mu \arI \nu - \mu \arII \nu, \varphi\rangle & \underset{\ref{lemma:bidual_ball_is_left_right_topological_semigroup}}{=}
			\lim\limits_{\lambda \in \Lambda} \langle \mu \arI \nu - \mu \arII a_{\lambda}, \varphi\rangle\\
			& \underset{\ref{prop:wap_equivalence:first_arens_continuity}}{=}
			\lim\limits_{\lambda \in \Lambda} \langle \mu \arI a_{\lambda} - \mu \arII a_{\lambda}, \varphi\rangle\\
			& \underset{\ref{lemma:arens_properties}}{=} \lim\limits_{\lambda \in \Lambda} \langle \mu a_{\lambda} - \mu a_{\lambda}, \varphi\rangle = 0.
		\end{align*}
		\item[\ref{prop:wap_equivalence:second_arens_continuity} $\Rightarrow$ \ref{prop:wap_equivalence:commutator}] Proven analogously.
		\item[\ref{prop:wap_equivalence:commutator} $\Rightarrow$ \ref{prop:wap_equivalence:grothendiecks}] Let $\{a_{n}\}_{n \in \N} \subseteq B_{\A}$ and $\{ \mu_{m}\}_{m \in \N} \subseteq B_{\A^{**}}$ such that the limits
		$$
		\lim\limits_{n \in \N}\lim\limits_{m \in \N} \langle \mu_{m}, a_{n} \circledast \varphi\rangle \text{ and }
		\lim\limits_{m \in \N}\lim\limits_{n \in \N} \langle \mu_{m}, a_{n} \circledast \varphi\rangle
		$$
		exist.
		Applying the Banach–Alaoglu theorem, we can find a subnet $\{n(\lambda)\}_{\lambda \in \Lambda} \subseteq \N$ such that $\{\mu_{n(\lambda)}\}_{\lambda \in \Lambda}$ and $\{a_{n(\lambda)}\}_{\lambda \in \Lambda}$ converges in the weak-star topology to $\mu, \nu \in B_{\A^{**}}$ respectively.
		We now have:
		\begin{align*}
			\lim\limits_{n \in \N}\lim\limits_{m \in \N} \langle \mu_{m}, a_{n} \circledast \varphi\rangle & = 
			\lim\limits_{\lambda \in \Lambda}\lim\limits_{\kappa \in \Lambda} \langle \mu_{n(\kappa)}, a_{n(\lambda)} \circledast \varphi\rangle \\
			& = \lim\limits_{\lambda \in \Lambda}\lim\limits_{\kappa \in \Lambda} \langle \mu_{n(\kappa)} \arI a_{n(\lambda)}, \varphi\rangle \\
			& \underset{\ref{lemma:bidual_ball_is_left_right_topological_semigroup}}{=}
			\lim\limits_{\lambda \in \Lambda} \langle \mu \arI a_{n(\lambda)}, \varphi\rangle \\
			& \underset{\ref{lemma:arens_properties}}{=}
			\lim\limits_{\lambda \in \Lambda} \langle \mu \arII a_{n(\lambda)}, \varphi\rangle \\
			& \underset{\ref{lemma:bidual_ball_is_left_right_topological_semigroup}}{=} 
			\langle \mu \arII \nu, \varphi\rangle,
		\end{align*}
		\begin{align*}
			\lim\limits_{m \in \N}\lim\limits_{n \in \N} \langle \mu_{m}, a_{n} \circledast \varphi\rangle & =
			\lim\limits_{\kappa \in \Lambda}\lim\limits_{\lambda \in \Lambda} \langle \mu_{n(\kappa)}, a_{n(\lambda)} \circledast \varphi\rangle \\
			& = \lim\limits_{\kappa \in \Lambda}\lim\limits_{\lambda \in \Lambda} \langle \mu_{n(\kappa)} \arI a_{n(\lambda)}, \varphi\rangle \\
			& \underset{\ref{lemma:arens_properties}}{=} \lim\limits_{\kappa \in \Lambda}\lim\limits_{\lambda \in \Lambda} \langle \mu_{n(\kappa)} \arII a_{n(\lambda)}, \varphi\rangle \\
			& \underset{\ref{lemma:bidual_ball_is_left_right_topological_semigroup}}{=}
			\lim\limits_{\kappa \in \Lambda}\langle \mu_{n(\kappa)} \arII \nu, \varphi\rangle \\
			& \underset{\ref{prop:wap_equivalence:commutator}}{=}
			\lim\limits_{\kappa \in \Lambda}\langle \mu_{n(\kappa)} \arI \nu, \varphi\rangle \\
			& \underset{\ref{lemma:bidual_ball_is_left_right_topological_semigroup}}{=}
			\langle \mu \arI \nu, \varphi\rangle\\
			& \underset{\ref{prop:wap_equivalence:commutator}}{=} 
			\langle \mu \arII \nu, \varphi\rangle \\
			& = \lim\limits_{n \in \N}\lim\limits_{m \in \N} \langle \mu_{m}, a_{n} \circledast \varphi\rangle.
		\end{align*}
		\item[\ref{prop:wap_equivalence:grothendiecks} $\Rightarrow$ \ref{prop:wap_equivalence:left_weakly_compact}] Follows from Grothendieck's criterion for relative weak compactness (Fact \ref{fact:grothendiecks_criterion}).
		\een
	\end{proof}
	
	\begin{cor} \label{cor:weak_star_orbit_is_the_same_for_wap}
		If $\varphi \in \A^{*}$ is weakly almost periodic, then:
		$$
		B_{\A^{**}} \circledast \varphi = \overline{B_{\A} \circledast \varphi} \text{ and }
		\varphi \circledast B_{\A^{**}} = \overline{\varphi \circledast B_{\A}}.
		$$
	\end{cor}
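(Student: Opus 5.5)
We prove the claim for the left orbit; the right orbit is handled by the same argument with the roles of $\Or_{\varphi}^{(l)}$ and $\Or_{\varphi}^{(r)}$ interchanged.

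The plan is to combine Corollary~\ref{cor:dual_ball_action_is_weak_star_closure} with Proposition~\ref{prop:wap_equivalence} and Mazur's theorem (Fact~\ref{fact:mazur}). By Corollary~\ref{cor:dual_ball_action_is_weak_star_closure} we already have $B_{\A^{**}} \circledast \varphi = \overline{B_{\A} \circledast \varphi}^{w^{*}}$. It therefore suffices to show that, for weakly almost periodic $\varphi$, the weak-star closure of $B_{\A} \circledast \varphi$ in $\A^{*}$ coincides with its norm closure.

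\textbf{Weak-star closure equals weak closure.} By Proposition~\ref{prop:wap_equivalence}, item~\ref{prop:wap_equivalence:left_arens_orbit_weakly_compact}, the set $B_{\A^{**}} \circledast \varphi$ is weakly compact. In particular it is weakly closed, and it contains $B_{\A} \circledast \varphi$. Hence
$$
\overline{B_{\A} \circledast \varphi}^{w} \subseteq B_{\A^{**}} \circledast \varphi = \overline{B_{\A} \circledast \varphi}^{w^{*}}.
$$
The weak topology on $\A^{*}$ is finer than the weak-star topology, so the reverse inclusion holds as well. Thus the weak-star closure equals the weak closure.

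\textbf{Weak closure equals norm closure.} Here we use convexity. The ball $B_{\A}$ is convex and $a \mapsto a \circledast \varphi$ is linear, so $B_{\A} \circledast \varphi$ is a convex subset of $\A^{*}$. Mazur's theorem then gives $\overline{B_{\A} \circledast \varphi}^{w} = \overline{B_{\A} \circledast \varphi}$.

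Chaining the equalities yields $B_{\A^{**}} \circledast \varphi = \overline{B_{\A} \circledast \varphi}$.

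The only delicate point is the step invoking Mazur's theorem: it needs the observation that the orbit of the unit ball is convex, which follows from the linearity of the orbit map. Once that is noted, the rest is bookkeeping of closures.
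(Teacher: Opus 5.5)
There is a genuine gap in your step ``weak-star closure equals weak closure'': you have the inclusions backwards. Write $C := B_{\A} \circledast \varphi$. Because the weak topology on $\A^{*}$ is finer than the weak-star topology, it has more closed sets and therefore gives the \emph{smaller} closure, $\overline{C}^{w} \subseteq \overline{C}^{w^{*}}$. That is the trivial inclusion, and it is the one you derived from the weak closedness of $B_{\A^{**}} \circledast \varphi$. The inclusion you actually need is $\overline{C}^{w^{*}} \subseteq \overline{C}^{w}$. It does not follow from comparing the two topologies, so as written it is unproved.

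Weak closedness of $B_{\A^{**}} \circledast \varphi$ alone cannot supply it either. For example, in $\ell^{1} = c_{0}^{*}$ the set $\{e_{n}\}$ has weak-star closure $\{e_{n}\} \cup \{0\}$, which is weakly closed. Its weak closure is still just $\{e_{n}\}$, since $\langle \cdot, (1,1,\dots)\rangle > 1/2$ is a weak neighbourhood of every $e_{n}$ that misses $0$. You discarded the compactness, and compactness is what transfers closedness from the finer to the coarser topology.

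The fix, which is exactly the paper's argument, goes as follows. $\overline{C}^{w}$ is weakly compact, either by the definition of weak almost periodicity or as a weakly closed subset of the weakly compact set $B_{\A^{**}} \circledast \varphi$. Since the identity $(\A^{*}, w) \to (\A^{*}, w^{*})$ is continuous, $\overline{C}^{w}$ is weak-star compact. Since the weak-star topology is Hausdorff, it is weak-star closed, and hence $\overline{C}^{w^{*}} \subseteq \overline{C}^{w}$. Equivalently, on the weakly compact set $B_{\A^{**}} \circledast \varphi$ the two topologies coincide, because a continuous bijection from a compact space onto a Hausdorff space is a homeomorphism.

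With this inserted, the rest of your proof is correct and follows the paper's route: Corollary~\ref{cor:dual_ball_action_is_weak_star_closure}, then Mazur's theorem applied to the convex set $C$. The delicate point is this topological comparison, not the Mazur step.
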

	\begin{proof}
		Using the previous theorem, we conclude that $\overline{B_{\A} \circledast \varphi}^{w}$ is weakly compact, hence weak-star compact and thus weak-star closed.
		In other words, we have:
		$$
		\overline{B_{\A} \circledast \varphi}^{w} = \overline{B_{\A} \circledast \varphi}^{w^{*}} 
		\underset{\ref{cor:dual_ball_action_is_weak_star_closure}}{=} B_{\A^{**}} \circledast \varphi.
		$$ 
		Also, $B_{\A} \circledast \varphi$ is convex, so Mazur's theorem (Fact \ref{fact:mazur}) implies that $\overline{B_{\A} \circledast \varphi} = \overline{B_{\A} \circledast \varphi}^{w}$.
		Thus:
		$$
		\overline{B_{\A} \circledast \varphi} = \overline{B_{\A} \circledast \varphi}^{w} = B_{\A^{**}} \circledast \varphi.
		$$
		The proof of the analogous right version is similar.
	\end{proof}
	\section{Asplund and Tame Functionals} \label{section:asplund_and_tame_functionals}

	\begin{defin}
		Let $C \subseteq \A^{*}$ be some bounded subset.
		We define the $C$-seminorm $\lVert \cdot \rVert_{C}$ on $\A^{**}$ via:
		$$
		\forall \mu \in \A^{**}: \lVert \mu \rVert_{C} := \sup\limits_{\varphi \in C} \lvert \langle \mu, \varphi\rangle\rvert.
		$$
		Also, if $\varphi \in \A^{*}$, we define the induced left and right seminorms via:
		$$
		\lVert \cdot \rVert_{\varphi}^{(l)} := \lVert \cdot \rVert_{B_{\A}\circledast \varphi},\ 
		\lVert \cdot \rVert_{\varphi}^{(r)} := \lVert \cdot \rVert_{\varphi \circledast B_{\A}}. 
		$$
		Note that $B_{\A} \circledast \varphi$ and $\varphi \circledast B_{\A}$ are indeed bounded in virtue of Fact \ref{fact:bidual_action_properties}.
	\end{defin}
	
	\begin{lemma} \label{lemma:orbit_maps_are_equivalent_to_uniform_convergence}
		The left orbit map $\Or_{\varphi}^{(l)}\colon (\A^{**}, \lVert \cdot \rVert_{\varphi}^{(r)}) \to (\A^{*}, \lVert \cdot \rVert)$ is an isometry of seminorms.
		The right orbit map $\Or_{\varphi}^{(r)}\colon (\A^{**}, \lVert \cdot \rVert_{\varphi}^{(l)}) \to (\A^{*}, \lVert \cdot \rVert)$ is also an isometry of seminorms.
	\end{lemma}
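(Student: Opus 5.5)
We will unwind the definitions and compute the norm of $\mu \circledast \varphi$ directly, by testing it against elements of the unit ball of $\A$. The statement is linear in $\mu$, because $\Or_{\varphi}^{(l)}$ is linear. So it suffices to show that
$$
\lVert \mu \circledast \varphi \rVert = \lVert \mu \rVert_{\varphi}^{(r)}
\quad \text{for every } \mu \in \A^{**}.
$$
Applied to $\mu - \nu$, this identity yields the isometry of seminorms.

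For the left orbit map, we first use the definition of the dual norm, as a supremum over the unit ball of the predual $\A$:
$$
\lVert \mu \circledast \varphi \rVert = \sup_{b \in B_{\A}} \lvert \langle \mu \circledast \varphi, b \rangle \rvert .
$$
Next, by the definition of the bidual action, $\langle \mu \circledast \varphi, b\rangle = \langle \mu, \varphi \circledast b\rangle$. Substituting gives
$$
\lVert \mu \circledast \varphi \rVert = \sup_{b \in B_{\A}} \lvert \langle \mu, \varphi \circledast b\rangle \rvert = \sup_{\psi \in \varphi \circledast B_{\A}} \lvert \langle \mu, \psi \rangle \rvert = \lVert \mu \rVert_{\varphi \circledast B_{\A}} = \lVert \mu \rVert_{\varphi}^{(r)},
$$
which is exactly the claim.

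The right orbit map is handled symmetrically. We use $\langle \varphi \circledast \mu, b\rangle = \langle \mu, b \circledast \varphi\rangle$ and take the supremum over $b \in B_{\A}$, which gives
$$
\lVert \varphi \circledast \mu \rVert = \sup_{\psi \in B_{\A} \circledast \varphi} \lvert \langle \mu, \psi \rangle \rvert = \lVert \mu \rVert_{\varphi}^{(l)} .
$$

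There is no real obstacle here. The only point needing care is that the norm on $\A^{*}$ is computed against $B_{\A}$ rather than against $B_{\A^{**}}$. That is the correct choice, and both sides are finite since $\varphi \circledast B_{\A}$ and $B_{\A}\circledast\varphi$ are bounded by Fact \ref{fact:bidual_action_properties}.
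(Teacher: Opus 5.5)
Your proof is correct and follows essentially the same route as the paper: write the dual norm of $\mu \circledast \varphi$ (resp.\ $\varphi \circledast \mu$) as a supremum over $B_{\A}$, apply the defining identity of the bidual action, and recognize the result as $\lVert \mu \rVert_{\varphi}^{(r)}$ (resp.\ $\lVert \mu \rVert_{\varphi}^{(l)}$). Your additional remark that linearity of the orbit maps turns this norm identity into an isometry of seminorms (by applying it to $\mu - \nu$) is a harmless clarification that the paper leaves implicit.
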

	\begin{proof}
		For every $\mu \in \A^{**}$ we have:
		\begin{align*}
			\lVert \Or_{\varphi}^{(l)}(\mu) \rVert & =
			\lVert \mu \circledast \varphi \rVert\\
			& = \sup\limits_{b \in B_{\A}} \lvert \langle \mu\circledast \varphi, b\rangle \rvert\\
			& = \sup\limits_{b \in B_{\A}} \lvert \langle \mu, \varphi \circledast b\rangle\rvert\\
			& = \sup\limits_{\psi \in \varphi \circledast B_{\A}} \lvert \langle \mu, \psi\rangle\rvert = \lVert \mu \rVert_{\varphi}^{(r)}.
		\end{align*}
		\begin{align*}
			\lVert \Or_{\varphi}^{(r)}(\mu) \rVert & =
			\lVert \varphi \circledast \mu \rVert\\
			& = \sup\limits_{b \in B_{\A}}\lvert \langle \varphi \circledast \mu, b\rangle\rvert\\
			& = \sup\limits_{b \in B_{\A}} \lvert \langle \mu, b \circledast \varphi\rangle\rvert\\
			& = \sup\limits_{\psi \in B_{\A} \circledast \varphi} \lvert \langle \mu, \psi\rangle\rvert = \lVert \mu \rVert_{\varphi}^{(l)}.
		\end{align*}
	\end{proof}
	\begin{thm} \label{thm:asplund_functionals}
		Let $\varphi \in \A^{*}$, then the following are equivalent:
		\ben
		\item $\varphi$ is right Asplund, meaning that $\varphi \circledast B_{\A}$ is fragmented over $B_{\A^{**}}$.
		\label{thm:asplund_functionals:asplund_functional}
		\item The orbit map $\Or_{\varphi}^{(l)}$ is (weak-star, norm) fragmented over $B_{\A^{**}}$.
		\label{thm:asplund_functionals:fragmented_orbit_map}
		\item The family of functions $\{\arI_{\nu}^{(r)}\}_{\nu \in B_{\A^{**}}}$ is (weak-star, $\rho_{\varphi}$) fragmented over $B_{\A^{**}}$.
		\label{thm:asplund_functionals:fragmented_first_arens}
		\een
		Analogous statements hold for left Asplund functionals. 
	\end{thm}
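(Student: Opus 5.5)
The plan is to prove the cycle of equivalences (1) $\Leftrightarrow$ (2) $\Leftrightarrow$ (3). The engine is a single pointwise identity. For $\mu \in \A^{**}$ and $b\in\A$,
$$
\langle \mu, \varphi\circledast b\rangle = \langle \mu\circledast\varphi, b\rangle = \langle \Or_{\varphi}^{(l)}(\mu), b\rangle .
$$
Likewise, for $\nu\in\A^{**}$,
$$
\langle \arI_{\nu}^{(r)}(\mu),\varphi\rangle = \langle \nu, \mu\circledast\varphi\rangle = \langle \nu, \Or_{\varphi}^{(l)}(\mu)\rangle .
$$
All three conditions will be rewritten as statements about oscillation on relatively weak-star open pieces of a subset $A\subseteq B_{\A^{**}}$.

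\textbf{(1) $\Leftrightarrow$ (2).} By Definition~\ref{defin:asplund_tame_subsets}(1a), with $V=\A^{*}$, condition (1) says the following. For every nonempty $A\subseteq B_{\A^{**}}$ and every $\eps>0$ there is a weak-star open $O$ with $O\cap A\neq\emptyset$ and
$$
\sup_{b\in B_{\A}}\ \sup_{\mu,\mu'\in O\cap A} \lvert\langle \mu-\mu',\varphi\circledast b\rangle\rvert<\eps .
$$
By the first identity, the left-hand side equals $\sup_{\mu,\mu'\in O\cap A}\lVert \Or_{\varphi}^{(l)}(\mu)-\Or_{\varphi}^{(l)}(\mu')\rVert$, which is $\diam\,\Or_{\varphi}^{(l)}(O\cap A)$ in the norm of $\A^{*}$. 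This is the isometry of Lemma~\ref{lemma:orbit_maps_are_equivalent_to_uniform_convergence}. Hence the family $\varphi\circledast B_{\A}$ is fragmented over $(B_{\A^{**}},w^{*})$ exactly when $\Or_{\varphi}^{(l)}$ is (weak-star, norm) fragmented. The only point to check is that the diameter of the family's values is taken uniformly over the family members. That is precisely what "fragmented family" means in Definition~\ref{d:fr-family}, so the two conditions coincide verbatim.

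\textbf{(2) $\Leftrightarrow$ (3).} Reading (3) as in Definition~\ref{d:fr-family}, it says: for every $A$ and $\eps$ there is $O$ such that $\lvert\langle\nu,\Or_{\varphi}^{(l)}(\mu)-\Or_{\varphi}^{(l)}(\mu')\rangle\rvert<\eps$ for all $\mu,\mu'\in O\cap A$ and all $\nu\in B_{\A^{**}}$. This uses the second identity. Taking the supremum over $\nu\in B_{\A^{**}}$ recovers the norm of $\Or_{\varphi}^{(l)}(\mu)-\Or_{\varphi}^{(l)}(\mu')\in\A^{*}$, by Hahn–Banach, or simply because $B_{\A}\subseteq B_{\A^{**}}$ already norms $\A^{*}$. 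So (3) with a strict bound $\eps$ yields (2) with bound $\le\eps$, and conversely. Adjusting $\eps$ to $\eps/2$ handles the strict versus non-strict inequality.

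\textbf{Left version and expected obstacle.} The left Asplund version follows by the symmetric argument, using $\Or_{\varphi}^{(r)}$, the second Arens product via $\varphi\circledast(\mu\arII\nu)$, and the maps $\arII_{\nu}^{(l)}$. I expect no substantial obstacle; the main care is bookkeeping. First, the fragmentability in (1) must be over $B_{\A^{**}}=B_{V^{*}}$, not merely over $B_{\A}$, which matches Definition~\ref{defin:asplund_tame_subsets} directly. Second, I must correctly match $\rho_{\varphi}$ with evaluation at $\varphi$, and the first Arens product with the left orbit map: $\langle\nu\arI\mu,\varphi\rangle=\langle\nu,\mu\circledast\varphi\rangle$, not the other way around.
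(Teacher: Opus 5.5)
Your proof is correct and follows essentially the paper's approach: both arguments unwind the definitions and show that the relevant oscillations over $O\cap A$ coincide, with (1)$\Leftrightarrow$(2) coming from the isometry of Lemma~\ref{lemma:orbit_maps_are_equivalent_to_uniform_convergence}. The only difference is that you link (3) to (2) via $\langle \nu\arI\mu,\varphi\rangle=\langle \nu,\mu\circledast\varphi\rangle$ and the fact that $B_{\A^{**}}$ norms $\A^{*}$, whereas the paper links (3) to (1) using Goldstine's theorem, weak-star continuity of $\arI_{\mu}^{(l)}$, and $a\arI\mu=a\arII\mu$; your route is slightly more economical.
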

	\begin{proof}
		First, note that by applying Goldstine's theorem (Fact \ref{fact:goldstines}) and left weak-star continuity of the Arens products (Fact \ref{lemma:bidual_ball_is_left_right_topological_semigroup}), we get:
		\begin{equation*} 
			\sup\limits_{\nu \in B_{\A^{**}}} \langle \nu \arI \mu, \varphi\rangle = 
			\sup\limits_{a \in B_{\A}} \langle a \arI \mu, \varphi\rangle.
		\end{equation*}
		Now, by definition:
		\ben
		\item $\varphi \circledast B_{\A}$ is fragmented over $B_{\A^{**}}$ if and only if for every $\eps > 0$ and non-empty $C \subseteq B_{\A^{**}}$, there exists weak-star open $O \subseteq \A^{**}$ such that $C \cap O \neq \emptyset$ and
		$$
		\sup\limits_{\psi \in \varphi \circledast B_{\A}} \diam_{\lvert \cdot \rvert}
		\left(\langle C \cap O, \psi \rangle\right) < \eps.
		$$
		\item The orbit map $\Or_{\varphi}^{(l)}$ is (weak-star, norm) fragmented over $B_{\A^{**}}$ if and only if for every $\eps > 0$ and non-empty $C \subseteq B_{\A^{**}}$ there exists weak-star open $O \subseteq \A^{**}$ such that $C \cap O \neq \emptyset$ and:
		$$
		\diam_{\lVert \cdot \rVert} \left(\Or_{\varphi}^{(l)} (C \cap O) \right) < \eps.
		$$
		\item the family $\left\{\arI_{\nu}^{(r)} \right\}_{\nu \in B_{\A^{**}}}$ is (weak-star, $\rho_{\varphi}$) fragmented over $B_{\A^{**}}$ if and only if for every $\eps > 0$ and non-empty $C \subseteq B_{\A^{**}}$, there exists weak-star open $O \subseteq \A^{**}$ such that $C \cap O \neq \emptyset$ and
		$$
		\sup\limits_{\nu \in B_{\A^{**}}} \diam_{\rho_{\varphi}} \left( \arI_{\nu}^{(r)} (C \cap O) \right) < \eps.
		$$
		\een
		Note that:
		\begin{align*}
			\diam_{\lVert \cdot \rVert} \left(\Or_{\varphi}^{(l)} (C \cap O) \right) & \underset{\ref{lemma:orbit_maps_are_equivalent_to_uniform_convergence}}{=} \diam_{\lVert \cdot \rVert_{\varphi}^{(r)}} \left( C \cap O \right) \\
			& = 
			\sup\limits_{\mu, \nu \in C\cap O} \lVert \mu - \nu \rVert_{\varphi}^{(r)}\\
			& = \sup\limits_{\mu, \nu \in C\cap O} \sup\limits_{\psi \in \varphi \circledast B_{\A}} \lvert \langle \mu - \nu, \psi \rangle\rvert\\
			& = \sup\limits_{\psi \in \varphi \circledast B_{\A}}\sup\limits_{\mu, \nu \in C\cap O} \lvert \langle \mu - \nu, \psi \rangle\rvert \\
			& = \sup\limits_{\psi \in \varphi \circledast B_{\A}}\diam_{\lvert \cdot \rvert} \left(\langle C\cap O, \psi \rangle\right).
		\end{align*}
		Therefore (\ref{thm:asplund_functionals:asplund_functional}) and (\ref{thm:asplund_functionals:fragmented_orbit_map}) are indeed equivalent.
		Moreover, 
		\begin{align*}
			\sup\limits_{\nu \in B_{\A^{**}}}\diam_{\rho_{\varphi}} \left( \arI_{\nu}^{(r)} (C \cap O) \right) & = 
			\sup\limits_{\nu \in B_{\A^{**}}}\sup\limits_{\mu, \kappa \in C \cap O} \left\lvert \left\langle \arI_{\nu}^{(r)} (\mu - \kappa), \varphi\right\rangle \right\rvert \\
			& = \sup\limits_{\nu \in B_{\A^{**}}}\sup\limits_{\mu, \kappa \in C \cap O} \lvert \langle \nu \arI (\mu - \kappa), \varphi\rangle\rvert\\
			& = \sup\limits_{\mu, \kappa \in C \cap O}\sup\limits_{\nu \in B_{\A^{**}}} \lvert \langle \nu \arI (\mu - \kappa), \varphi\rangle\rvert\\			
			& = \sup\limits_{\mu, \kappa \in C \cap O} \sup\limits_{a \in B_{\A}} \lvert \langle a \arI (\mu - \kappa), \varphi\rangle\rvert\\
			& \underset{\ref{lemma:arens_properties}}{=} \sup\limits_{\mu, \kappa \in C \cap O} \sup\limits_{a \in B_{\A}} \lvert \langle a \arII (\mu - \kappa), \varphi\rangle\rvert\\
			& = \sup\limits_{\mu, \kappa \in C \cap O} \sup\limits_{a \in B_{\A}} \lvert \langle \mu - \kappa, \varphi \circledast a\rangle\rvert\\
			& = \sup\limits_{a \in B_{\A}} \sup\limits_{\mu, \kappa \in C \cap O} \lvert \langle \mu - \kappa, \varphi \circledast a\rangle\rvert\\
			& = \sup\limits_{\psi \in \varphi \circledast B_{\A}} \diam_{\lvert \cdot \rvert} \left(\langle C \cap O, \psi\rangle\right).
		\end{align*}
		Thus, (\ref{thm:asplund_functionals:asplund_functional}) and (\ref{thm:asplund_functionals:fragmented_first_arens}) are also equivalent.
	\end{proof}

	The following lemma and then Theorem \ref{thm:separable_asplund_iff_separable_arens_orbit} are analogues of a key characteristic of Asplund Banach spaces: a Banach space $V$ is Asplund if and only if every separable subspace has a separable dual \cite[thm.~A]{stegal75}.
	\begin{lemma} \label{lemma:separable_implies_asplund}
		Let $\varphi \in \A^{*}$ be some functional.
		\ben
		\item If $B_{\A^{**}} \circledast \varphi$ is norm separable then $\varphi$ is right Asplund.
		\item If $\varphi \circledast B_{\A^{**}}$ is norm separable then $\varphi$ is left Asplund.
		\een
	\end{lemma}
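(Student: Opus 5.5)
We prove only the first item; the second is symmetric, with the roles of $\Or_{\varphi}^{(l)}$ and $\Or_{\varphi}^{(r)}$ interchanged. By Theorem~\ref{thm:asplund_functionals} it suffices to show that the orbit map $\Or_{\varphi}^{(l)}$ is (weak-star, norm) fragmented over $B_{\A^{**}}$. Its image is exactly $B_{\A^{**}} \circledast \varphi$, which we assume norm separable.

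The plan is the standard argument that a weak-star continuous map into a separable metric space, defined on a compact space, is fragmented. Fix $\eps>0$ and a nonempty $C \subseteq B_{\A^{**}}$. Choose a countable norm-dense subset $\{\psi_k\}_{k\in\N}$ of $B_{\A^{**}} \circledast \varphi$. Let $F_k$ be the set of $\mu \in B_{\A^{**}}$ with $\lVert \mu \circledast \varphi - \psi_k \rVert \le \eps/3$.

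The first step is to check that each $F_k$ is weak-star closed. By Lemma~\ref{lemma:bidual_action_is_orbit_weak_star_continuous}, $\Or_{\varphi}^{(l)}$ is (weak-star, weak-star) continuous. Norm-closed balls in $\A^{*}$ are weak-star closed, being intersections of the sets $\{\chi : |\langle \chi-\psi_k, b\rangle| \le \eps/3\}$ over $b \in B_{\A}$. Hence $F_k$ is the preimage of a weak-star closed set, and so is weak-star closed in the compact space $B_{\A^{**}}$. By density, $B_{\A^{**}} = \bigcup_k F_k$.

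Next, replace $C$ by its weak-star closure $K=\overline{C}^{w^*}$, which is compact, hence a Baire space. Since $K=\bigcup_k (F_k\cap K)$, the Baire category theorem gives some $k$ and a weak-star open $O$ with $\emptyset \neq O\cap K \subseteq F_k$. Because $O$ is open and meets $\overline{C}^{w^*}$, it also meets $C$. For $\mu,\nu \in O\cap C$ we then have $\lVert \mu\circledast\varphi - \nu\circledast\varphi\rVert \le 2\eps/3 < \eps$. This is exactly the fragmentability condition for $\Or_{\varphi}^{(l)}$, and Theorem~\ref{thm:asplund_functionals} then shows that $\varphi$ is right Asplund.

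The only delicate points are the weak-star closedness of the norm balls, which makes the preimages closed, and passing from the arbitrary set $C$ to its compact closure so that Baire applies; both are routine. As an alternative route, one could use the characterization in Definition~\ref{defin:asplund_tame_subsets}(1b) via the isometry of Lemma~\ref{lemma:orbit_maps_are_equivalent_to_uniform_convergence}. However, that characterization concerns the $\rho_D$-separability of $B_{\A^{***}}$ rather than of $B_{\A^{**}}$, so the Baire argument above is cleaner.
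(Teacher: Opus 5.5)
Your argument is correct, but it takes a different route from the paper.

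\textbf{The paper's route.} The paper works directly with characterization (1b) of Definition~\ref{defin:asplund_tame_subsets}. By Lemma~\ref{lemma:orbit_maps_are_equivalent_to_uniform_convergence}, norm separability of $B_{\A^{**}}\circledast\varphi$ is the same as separability of $(B_{\A^{**}},\lVert\cdot\rVert_{\varphi}^{(r)})$. Since $\rho_{D}\le\lVert\cdot\rVert_{\varphi}^{(r)}$ for every countable $D\subseteq\varphi\circledast B_{\A}$, each $(B_{\A^{**}},\rho_{D})$ is separable, which is exactly (1b).

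\textbf{Your route.} You go through the fragmentability characterization of Theorem~\ref{thm:asplund_functionals}. You then prove that $\Or_{\varphi}^{(l)}$ is (weak-star, norm) fragmented by a Baire category argument. The ingredients are that norm-closed balls in $\A^{*}$ are weak-star closed, that $\Or_{\varphi}^{(l)}$ is (weak-star, weak-star) continuous, and that $B_{\A^{**}}$ is weak-star compact.

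\textbf{What each buys.} The paper's proof is a few lines, but it rests on the cited equivalence (1a)$\Leftrightarrow$(1b) for Asplund sets. Yours avoids that equivalence entirely. In effect you reprove its (b)$\Rightarrow$(a) direction in a setting where no reduction to countable subfamilies is needed, because the whole seminorm $\lVert\cdot\rVert_{\varphi}^{(r)}$ is already separable on $B_{\A^{**}}$. Your proof also isolates a reusable principle. A map on a compact space that is continuous into a coarser topology in which metric balls are closed, and whose image is metrically separable, is fragmented.

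\textbf{A correction to your closing remark.} In (1b) the ambient Banach space is $V=\A^{*}$, since $\varphi\circledast B_{\A}\subseteq\A^{*}$. Hence $B_{V^{*}}=B_{\A^{**}}$, not $B_{\A^{***}}$. The alternative route you set aside is therefore valid, and it is precisely the paper's proof.
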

	\begin{proof}
		We will only consider the right Asplund case.
		In virtue of Lemma \ref{lemma:orbit_maps_are_equivalent_to_uniform_convergence}, $B_{\A^{**}} \circledast \varphi$ is separable if and only if $(B_{\A^{**}}, \lVert \cdot \rVert_{\varphi}^{(r)})$ is separable.
		Also, for every countable $C \subseteq \varphi \circledast B_{\A}$ and $\mu \in \A^{**}$ we have:
		$$
		\rho_{C}(\mu) = \sup\limits_{\psi \in C} \lvert \langle \mu, \psi\rangle\rvert \leq \sup\limits_{\psi \in \varphi \circledast B_{\A}} \lvert \langle \mu, \psi\rangle\rvert = \lVert \mu \rVert_{\varphi}^{(r)}.
		$$
		So $(B_{\A^{**}}, \lVert \cdot \rVert_{\varphi}^{(r)})$ being separable implies that $(B_{\A^{**}}, \rho_{C})$ is separable for every countable $C \subseteq \varphi \circledast B_{\A}$.
		By definition \ref{defin:asplund_tame_subsets}, $\varphi \circledast B_{\A}$ is an Asplund subset, making $\varphi$ right Asplund, as required.
	\end{proof}
	
	\begin{thm} \label{thm:separable_asplund_iff_separable_arens_orbit}
		Let $\A$ be a separable Banach Algebra and let $\varphi \in \A^{*}$ be a functional.
		Then $B_{\A^{**}} \circledast \varphi$ is separable if and only if $\varphi$ is right Asplund.
	\end{thm}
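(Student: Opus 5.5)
One direction is Lemma \ref{lemma:separable_implies_asplund}: if $B_{\A^{**}} \circledast \varphi$ is norm separable then $\varphi$ is right Asplund, and this holds without any separability assumption on $\A$. So the plan concentrates on the converse: assuming $\A$ is separable and $\varphi$ is right Asplund, show that $B_{\A^{**}} \circledast \varphi$ is norm separable.

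The plan for the converse uses the factorization of Corollary \ref{cor:reflexive_asplund_rosenthal_factorization}. Since $\varphi \circledast B_{\A}$ is an Asplund subset, $\Qr_{\varphi}^{(r)} = S \circ T$ with $T\colon \A \to V$, $S\colon V \to \A^{*}$ bounded and $V$ an Asplund space. The same corollary gives $\Or_{\varphi}^{(l)} = T^{*}\circ S^{*}$, so
$$
B_{\A^{**}} \circledast \varphi = T^{*}\bigl(S^{*}(B_{\A^{**}})\bigr) \subseteq \lVert S\rVert \, T^{*}(B_{V^{*}}).
$$
It therefore suffices to show that $T^{*}(B_{V^{*}}) \subseteq \A^{*}$ is norm separable.

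Here separability of $\A$ enters. Let $W := \overline{T(\A)} \subseteq V$. This is a separable closed subspace of the Asplund space $V$, so $W^{*}$ is separable. Write $i\colon W\to V$ for the inclusion and $T_{0}\colon \A \to W$ for the corestriction, so $T = i\circ T_{0}$ and $T^{*} = T_{0}^{*}\circ i^{*}$. By Hahn–Banach, $i^{*}$ maps $B_{V^{*}}$ onto $B_{W^{*}}$ (restriction of functionals). Hence
$$
T^{*}(B_{V^{*}}) = T_{0}^{*}(B_{W^{*}}).
$$
This set is the image of a norm-separable set under the bounded operator $T_0^*$, hence norm separable. Subsets of separable metric spaces are separable, so $B_{\A^{**}} \circledast \varphi$ is separable.

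The main point needing care is the identification $\Or_{\varphi}^{(l)} = T^{*}\circ S^{*}$ on all of $\A^{**}$, not just on $\A$; this is supplied by Fact \ref{lemma:adjoint_representation}, since $\Or_{\varphi}^{(l)} = (\Qr_{\varphi}^{(r)})^{*}$. The other point to check is that the definition of an Asplund space used here (every separable subspace has separable dual) applies to the subspace $W$.

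An alternative route, avoiding the factorization, would use fragmentability (Theorem \ref{thm:asplund_functionals}). One would take a countable dense $D\subseteq \varphi\circledast B_{\A}$, which exists because $\A$ is separable. Definition \ref{defin:asplund_tame_subsets} then gives separability of $(B_{\A^{**}},\rho_D)$, and $\rho_D$ coincides with $\lVert\cdot\rVert_{\varphi}^{(r)}$ by density. Lemma \ref{lemma:orbit_maps_are_equivalent_to_uniform_convergence} then transfers separability to $B_{\A^{**}}\circledast\varphi$. This second argument is even shorter, and I would present it as the main proof, with the factorization as backup.
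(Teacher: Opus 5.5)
Your proposal is correct, and the density argument you plan to present as the main proof is exactly the paper's proof. The paper takes $D$ dense in $B_{\A}$ and works with $\varphi\circledast D$ instead, which makes no difference. One correction to how you announce it: the argument uses the separability condition (b) of Definition~\ref{defin:asplund_tame_subsets}, not fragmentability or Theorem~\ref{thm:asplund_functionals}.

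Your factorization route is also valid. It needs the heavier input of Corollary~\ref{cor:reflexive_asplund_rosenthal_factorization} together with the separable-subspace definition of Asplund spaces, and the direct argument avoids both.
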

	\begin{proof}
		First suppose that $\varphi$ is right Asplund.
		Let $D \subseteq B_{\A}$ be a countable dense subset.
		Write $\rho := \rho_{\varphi \circledast D}$.
		We claim that $\rho = \lVert \cdot \rVert_{\varphi}^{(r)}$.
		Indeed, clearly $\rho \leq \lVert \cdot \rVert_{\varphi}^{(r)}$.
		To see the converse, note that for every $\varphi \circledast a \in \varphi \circledast B_{\A}$ there exist a sequence $\{a_{n}\}_{n \in \N} \subseteq D$ such that:
		$$
		a = \lim\limits_{n \in \N} a_{n}.
		$$
		Therefore, for every $\mu \in \A^{**}$:
		$$
		\lvert \langle \mu, \varphi \circledast a\rangle \rvert = \lim\limits_{n \in \N} \lvert \langle \mu, \varphi \circledast a_{n}\rangle\rvert \leq 
		\sup\limits_{b \in D} \lvert \langle \mu, \varphi \circledast b\rangle\rvert = \rho(\mu). 
		$$
		This is true for every $\varphi \circledast a \in \varphi \circledast B_{\A}$ so:
		$$
		\lVert \mu \rVert_{\varphi}^{(r)} = 
		\sup\limits_{a \in B_{\A}} \lvert \langle\mu, \varphi \circledast a\rangle\rvert
		\leq \rho(\mu).
		$$
		Clearly, $\varphi \circledast D$ is also countable, so by definition (\ref{defin:asplund_tame_subsets}) of Asplund subsets, $(B_{\A^{**}}, \rho_{\varphi \circledast D})$ is separable.
		In other words, $(B_{\A^{**}}, \lVert \cdot \rVert_{\varphi}^{(r)})$ is separable.
		Applying Lemma \ref{lemma:orbit_maps_are_equivalent_to_uniform_convergence} we conclude that $B_{\A^{**}} \circledast \varphi$ is norm separable.
		
		The converse is a consequence of Lemma \ref{lemma:separable_implies_asplund}.
	\end{proof}
	
	\subsection{Tame Functionals}
	
	\begin{lemma} \label{lemma:adjoint_of_rosenthal_operator}
		Let $T\colon V \to W$ be a bounded operator such that $W$ is a Rosenthal space.
		Then $T^{*}(B_{W^{*}}) \subseteq V^{*}$ is co-tame.
	\end{lemma}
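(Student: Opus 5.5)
We need to show that $M := T^{*}(B_{W^{*}})$ is co-tame. By definition this means $B_{V}$, viewed as a family of functions on $M$, is tame: it contains no independent sequence over $M$. First, $M$ is weak-star compact, since $B_{W^{*}}$ is weak-star compact by Banach–Alaoglu and $T^{*}$ is (weak-star, weak-star) continuous by Fact \ref{fact:adjoint_is_weak_and_weak_star_cont}; so the definition applies.

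The plan is to argue by contradiction and transfer the independence from $V$ to $W$. Suppose $\{x_{n}\}_{n \in \N} \subseteq B_{V}$ is independent over $M$ with constants $a < b$. Then for all finite disjoint $N, L \subseteq \N$ there is $\psi \in B_{W^{*}}$ such that $\langle T^{*}\psi, x_{n}\rangle \leq a$ for $n \in N$ and $\langle T^{*}\psi, x_{m}\rangle \geq b$ for $m \in L$. By the definition of the adjoint, $\langle T^{*}\psi, x\rangle = \langle \psi, Tx\rangle$. Hence the sequence $\{Tx_{n}\}_{n \in \N}$, which lies in $\lVert T\rVert B_{W}$, is independent as a family of functions over $B_{W^{*}}$, with the same constants $a<b$.

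After rescaling by $\lVert T \rVert^{-1}$ (assuming $T \neq 0$, since otherwise $M = \{0\}$ and there is nothing to prove), we get an independent sequence in $B_{W}$ with constants $a/\lVert T\rVert < b/\lVert T\rVert$. By Definition \ref{defin:asplund_tame_subsets}(2)(d), this means $B_{W}$ is not a tame subset. But the cited fact says $W$ is Rosenthal if and only if $B_{W}$ is tame, which is a contradiction.

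There is no real obstacle here. The only point needing care is that co-tameness is phrased through independence (equivalently, eventual fragmentability) of $B_{V}$ on $M$, and that the same family of equivalences from \cite[Thm.~2.4]{GM-MTame} applies to functions restricted to the weak-star compact set $M$. So we use the independence formulation directly on both sides, as in the proof of Lemma \ref{lemma:separable_implies_cotame}. An alternative route is Fact \ref{fact:image_of_small_subset} applied to $T(B_{V})$ together with the tame-family duality, but the direct pullback argument above suffices.
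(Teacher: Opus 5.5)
Your proof is correct and follows the paper's argument. Assume an independent sequence $\{x_{n}\}$ in $B_{V}$ over $T^{*}(B_{W^{*}})$, use $\langle T^{*}\psi, x_{n}\rangle = \langle \psi, Tx_{n}\rangle$ to see that $\{Tx_{n}\}$ is independent over $B_{W^{*}}$, and contradict the Rosenthal property of $W$; your additional remarks (weak-star compactness of $T^{*}(B_{W^{*}})$, rescaling by $\lVert T\rVert^{-1}$, and the trivial case $T=0$) only make explicit what the paper leaves implicit.
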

	\begin{proof}
		By contradiction, suppose that $T^{*}(B_{W^{*}})$ is not co-tame.
		We can therefore find a bounded $\{x_{n}\}_{n \in \N} \subseteq V$ which is independent over $T^{*}(B_{W^{*}})$.
		Note that we have:
		$$
		\langle T^{*}(\varphi), x_{n}\rangle = \langle \varphi, T(x_{n})\rangle
		$$
		for every $\varphi \in B_{W^{*}}, n \in \N$.
		As a consequence, $\{T(x_{n})\}$ is an independent sequence over $B_{W^{*}}$, a contradiction.
	\end{proof}
	
	\begin{thm}\label{thm:left_tame_right_cotame}
		A functional $\varphi \in \A^{*}$ is right tame if and only if $B_{\A^{**}} \circledast \varphi \underset{\ref{cor:dual_ball_action_is_weak_star_closure}}{=} \overline{B_{\A} \circledast \varphi}^{w^{*}}$ is co-tame.
		Similarly, $\varphi$ is left tame if and only if $\varphi \circledast B_{\A^{**}}$ is co-tame.
	\end{thm}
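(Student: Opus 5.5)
Both directions come from the duality behind Lemma~\ref{lemma:orbit_maps_are_equivalent_to_uniform_convergence}: evaluating the right orbit against the bidual orbit gives the same numbers in either order,
$$
\langle \mu \circledast \varphi, b\rangle = \langle \mu, \varphi \circledast b\rangle \qquad (\mu \in \A^{**},\ b \in \A).
$$
Co-tameness of $M := B_{\A^{**}} \circledast \varphi \subseteq \A^{*}$ means that $B_{\A}$, viewed as functions on $M$, contains no independent sequence. By the identity above, a sequence $\{b_n\} \subseteq B_{\A}$ is independent over $M$ exactly when the sequence $\{\varphi \circledast b_n\} \subseteq \varphi \circledast B_{\A}$ is independent over $B_{\A^{**}}$. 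The proof therefore reduces to matching independence over $B_{\A^{**}}$ with tameness of $\varphi \circledast B_{\A}$ via condition (d) of Definition~\ref{defin:asplund_tame_subsets}.

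\textbf{($\Leftarrow$) $M$ co-tame implies $\varphi$ right tame.} Suppose $\varphi$ is not right tame. Then some $\{b_n\} \subseteq B_{\A}$ makes $\{\varphi \circledast b_n\}$ independent over $B_{\A^{**}}$, with constants $a<b$. For disjoint finite $N, K$ independence gives a $\mu \in B_{\A^{**}}$ with $\langle \mu, \varphi\circledast b_n\rangle \le a$ on $N$ and $\ge b$ on $K$. Rewriting via the identity, $\langle \mu \circledast \varphi, b_n\rangle$ satisfies the same inequalities, and $\mu \circledast \varphi \in M$. So $\{b_n\}$ is independent over $M$, contradicting co-tameness.

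\textbf{($\Rightarrow$) $\varphi$ right tame implies $M$ co-tame.} The same translation run backwards would work directly, since an independent $\{b_n\}$ over $M$ produces witnesses $\mu\circledast\varphi$ and hence independence of $\{\varphi\circledast b_n\}$ over $B_{\A^{**}}$. Alternatively, in the spirit of the paper, one can use factorization. Corollary~\ref{cor:reflexive_asplund_rosenthal_factorization} gives $\Qr_{\varphi}^{(r)} = S\circ T$ with $V$ Rosenthal, so $\Or_\varphi^{(l)} = T^{*}\circ S^{*}$ and
$$
M = \Or_\varphi^{(l)}(B_{\A^{**}}) = T^{*}\bigl(S^{*}(B_{\A^{**}})\bigr) \subseteq \|S\|\, T^{*}(B_{V^{*}}).
$$
Lemma~\ref{lemma:adjoint_of_rosenthal_operator}, applied to $T\colon \A\to V$, shows $T^{*}(B_{V^{*}})$ is co-tame. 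Co-tameness passes to weak-star closed subsets, since independence over a subset implies independence over the superset. It is also invariant under scaling by $\|S\|$, since the constants $a,b$ scale with it. Hence $M$ is co-tame.

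\textbf{Main obstacle.} The delicate points are bookkeeping rather than substance. First, the definition of co-tameness quantifies over bounded sequences in $V$ rather than in $B_{V}$, and one must normalize to $B_{\A}$ by rescaling the constants. Second, one must confirm that "tame" for $\varphi\circledast B_{\A}$ refers to independence over $B_{\A^{**}}$ with the weak-star structure, as in Definition~\ref{defin:asplund_tame_subsets}(d). The left-tame statement follows symmetrically, using $\langle \varphi\circledast\mu, b\rangle = \langle \mu, b\circledast\varphi\rangle$.
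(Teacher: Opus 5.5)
Your proof is correct. The ($\Leftarrow$) direction, and the factorization argument you give as an ``alternative'' for ($\Rightarrow$), are exactly the paper's proof. The paper writes $\lVert S^{*}\rVert$ where you write $\lVert S\rVert$, which is the same number.

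The genuinely different part is the direct translation you give first for ($\Rightarrow$), which the paper does not use. Since $M=\Or_{\varphi}^{(l)}(B_{\A^{**}})$ and $\langle \mu\circledast\varphi, b\rangle=\langle \mu, \varphi\circledast b\rangle$, a sequence $\{b_n\}\subseteq B_{\A}$ is independent over $M$ if and only if $\{\varphi\circledast b_n\}$ is independent over $B_{\A^{**}}$. Both directions therefore reduce to one observation plus condition (d) of Definition~\ref{defin:asplund_tame_subsets}.

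In effect you run the proof of Lemma~\ref{lemma:adjoint_of_rosenthal_operator} with $T=\Qr_{\varphi}^{(r)}$ and $W=\A^{*}$ directly, using Fact~\ref{lemma:adjoint_representation} to identify $T^{*}$ with $\Or_{\varphi}^{(l)}$. That proof only needs $T(B_{V})$ to be tame, not that $W$ be Rosenthal. So the Rosenthal factorization of Corollary~\ref{cor:reflexive_asplund_rosenthal_factorization} is a detour for this theorem.

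\begin{itemize}
\item \emph{Your route} is shorter and self-contained. It relies only on the independence characterization of tame sets and avoids the factorization machinery entirely.
\item \emph{The paper's route} keeps the proof uniform with the WAP column of Theorem~\ref{thm:summary}. There factorization does real work: reflexivity of $V$ is what makes $T^{*}$ (weak-star, weak) continuous. It also isolates Lemma~\ref{lemma:adjoint_of_rosenthal_operator} as a reusable statement about operators into Rosenthal spaces.
\end{itemize}

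One point both you and the paper leave implicit: co-tameness is only defined for weak-star compact sets. $M$ is weak-star compact by Lemma~\ref{lemma:bidual_action_is_orbit_weak_star_continuous} and Banach--Alaoglu, as in the proof of Corollary~\ref{cor:dual_ball_action_is_weak_star_closure}.
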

	\begin{proof}
		We only consider the right tame case, the other is virtually the same.
		First suppose that $\varphi \in \A^{*}$ is right tame.
		Applying Corollary \ref{cor:reflexive_asplund_rosenthal_factorization}, we can find a Rosenthal space $V$ and bounded maps $T\colon \A \to V$ and $S \colon V \to \A^{*}$ such that $\Qr_{\varphi}^{(r)} = S \circ T$ and $\Or_{\varphi}^{(l)} = T^{*} \circ S^{*}$.
		Applying Lemma \ref{lemma:adjoint_of_rosenthal_operator}, we conclude that $T^{*}(B_{V^{*}}) \subseteq \A^{*}$ is co-tame.
		However, 
		$$
		B_{\A^{**}} \circledast \varphi = 
		\Or_{\varphi}^{(l)}(B_{\A^{**}}) = 
		T^{*}\left( S^{*}(B_{\A^{**}})\right) \subseteq
		\lVert S^{*}\rVert T^{*}(B_{V^{*}}),
		$$
		and therefore $B_{\A^{**}} \circledast \varphi$ is indeed co-tame.
		
		Conversely, suppose that $B_{\A^{**}} \circledast \varphi$ is co-tame.
		By contradiction, let $\{\psi_{n}\}_{n \in \N} \subseteq \varphi \circledast B_{\A}$ be an independent sequence.
		By definition, we can find $\{a_{n}\}_{n \in \N} \subseteq B_{\A}$ such that $\psi_{n} = \varphi \circledast a_{n}$.
		For every $\mu \in B_{\A^{**}}$ we have:
		$$
		\langle \mu, \psi_{n} \rangle = 
		\langle \mu, \varphi \circledast a_{n}\rangle = 
		\langle \mu \circledast \varphi, a_{n}\rangle.
		$$
		This would imply that $\{a_{n}\}_{n \in \N}$ is an independent sequence over $B_{\A^{**}} \circledast \varphi$, a contradiction.
	\end{proof}
	
	\begin{f}\cite[Thm.~4.3]{meTameFunc}\label{fact:tame_implies_co_tame}
		Let $V$ be a Banach space.
		If $M \subseteq V^{*}$ is a weak-star compact, tame subset, then it is co-tame.
	\end{f}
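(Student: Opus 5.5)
Fact~\ref{fact:tame_implies_co_tame}, cited from \cite[Thm.~4.3]{meTameFunc}, needs a proof sketch. The plan is to argue by contraposition, using the definitions of tame subsets (Definition~\ref{defin:asplund_tame_subsets}) and of co-tameness. Let $M \subseteq V^{*}$ be weak-star compact and suppose $M$ is not co-tame. Then there are a bounded sequence $\{x_n\}_{n\in\N}\subseteq B_V$ and reals $a<b$ such that for every $\alpha\in\{0,1\}^{\N}$ there is $\varphi_\alpha\in M$ with $\langle\varphi_\alpha,x_n\rangle\ge b$ when $\alpha_n=1$ and $\langle\varphi_\alpha,x_n\rangle\le a$ when $\alpha_n=0$. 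Here the infinite version follows from the finite version of independence by weak-star compactness of $M$: take a cluster point of the witnesses for finite approximations of $\alpha$. The aim is to extract from $M$ an $\ell^1$-sequence, or equivalently an independent sequence over $B_{V^{**}}$. This contradicts tameness of $M$.

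The key step is a Cantor-tree (dyadic) construction inside $M$. For each $n$ consider the closed subsets $M_n^{1}=\{\varphi\in M:\langle\varphi,x_n\rangle\ge b\}$ and $M_n^{0}=\{\varphi\in M:\langle\varphi,x_n\rangle\le a\}$ of $M$. These are weak-star compact, and all finite intersections $\bigcap_{n\in F}M_n^{\alpha_n}$ are nonempty. The map $\alpha\mapsto\varphi_\alpha$ thus produces a copy of the Cantor set's independence pattern. I would then pass to differences $\psi_n$ and build a sequence in $M$ on which suitable bidual elements act independently. Concretely, I would pick $\varphi_n := \varphi_{\alpha^{(n)}}$ for a carefully chosen sequence of patterns $\alpha^{(n)}$. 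Then I would look for weak-star cluster points $\Phi$ of $\{x_n\}$ along ultrafilters, viewed in $V^{**}$ and hence as elements of $B_{V^{**}}$ after scaling. The hope is that these satisfy $\langle\Phi,\varphi_n\rangle\ge b$ or $\le a$ according to a prescribed finite pattern, which would make $\{\varphi_n\}$ independent over $B_{V^{**}}$.

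A cleaner route, which I would try first, uses Rosenthal's dichotomy. If $M$ were tame, every sequence in $M$ would have a weak-Cauchy subsequence. Use the $\varphi_\alpha$ to define the restriction map $R\colon V\to C(M)$, $x\mapsto \langle\cdot,x\rangle|_M$. Non-co-tameness says that $\{R x_n\}$ is an independent sequence of continuous functions on the compact space $M$. By Rosenthal's $\ell^1$ theorem (the Rosenthal--Dor form), $\{Rx_n\}$ is then equivalent to the $\ell^1$ basis, with an $\ell^1$-lower estimate $\|\sum c_n x_n\|\ge\|\sum c_n Rx_n\|_\infty\ge\delta\sum|c_n|$. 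Dually, the weak-star closed absolutely convex hull of $M$ norms $\overline{\spn}\{x_n\}\cong\ell^1$ up to constants. So some sequence in $M$ must fail to have a weak-Cauchy subsequence: otherwise, by the Bourgain--Fremlin--Talagrand theory (the tame/Baire-1 dichotomy), $M$ viewed as a family of functions on $B_{V^{**}}$ would be eventually fragmented and hence would not carry an independent $\ell^1$-pattern. This is the main obstacle: transferring independence of $\{x_n\}$ over $M$ into independence of some sequence in $M$ over $B_{V^{**}}$.

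To overcome it, I would use the symmetry of the evaluation pairing together with the double-limit characterization of tameness, namely the absence of an infinite independent matrix, as in \cite[Thm.~2.4]{GM-MTame}. Independence of $\{x_n\}$ over $M$ gives an infinite $\{0,1\}$-pattern matrix $(\langle\varphi_\alpha,x_n\rangle)$. Choosing $\varphi_k:=\varphi_{\alpha^{(k)}}$ with $\alpha^{(k)}$ ranging over a countable independent family of subsets of $\N$ (for instance $\alpha^{(k)}_n=$ the $k$-th binary digit of $n$), the matrix becomes independent in the transposed direction. The $x_n$ then witness, for any finite disjoint $N,K$, that $\langle\varphi_k,x_n\rangle\ge b$ for $k\in K$ and $\le a$ for $k\in N$. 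Since $x_n\in B_V\subseteq B_{V^{**}}$, the sequence $\{\varphi_k\}\subseteq M$ is independent over $B_{V^{**}}$. This contradicts condition (d) of tameness in Definition~\ref{defin:asplund_tame_subsets}, and with this binary-digit choice the earlier Rosenthal detour becomes unnecessary.
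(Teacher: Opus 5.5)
The paper does not prove this Fact; it quotes it from \cite[Thm.~4.3]{meTameFunc}, so there is no in-paper argument to compare with. Your final paragraph, read on its own, is a complete and correct proof.

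\begin{enumerate}
\item Suppose $\{x_n\}\subseteq B_V$ is independent over $M$ with constants $a<b$. Fix $\alpha\in\{0,1\}^{\N}$. The weak-star closed sets $\{\varphi\in M:\langle\varphi,x_n\rangle\ge b\}$ (for $\alpha_n=1$) and $\{\varphi\in M:\langle\varphi,x_n\rangle\le a\}$ (for $\alpha_n=0$) have the finite intersection property. Weak-star compactness of $M$ then gives some $\varphi_\alpha\in M$ realizing the whole infinite pattern.
\item Let $\alpha^{(k)}_n$ be the $k$-th binary digit of $n$. For finite disjoint $N,K$, the index $n=\sum_{k\in K}2^{k}$ witnesses the required pattern.
\item Hence $\{\varphi_{\alpha^{(k)}}\}_{k}\subseteq M$ is independent over $J(B_V)\subseteq B_{V^{**}}$ with the same $a<b$. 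This contradicts condition (d) in the definition of tame subsets (Definition~\ref{defin:asplund_tame_subsets}).
\end{enumerate}

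If your $\N$ starts at $1$, the case $K=\emptyset$ needs $n=2^{m}$ with $m\notin N$ instead of $n=0$; this is cosmetic.

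Compactness enters exactly once, in passing from finite to infinite patterns, and it cannot be dropped. The set $\{1_F: F\subseteq\N \text{ finite}\}\subseteq\ell^\infty=(\ell^1)^*$ is tame: every sequence in it has a subsequence converging pointwise on $\beta\N$, hence weakly Cauchy. Yet $\{e_n\}\subseteq B_{\ell^1}$ is independent over it.

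For the write-up, delete your first two paragraphs. The Cantor-tree construction, the unexplained differences $\psi_n$, and the ultrafilter limits of $\{x_n\}$ are never carried out. The Rosenthal/Bourgain--Fremlin--Talagrand paragraph also ends in a non sequitur. Eventual fragmentability of $M$ over $B_{V^{**}}$ does not by itself exclude an independent pattern of $\{x_n\}$ over $M$. That transfer is exactly the content of the statement, and only the binary-digit transposition supplies it.
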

	\begin{cor}
		Let $\varphi \in \A^{*}$.
		If $B_{\A^{**}} \circledast \varphi$ is \emph{tame}, then $\varphi$ is both left and right tame.
	\end{cor}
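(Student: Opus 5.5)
Two implications are needed: right tameness and left tameness.

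For right tameness, apply Fact~\ref{fact:tame_implies_co_tame} with $V=\A$ and $M=B_{\A^{**}} \circledast \varphi \subseteq \A^{*}$. By Corollary~\ref{cor:dual_ball_action_is_weak_star_closure}, $M$ is the weak-star closure of $B_{\A} \circledast \varphi$, so it is weak-star compact; it is also tame by hypothesis. Hence $M$ is co-tame, and Theorem~\ref{thm:left_tame_right_cotame} gives that $\varphi$ is right tame.

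For left tameness, I would use the trivial inclusion $B_{\A} \circledast \varphi \subseteq B_{\A^{**}} \circledast \varphi$, valid because $J(B_{\A}) \subseteq B_{\A^{**}}$ and $J(a)\circledast\varphi = a\circledast\varphi$ by Fact~\ref{fact:bidual_action_properties}. Tameness in the sense of Definition~\ref{defin:asplund_tame_subsets} (no $\ell^{1}$-sequences, equivalently no independent sequences over $B_{\A^{**}}$) passes to subsets, since any sequence in the subset is a sequence in the larger set. Therefore $B_{\A}\circledast\varphi$ is tame, which is exactly the definition of $\varphi$ being left tame.

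Neither step is a real obstacle. The only point to check is that Fact~\ref{fact:tame_implies_co_tame} applies in the dual $\A^{*}$ with its weak-star topology from $\A$, and Corollary~\ref{cor:dual_ball_action_is_weak_star_closure} supplies exactly that weak-star compactness.
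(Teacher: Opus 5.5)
Your proposal is correct and follows the paper's proof: left tameness comes from the inclusion $B_{\A}\circledast\varphi \subseteq B_{\A^{**}}\circledast\varphi$, and right tameness comes from Fact~\ref{fact:tame_implies_co_tame} followed by Theorem~\ref{thm:left_tame_right_cotame}. You also spell out the weak-star compactness of $B_{\A^{**}}\circledast\varphi$ needed to apply the Fact, which the paper leaves implicit.
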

	\begin{proof}
		Clearly, $B_{\A} \circledast \varphi \subseteq B_{\A^{**}} \circledast \varphi$ so $\varphi$ is left tame.
		Using fact \ref{fact:tame_implies_co_tame} we can conclude that $B_{\A^{**}} \circledast \varphi$ is co-tame.
		Now, applying Theorem \ref{thm:left_tame_right_cotame} we get the desired result.
	\end{proof}
	
	\subsection{Summary}
	
	The following is a summary of Proposition \ref{prop:wap_equivalence}, Theorem \ref{thm:asplund_functionals}, \ref{thm:separable_asplund_iff_separable_arens_orbit} and Theorem \ref{thm:left_tame_right_cotame}.
	\begin{thm} \label{thm:summary}
		Let $\A$ be a Banach algebra and $\varphi \in \A^{*}$ be a functional.
		Then each column of the following table shows equivalent conditions, except for the cell marked with $\sharp$, which is equivalent when $\A$ is separable.		
		\begin{center}
			\begin{tabular}{||c || c | c | c ||} 
				\hline
				criterion & $\varphi \in \wap(\A)$ & $\varphi \in \asp_{r}(\A)$ & $\varphi \in \tame_{r}(\A)$ \\ 
				\hline\hline
				$\varphi \circledast B_{\A}$ is & 
				relatively weakly compact & 
				Asplund & 
				tame\\
				\hline
				$\Or_{\varphi}^{(l)}$ is & 
				(weak-star, weak) continuous & 
				\makecell{(weak-star, norm) fragmented \\ over $B_{\A^{**}}$ } & 
				-\\
				\hline
				$\left\{\arI_{\nu}^{(r)}\right\}_{\nu \in B_{\A^{**}}}$ is & \makecell{ a family of \\ (weak-star, $\rho_{\varphi}$) continuous functions } & \makecell{ (weak-star, $\rho_{\varphi}$) fragmented \\ family over $B_{\A^{**}}$} & 
				- \\
				\hline
				$B_{\A^{**}} \circledast \varphi$ is&
				weakly compact &
				separable $\sharp$&
				co-tame \\
				\hline
			\end{tabular}
		\end{center}
	\end{thm}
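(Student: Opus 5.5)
This theorem only collects results proved above, so the plan is to check each column of the table against the statement it comes from.

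For the first column ($\varphi \in \wap(\A)$), I would cite Proposition~\ref{prop:wap_equivalence}.
\begin{itemize}
\item Row one is item~\ref{prop:wap_equivalence:right_weakly_compact}, and it is equivalent to item~\ref{prop:wap_equivalence:left_weakly_compact}, which is the definition of $\wap(\A)$.
\item Row two is item~\ref{prop:wap_equivalence:left_orbit_weak_continuous}.
\item Row four is item~\ref{prop:wap_equivalence:left_arens_orbit_weakly_compact}.
\item Row three needs a short translation. Item~\ref{prop:wap_equivalence:first_arens_continuity} says that $\arI_{\nu}^{(r)}$ is (weak-star, $\rho_{\varphi}$) continuous for \emph{every} $\nu \in \A^{**}$. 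By linearity of $\nu \mapsto \nu \arI \mu$ and scaling, this is the same as asking it for every $\nu \in B_{\A^{**}}$. That is exactly the table entry.
\end{itemize}

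For the second column ($\varphi \in \asp_{r}(\A)$), rows one to three are items (\ref{thm:asplund_functionals:asplund_functional})--(\ref{thm:asplund_functionals:fragmented_first_arens}) of Theorem~\ref{thm:asplund_functionals}. The definition of right Asplund is that $\varphi \circledast B_{\A}$ is an Asplund subset. By Definition~\ref{defin:asplund_tame_subsets}(1a), this means the set is fragmented as a family over $(B_{\A^{***}}, w^{*})$, whereas Theorem~\ref{thm:asplund_functionals} works over $B_{\A^{**}}$ with $\A^{*}$ in the role of the space. I regard this identification as the convention already adopted in Theorem~\ref{thm:asplund_functionals}(\ref{thm:asplund_functionals:asplund_functional}), so nothing new has to be proved here.

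The cell marked $\sharp$ (row four) is Theorem~\ref{thm:separable_asplund_iff_separable_arens_orbit}, which assumes $\A$ is separable. In general only one direction holds: separability of the bidual orbit implies right Asplund, by Lemma~\ref{lemma:separable_implies_asplund}. That is precisely why the table restricts this cell to separable $\A$.

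For the third column ($\varphi \in \tame_{r}(\A)$), row one is the definition of right tame. Row four is Theorem~\ref{thm:left_tame_right_cotame}, using Corollary~\ref{cor:dual_ball_action_is_weak_star_closure} to identify $B_{\A^{**}}\circledast\varphi$ with the weak-star closure of $B_{\A}\circledast\varphi$. The two rows marked ``-'' assert nothing.

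The only step that is more than citation is the first-column, third-row rephrasing of Proposition~\ref{prop:wap_equivalence}(\ref{prop:wap_equivalence:first_arens_continuity}) in terms of $B_{\A^{**}}$, and that is a routine homogeneity argument.
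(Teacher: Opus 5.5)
Your proposal is correct and follows the paper, which presents this table simply as a summary of Proposition~\ref{prop:wap_equivalence}, Theorems~\ref{thm:asplund_functionals} and~\ref{thm:separable_asplund_iff_separable_arens_orbit}, and Theorem~\ref{thm:left_tame_right_cotame}; your scaling remark for the first column is the only (routine) addition. One small correction: since $\varphi \circledast B_{\A} \subseteq \A^{*}$, Definition~\ref{defin:asplund_tame_subsets}(1a) with $V = \A^{*}$ gives fragmentation over $(B_{V^{*}}, w^{*}) = (B_{\A^{**}}, w^{*})$, so no $\A^{***}$ is involved and no identification or convention is needed in the second column.
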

	Informally, this theorem implies that whenever $\varphi \circledast B_{\A}$ exhibits properties of the unit ball of some Banach space, then $B_{\A^{**}} \circledast \varphi$ exhibits the properties of the dual ball, and vice versa.
	
	\begin{q}
		Are there useful analogous characterizations of tameness of functionals using the orbit map or Arens products (the missing cells in the table)?
	\end{q}
	One notable idea currently missing in Theorem~\ref{thm:summary} is the characterization of weakly almost periodic functionals using the "Arens commutator" $\mu \arI \nu - \mu \arII \nu$.
	It has already been proposed as an interesting research direction \cite[Quest.~7.3]{meTameFunc}.
	We believe that the other connections in the table increase the suspicion that some analogy should be possible.
	
	\section{Further directions and examples}\label{section:directions_and_examples}
	This section is intended as a short collection of potential applications and future directions rather than a comprehensive treatment. The results below illustrate how the abstract criteria can be used in concrete settings; several natural problems and extensions are deferred to further work.
	
	\subsection{Operator Algebra}
	In this subsection we outline an application of the previous results to matrix coefficients of operator algebras.
	In \cite{Kaijser_1981} and later in \cite{Filali}, the authors show some natural circumstances in which weakly almost periodic functionals can be represented as matrix coefficients on reflexive representations.
	Here we take a different approach and present some sufficient conditions for such functionals to be Asplund/tame.
	
	Let $V$ be a Banach space and let $\A$ be a Banach sub-algebra of bounded operators together with composition.
	Clearly, $\A$ acts on $V$ from the left.
	It also acts from the right on $V^{*}$ via the map $\cdot\colon V^{*} \times \A\to V^{*}$ defined as:
	$$
	\forall x \in V, \phi \in V^{*}, a \in \A: \langle \phi \cdot a, x\rangle := \langle \phi, ax\rangle.
	$$
	For every $x \in V, \phi \in V^{*}$, consider the matrix coefficients $\Phi_{x, \phi}\in \A^{*}$ defined via:
	$$
	\forall b \in \A: \langle \Phi_{x, \phi}, b\rangle := \langle \phi, bx\rangle.
	$$
	
	\begin{lemma}\label{lemma:algebra_of_bounded_operators_on_matrix_coefficients}
		For every $a \in \A, x \in V, \phi \in V^{*}$ we have:
		$$
		a \circledast \Phi_{x, \phi} = \Phi_{ax, \phi}, 
		\Phi_{x, \phi} \circledast a = \Phi_{x, \phi \cdot a}.
		$$
	\end{lemma}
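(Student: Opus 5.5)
The proof is a direct verification: since $\A^{*}$ consists of functionals on $\A$, it suffices to evaluate both sides of each identity at an arbitrary $b \in \A$ and compare. The only ingredients are the definitions of the bimodule actions $\circledast$ of $\A$ on $\A^{*}$, the definition of $\Phi_{x,\phi}$, the right action $\phi \cdot a$ of $\A$ on $V^{*}$, and the fact that the product in $\A$ is composition of operators.

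For the left action, fix $b \in \A$. By definition, $\langle a \circledast \Phi_{x,\phi}, b\rangle = \langle \Phi_{x,\phi}, ba\rangle$. Since the product is composition, this equals
$$
\langle \phi, (ba)x\rangle = \langle \phi, b(ax)\rangle = \langle \Phi_{ax,\phi}, b\rangle.
$$
As $b$ is arbitrary, $a \circledast \Phi_{x,\phi} = \Phi_{ax,\phi}$.

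For the right action, again fix $b \in \A$. By definition, $\langle \Phi_{x,\phi} \circledast a, b\rangle = \langle \Phi_{x,\phi}, ab\rangle$, and using the definition of $\phi \cdot a$ we get
$$
\langle \Phi_{x,\phi}, ab\rangle = \langle \phi, a(bx)\rangle = \langle \phi \cdot a, bx\rangle = \langle \Phi_{x,\phi\cdot a}, b\rangle.
$$
Hence $\Phi_{x,\phi} \circledast a = \Phi_{x,\phi\cdot a}$.

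There is no genuine obstacle here. The only point needing care is the order of factors in the two bimodule definitions: $\langle a\circledast\psi, b\rangle = \langle \psi, ba\rangle$ while $\langle \psi\circledast a, b\rangle = \langle \psi, ab\rangle$. This ordering is exactly what makes $a$ act on $x$ in the first identity and on $\phi$ in the second.
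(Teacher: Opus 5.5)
Your proposal is correct and matches the paper's proof exactly: evaluate both sides at an arbitrary $b \in \A$, unwind the bimodule definitions $\langle a \circledast \psi, b\rangle = \langle \psi, ba\rangle$ and $\langle \psi \circledast a, b\rangle = \langle \psi, ab\rangle$, and use that the product is composition together with $\langle \phi \cdot a, bx\rangle = \langle \phi, abx\rangle$.
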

	\begin{proof}
		Indeed, for every $b \in \A$:
		\begin{align*}
			\langle a \circledast \Phi_{x, \phi}, b\rangle &=
			\langle \Phi_{x, \phi}, ba\rangle = 
			\langle \phi, ba x\rangle =
			\langle \Phi_{ax, \phi}, b\rangle,\\
			\langle \Phi_{x, \phi} \circledast a, b\rangle & =
			\langle \Phi_{x, \phi}, ab\rangle =
			\langle \phi, abx\rangle =
			\langle \phi \cdot a, bx\rangle =
			\langle \Phi_{x, \phi\cdot a}, b\rangle.
		\end{align*}
	\end{proof}
	For every $x \in V, \phi \in V^{*}$ we define $\Phi_{x, \cdot}\colon V^{*} \to \A^{*}, \Phi_{\cdot, \phi}\colon V \to \A^{*}$ as:
	$$
	\Phi_{x, \cdot}(\phi) := \Phi_{x, \phi},\ \Phi_{\cdot, \phi}(x) := \Phi_{x, \phi}.
	$$
	Also, we will write $\widehat{\Or}_{\phi}^{(l)}\colon V^{**} \to \A^{*}$, $\widehat{\Or}_{x}^{(r)}\colon V^{*} \to \A^{*}$ via:
	\begin{align*}
		\left\langle \widehat{\Or}_{\phi}^{(l)}(\omega), b \right\rangle := \langle \omega, \phi \cdot b \rangle,\ 
		\left\langle \widehat{\Or}_{x}^{(r)}(\theta), b \right\rangle := \langle \theta, bx \rangle,
	\end{align*}
	for every $b \in \A$, $\theta \in V^{*}$ and $\omega \in V^{**}$.
	
	In the following passages we will abbreviate $\Or_{\Phi_{x, \phi}}^{(l)}$ and $\Or_{\Phi_{x, \phi}}^{(r)}$ to simply $\Or_{x, \phi}^{(l)}$ and $\Or_{x, \phi}^{(r)}$.
	\begin{lemma} \label{lemma:operator_orbit_factor}
		Let $x \in V, \phi \in V^{*}$. Then:
		\begin{align*}
			\Or_{x, \phi}^{(l)} &= \widehat{\Or}_{\phi}^{(l)} \circ \Phi_{x, \cdot}^{*},\\
			\Or_{x, \phi}^{(r)} &= \widehat{\Or}_{x}^{(r)} \circ \Phi_{\cdot, \phi}^{*}.
		\end{align*}
	\end{lemma}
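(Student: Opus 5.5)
Both identities are equalities of maps $\A^{**} \to \A^{*}$, so I will fix $\mu \in \A^{**}$ and $b \in \A$ and check that both sides pair to the same number with $b$. First I need the adjoints. Since $\Phi_{x,\cdot}\colon V^{*} \to \A^{*}$, its adjoint is $\Phi_{x,\cdot}^{*}\colon \A^{**} \to V^{**}$, given by
$$
\langle \Phi_{x,\cdot}^{*}(\mu), \theta\rangle = \langle \mu, \Phi_{x,\theta}\rangle, \qquad \theta \in V^{*}.
$$
Similarly, $\Phi_{\cdot,\phi}\colon V \to \A^{*}$ has adjoint $\Phi_{\cdot,\phi}^{*}\colon \A^{**} \to V^{*}$, given by
$$
\langle \Phi_{\cdot,\phi}^{*}(\mu), y\rangle = \langle \mu, \Phi_{y,\phi}\rangle, \qquad y \in V.
$$
So the compositions in the statement are well typed: $\widehat{\Or}_{\phi}^{(l)}$ is defined on $V^{**}$ and $\widehat{\Or}_{x}^{(r)}$ on $V^{*}$.

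For the left identity I unfold the definitions step by step:
$$
\langle \Or_{x,\phi}^{(l)}(\mu), b\rangle = \langle \mu \circledast \Phi_{x,\phi}, b\rangle = \langle \mu, \Phi_{x,\phi} \circledast b\rangle = \langle \mu, \Phi_{x,\phi\cdot b}\rangle.
$$
The last equality is Lemma \ref{lemma:algebra_of_bounded_operators_on_matrix_coefficients}. Applying the adjoint formula with $\theta = \phi\cdot b$ gives
$$
\langle \mu, \Phi_{x,\phi\cdot b}\rangle = \langle \Phi_{x,\cdot}^{*}(\mu), \phi\cdot b\rangle,
$$
and by the definition of $\widehat{\Or}_{\phi}^{(l)}$ (with $\omega = \Phi_{x,\cdot}^{*}(\mu)$) this equals $\langle \widehat{\Or}_{\phi}^{(l)}(\Phi_{x,\cdot}^{*}(\mu)), b\rangle$.

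For the right identity the computation is parallel:
$$
\langle \Or_{x,\phi}^{(r)}(\mu), b\rangle = \langle \Phi_{x,\phi} \circledast \mu, b\rangle = \langle \mu, b \circledast \Phi_{x,\phi}\rangle = \langle \mu, \Phi_{bx,\phi}\rangle,
$$
again using Lemma \ref{lemma:algebra_of_bounded_operators_on_matrix_coefficients}. Applying the adjoint formula with $y = bx$ gives
$$
\langle \mu, \Phi_{bx,\phi}\rangle = \langle \Phi_{\cdot,\phi}^{*}(\mu), bx\rangle = \langle \widehat{\Or}_{x}^{(r)}(\Phi_{\cdot,\phi}^{*}(\mu)), b\rangle,
$$
where the last step is the definition of $\widehat{\Or}_{x}^{(r)}$ with $\theta = \Phi_{\cdot,\phi}^{*}(\mu)$.

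The argument has no real obstacle. The only points needing care are the bookkeeping of which action, $\mu \circledast \varphi$ or $\varphi \circledast \mu$, pairs with which module action on $\A^{*}$, and checking that $\Phi_{x,\cdot}$ and $\Phi_{\cdot,\phi}$ are bounded linear maps so their adjoints exist. Boundedness follows from $\lvert\langle \phi, bx\rangle\rvert \le \lVert\phi\rVert\,\lVert b\rVert\,\lVert x\rVert$.
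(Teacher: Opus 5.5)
Your proposal is correct and matches the paper's proof essentially line for line. Like the paper, you pair both sides with $b \in \A$, unfold the definition of the bidual action, apply Lemma~\ref{lemma:algebra_of_bounded_operators_on_matrix_coefficients}, pass through the adjoint, and finish with the definition of $\widehat{\Or}$. Your extra remark on the boundedness of $\Phi_{x,\cdot}$ and $\Phi_{\cdot,\phi}$ is a harmless addition that the paper leaves implicit.
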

	\begin{proof}
		For every $b \in \A, \mu \in \A^{**}$:
		\begin{align*}
			\langle \Or_{x, \phi}^{(l)}(\mu), b\rangle & = 
			\langle \mu \circledast \Phi_{x, \phi}, b\rangle\\
			& = \langle \mu, \Phi_{x, \phi} \circledast b\rangle\\
			& \underset{\ref{lemma:algebra_of_bounded_operators_on_matrix_coefficients}}{=}
			\langle \mu, \Phi_{x, \phi \cdot b}\rangle\\
			& = \langle \mu, \Phi_{x, \cdot}(\phi \cdot b)\rangle\\
			& = \langle \Phi_{x, \cdot}^{*}(\mu), \phi \cdot b\rangle\\
			& = \left\langle \widehat{\Or}_{\phi}^{(l)}\left(\Phi_{x, \cdot}^{*}(\mu) \right), b \right\rangle,\\
			\langle \Or_{x, \phi}^{(r)}(\mu), b\rangle & = 
			\langle \Phi_{x, \phi} \circledast \mu, b\rangle\\
			& = \langle \mu, b\circledast \Phi_{x, \phi}\rangle \\
			& \underset{\ref{lemma:algebra_of_bounded_operators_on_matrix_coefficients}}{=}
			\langle \mu, \Phi_{bx, \phi}\rangle\\
			& = \langle \mu, \Phi_{\cdot, \phi}(bx)\rangle\\
			& = \langle \Phi_{\cdot, \phi}^{*}(\mu), bx\rangle\\
			& = \left\langle \widehat{\Or}_{x}^{(r)}(\Phi_{\cdot, \phi}^{*}(\mu)), b\right\rangle.
		\end{align*}
	\end{proof}
	
	Thus, we get the following sufficient conditions.
	\begin{prop} \label{prop:sufficient_conditions_for_matrix_coefficients}
		Let $x \in V, \phi \in V^{*}$.
		Then:
		\ben
		\item If $\widehat{\Or}_{\phi}^{(l)}\colon B_{V^{**}} \to \A^{*}$ is (weak-star, norm) fragmented, then $\Phi_{x, \phi}$ is right Asplund.
		\item If $\widehat{\Or}_{x}^{(r)}\colon B_{V^{*}} \to \A^{*}$ is (weak-star, norm) fragmented, then $\Phi_{x, \phi}$ is left Asplund.
		\item If $\widehat{\Or}_{\phi}^{(l)}(B_{V^{**}})$ is separable (resp. co-tame), then $\Phi_{x, \phi}$ is right Asplund (resp. tame).
		\item If $\widehat{\Or}_{x}^{(r)}(B_{V^{*}})$ is separable (resp. co-tame), then $\Phi_{x, \phi}$ is left Asplund (resp. tame).
		\een
	\end{prop}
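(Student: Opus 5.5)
The approach is to combine the factorization of Lemma~\ref{lemma:operator_orbit_factor} with the characterizations of Theorem~\ref{thm:asplund_functionals}, Lemma~\ref{lemma:separable_implies_asplund} and Theorem~\ref{thm:left_tame_right_cotame}. We treat only the right-hand statements (1) and (3); (2) and (4) follow verbatim from the second identity of Lemma~\ref{lemma:operator_orbit_factor}. The basic observation is that $\Phi_{x,\cdot}\colon V^{*}\to\A^{*}$ satisfies $\lVert \Phi_{x,\phi}\rVert \le \lVert x\rVert\lVert\phi\rVert$. Hence $\Phi_{x,\cdot}^{*}\colon \A^{**}\to V^{**}$ is bounded with norm at most $\lVert x\rVert$, and by Fact~\ref{fact:adjoint_is_weak_and_weak_star_cont} it is (weak-star, weak-star) continuous. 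Without loss of generality $\lVert x\rVert\le 1$: otherwise rescale $x$, since $\Phi_{tx,\phi}=t\Phi_{x,\phi}$ and all the properties involved are invariant under nonzero scalars. Then $\Phi_{x,\cdot}^{*}(B_{\A^{**}})\subseteq B_{V^{**}}$.

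For (1), use Theorem~\ref{thm:asplund_functionals}: it suffices to show that $\Or_{x,\phi}^{(l)}=\widehat{\Or}_{\phi}^{(l)}\circ\Phi_{x,\cdot}^{*}$ is (weak-star, norm) fragmented on $B_{\A^{**}}$. The step needing care is that a fragmented map precomposed with a continuous map remains fragmented. Given a nonempty $C\subseteq B_{\A^{**}}$ and $\eps>0$, apply fragmentability of $\widehat{\Or}_{\phi}^{(l)}$ to the nonempty set $A:=\Phi_{x,\cdot}^{*}(C)\subseteq B_{V^{**}}$. This yields a weak-star open $U\subseteq V^{**}$ with $U\cap A\neq\emptyset$ and $\diam \widehat{\Or}_{\phi}^{(l)}(U\cap A)<\eps$. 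Set $O:=(\Phi_{x,\cdot}^{*})^{-1}(U)$, which is weak-star open by continuity. Then $O\cap C\neq\emptyset$, and $\Or_{x,\phi}^{(l)}(O\cap C)\subseteq \widehat{\Or}_{\phi}^{(l)}(U\cap A)$, so its diameter is less than $\eps$. This gives right Asplundness.

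For (3), the factorization gives
\[
B_{\A^{**}}\circledast\Phi_{x,\phi}=\Or_{x,\phi}^{(l)}(B_{\A^{**}})=\widehat{\Or}_{\phi}^{(l)}\bigl(\Phi_{x,\cdot}^{*}(B_{\A^{**}})\bigr)\subseteq \widehat{\Or}_{\phi}^{(l)}(B_{V^{**}}).
\]
\emph{Separable case.} A subset of a separable metric space is separable, so $B_{\A^{**}}\circledast\Phi_{x,\phi}$ is norm separable, and Lemma~\ref{lemma:separable_implies_asplund} gives right Asplundness.

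\emph{Co-tame case.} Co-tameness of $M:=\widehat{\Or}_{\phi}^{(l)}(B_{V^{**}})$ means that no bounded sequence in $\A$ is independent over $M$. Any sequence independent over a subset would also be independent over $M$, so weak-star compact subsets of $M$ are co-tame. By Lemma~\ref{lemma:bidual_action_is_orbit_weak_star_continuous} and Corollary~\ref{cor:dual_ball_action_is_weak_star_closure}, $B_{\A^{**}}\circledast\Phi_{x,\phi}$ is weak-star compact, hence co-tame. Theorem~\ref{thm:left_tame_right_cotame} then yields that $\Phi_{x,\phi}$ is right tame.

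The only genuinely delicate point is the bookkeeping with unit balls, namely ensuring that $\Phi_{x,\cdot}^{*}$ maps $B_{\A^{**}}$ into $B_{V^{**}}$. The rescaling of $x$ handles this; alternatively, one can note that fragmentability, separability and co-tameness all pass to $tB_{V^{**}}$.
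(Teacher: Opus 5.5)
Your proof is correct and is essentially the paper's argument: factor $\Or_{x,\phi}^{(l)}=\widehat{\Or}_{\phi}^{(l)}\circ\Phi_{x,\cdot}^{*}$ via Lemma~\ref{lemma:operator_orbit_factor}, then apply Theorem~\ref{thm:asplund_functionals}, Lemma~\ref{lemma:separable_implies_asplund} and Theorem~\ref{thm:left_tame_right_cotame}. The paper cites a lemma from the literature for the fragmented-composed-with-continuous step and works with $rB_{V^{**}}$ rather than rescaling; note also that for (2) and (4) you should rescale $\phi$ rather than $x$, since there the hypothesis depends on $x$ and $\lVert \Phi_{\cdot,\phi}^{*}\rVert\le\lVert\phi\rVert$.
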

	\begin{proof}~
		
		\ben
		\item In virtue of \cite[Lemma~2.3.2]{GM-rose}, the composition $f \circ \alpha$ of a fragmented map $f\colon Y \to Z$ with a continuous map $\alpha\colon X \to Y$ is fragmented.
		Also, applying Fact \ref{fact:adjoint_is_weak_and_weak_star_cont}, $\Phi^{*}_{x, \cdot}$ is (weak-star, weak-star) continuous, and $\Phi^{*}_{x, \cdot}(B_{\A^{**}}) \subseteq r B_{V^{**}}$ for some $r > 0$.
		Using Lemma \ref{lemma:operator_orbit_factor} we conclude that if $\widehat{\Or}_{\phi}^{(l)}\colon B_{V^{**}} \to \A^{*}$ is (weak-star, norm) fragmented then so is $\widehat{\Or}_{\phi}^{(l)}\colon rB_{V^{**}} \to \A^{*}$ and thus the composition $\Or_{x, \phi}^{(l)} = \widehat{\Or}_{\phi}^{(l)} \circ \Phi_{x, \cdot}^{*}$ is also (weak-star, norm) fragmented.
		By Theorem \ref{thm:asplund_functionals}, this implies that $\Phi_{x, \phi}$ is right Asplund.
		\item Analogous.
		\item Adjoint maps are bounded, so there exists some $r > 0$ such that $\Phi_{x, \cdot}^{*}(B_{\A^{**}}) \subseteq r B_{V^{**}}$.
		Also, in virtue of Lemma \ref{lemma:operator_orbit_factor} we know that $\Or_{x, \phi}^{(l)} = \widehat{\Or}_{\phi}^{(l)} \circ \Phi_{x, \cdot}^{*}$.
		Therefore:
		$$
		\Or_{x, \phi}^{(l)}(B_{\A^{**}}) = 
		\widehat{\Or}_{\phi}^{(l)} \left( \Phi_{x, \cdot}^{*}(B_{\A^{**}})\right) \subseteq r \widehat{\Or}_{\phi}^{(l)}(B_{V^{**}}).
		$$
		As a consequence, $\widehat{\Or}_{\phi}^{(l)}(B_{V^{**}})$ being separable (resp. co-tame) implies that $\Or_{x, \phi}^{(l)}(B_{\A^{**}})$ is separable (resp. co-tame).
		Now, by Lemma \ref{lemma:separable_implies_asplund} (resp. Theorem \ref{thm:left_tame_right_cotame}), this is equivalent to $\Phi_{x, \phi}$ being right Asplund (resp. tame).
		\item Analogous.
		\een
	\end{proof}
	
	\begin{cor}
		If $V^{*}$ is separable, and $\A \subseteq L(V)$ is some Banach algebra, then every matrix coefficient $\Phi_{x, \phi}$ for $x \in V, \phi \in V^{*}$ is left Asplund.
	\end{cor}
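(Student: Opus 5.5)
Since $\Phi_{x,\phi} = \Phi_{\cdot,\phi}(x)$, I would prove the corollary through the fourth item of Proposition~\ref{prop:sufficient_conditions_for_matrix_coefficients}, in its separable form. It then suffices to show that $\widehat{\Or}_{x}^{(r)}(B_{V^{*}}) \subseteq \A^{*}$ is norm separable. If that holds, the proposition gives that $\Phi_{x,\phi}$ is left Asplund for every $\phi \in V^{*}$.

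\emph{Step 1: $\widehat{\Or}_{x}^{(r)}$ is a bounded linear map $V^{*} \to \A^{*}$.} For $\theta \in V^{*}$ and $b \in \A$ we have
$$
\left\lvert \left\langle \widehat{\Or}_{x}^{(r)}(\theta), b \right\rangle \right\rvert = \lvert \langle \theta, bx \rangle \rvert \leq \lVert \theta \rVert \lVert b \rVert \lVert x \rVert .
$$
Hence $\lVert \widehat{\Or}_{x}^{(r)} \rVert \leq \lVert x \rVert$, and linearity in $\theta$ is immediate. So $\widehat{\Or}_{x}^{(r)}$ is norm-to-norm continuous.

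\emph{Step 2: norm separability of the image.} By hypothesis $V^{*}$ is norm separable, so its subset $B_{V^{*}}$ is norm separable as well. A norm-continuous image of a separable metric space is separable. Therefore $\widehat{\Or}_{x}^{(r)}(B_{V^{*}})$ is norm separable in $\A^{*}$. Proposition~\ref{prop:sufficient_conditions_for_matrix_coefficients}(4) then yields that $\Phi_{x,\phi}$ is left Asplund.

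The only point needing care is that item (4) relies on the left-hand analogue of Lemma~\ref{lemma:separable_implies_asplund}. That analogue is exactly part~(2) of that lemma: if $\varphi \circledast B_{\A^{**}}$ is separable, then $\varphi$ is left Asplund. The argument passes through the inclusion $\Or_{x,\phi}^{(r)}(B_{\A^{**}}) \subseteq r\, \widehat{\Or}_{x}^{(r)}(B_{V^{*}})$, which comes from the factorization in Lemma~\ref{lemma:operator_orbit_factor}. A subset of a separable metric space is separable, so the inclusion transfers separability without difficulty. No step is a genuine obstacle; the corollary is a direct specialization of the proposition.
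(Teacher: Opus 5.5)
Your proposal is correct and matches the argument the paper intends. The paper states the corollary without a separate proof, as an immediate consequence of item (4) of Proposition~\ref{prop:sufficient_conditions_for_matrix_coefficients}, and separability of $\widehat{\Or}_{x}^{(r)}(B_{V^{*}})$ follows exactly as you say, from $\widehat{\Or}_{x}^{(r)}$ being a bounded linear map on the separable space $V^{*}$; your unpacking of item (4) through the inclusion $\Or_{x,\phi}^{(r)}(B_{\A^{**}}) \subseteq r\,\widehat{\Or}_{x}^{(r)}(B_{V^{*}})$ and part (2) of Lemma~\ref{lemma:separable_implies_asplund} is also accurate.
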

	\begin{ex}
		Let $V = c_{0}$ be the space of sequences converging to $0$ and $\A := L(V)$ be the Banach algebra of bounded operators of $V$.
		Then in virtue of the previous corollary, for every $x \in V, \phi \in V^{*}$ we get that $\Phi_{x, \phi}$ is left Asplund.
	\end{ex}
	
	\subsection{Group Representation}
	In this subsection we will apply the previous proposition to the case of topological group representation on Banach spaces, and their relationship with matrix coefficient.
	It is known that every weakly almost periodic, Asplund or tame function on a topological group can be represented as matrix coefficients coming from a representation on reflexive, Asplund or Rosenthal spaces respectively \cite{Me-Frag04, GM-rose}.
	We will focus on a different perspective, showing some sufficient conditions for a function coming from some strongly continuous Banach representation to be Asplund/tame.
	
	Let $G$ be a topological group.
	If $f \in \R^{G}$ and $g \in G$, we consider the right translation $f \cdot g \in \R^{G}$ defined via:
	$$
	\forall h \in G: (f \cdot g)(h) := f(gh).
	$$
	
	\begin{defin}
		A \emph{strongly continuous representation} of $G$ on the Banach space $V$ is a strongly continuous co-homomorphism $\alpha\colon G \to \Iso(V)$.
		In this case we will write $x \cdot g =(\alpha(g))(x)$.
	\end{defin}
	
	Let $\alpha\colon G \to \Iso(V)$ be a strongly continuous action.
	Write $\A := \overline{\spn}\ \alpha(G) \leq L(V)$.
	For every $x \in V, \phi \in V^{*}$ the matrix coefficients $\Phi_{x, \phi} \in \A^{*}$ induce a continuous function $f_{x, \phi} \in C_{b}(G)$ defined via:
	$$
	f_{x, \phi}(g) := \langle \Phi_{x, \phi}, \alpha(g)\rangle = \langle \phi, (\alpha(g))(x)\rangle = \langle \phi, x\cdot g\rangle.
	$$
	\begin{lemma}\label{lemma:matrix_coefficients_on_representation}
		For every $x \in V$, $\phi \in V^{*}$ and $g \in G$ we have:
		$$
		f_{x, \phi} \cdot g = f_{x\cdot g, \phi}.
		$$
	\end{lemma}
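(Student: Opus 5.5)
The identity is proved by evaluating both sides at an arbitrary $h \in G$ and unwinding the definitions. On the left, the definition of right translation gives $(f_{x,\phi}\cdot g)(h) = f_{x,\phi}(gh) = \langle \phi, x\cdot(gh)\rangle$. On the right, $f_{x\cdot g,\phi}(h) = \langle \phi, (x\cdot g)\cdot h\rangle$.

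It therefore remains to show that $x\cdot(gh) = (x\cdot g)\cdot h$. This is where the \emph{co-homomorphism} property of $\alpha$ enters, and it is the only step that needs care. Since $\alpha(gh) = \alpha(h)\alpha(g)$, we get
$$
x\cdot(gh) = \alpha(gh)(x) = \alpha(h)\bigl(\alpha(g)(x)\bigr) = (x\cdot g)\cdot h,
$$
so the notation $x\cdot g$ really is a right action. The convention $(f\cdot g)(h) = f(gh)$ is chosen to match exactly this order.

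Equivalently, one can phrase the argument through the algebra: $\alpha(gh) = \alpha(h)\alpha(g)$ in $\A$, so $f_{x,\phi}(gh) = \langle \phi, \alpha(h)\alpha(g)x\rangle = \langle \Phi_{\alpha(g)x,\phi}, \alpha(h)\rangle$. This is the first identity of Lemma \ref{lemma:algebra_of_bounded_operators_on_matrix_coefficients} with $a = \alpha(g)$, namely $\alpha(g)\circledast\Phi_{x,\phi} = \Phi_{\alpha(g)x,\phi}$.

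Since $h$ is arbitrary, the two functions agree, which proves the lemma. No real obstacle is expected beyond keeping the order of composition straight.
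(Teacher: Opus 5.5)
Your proof is correct and matches the paper's: evaluate both sides at $h \in G$ and use $x\cdot(gh) = (x\cdot g)\cdot h$, which holds because $\alpha$ is a co-homomorphism. The paper leaves that last step implicit, whereas you spell it out; your alternative phrasing through Lemma~\ref{lemma:algebra_of_bounded_operators_on_matrix_coefficients} is also valid.
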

	\begin{proof}
		Indeed, for every $h \in G$:
		\begin{align*}
			(f_{x, \phi} \cdot g)(h) & =
			f_{x, \phi}(gh) =
			\langle \phi, x \cdot (gh)\rangle =
			\langle \phi, (x \cdot g) \cdot h\rangle =
			f_{x\cdot g, \phi}(h).
		\end{align*}
	\end{proof}
	Write $\RUC(G)$ for the space of bounded, right uniformly continuous functions on $G$.
	\begin{lemma}
		For every $x \in V$, $\phi\in V^{*}$ we have $f_{x, \phi} \in \RUC(G)$.
	\end{lemma}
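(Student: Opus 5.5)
Boundedness is immediate: since $\alpha(g)$ is an isometry,
$$
\lvert f_{x,\phi}(g)\rvert = \lvert\langle \phi, x\cdot g\rangle\rvert \leq \lVert \phi\rVert \lVert x\rVert
$$
for every $g \in G$. The real content is right uniform continuity. With the paper's convention $(f\cdot g)(h) = f(gh)$, this means that the map $G \to (\R^{G}, \lVert\cdot\rVert_\infty)$, $g \mapsto f_{x,\phi}\cdot g$, is norm continuous at the identity $e$; equivalently, for every $\eps>0$ there is a neighbourhood $U$ of $e$ with $\sup_{h\in G}\lvert f(uh)-f(h)\rvert<\eps$ for all $u\in U$. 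If the paper's convention is the mirrored one, the same argument applies with the roles of left and right swapped. I plan to reduce everything to strong continuity of $\alpha$ via Lemma \ref{lemma:matrix_coefficients_on_representation}.

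By Lemma \ref{lemma:matrix_coefficients_on_representation}, $f_{x,\phi}\cdot g = f_{x\cdot g,\phi}$, and $f_{y,\phi}$ is linear in $y$. Hence, for every $g \in G$,
$$
\lVert f_{x,\phi}\cdot g - f_{x,\phi}\rVert_\infty = \lVert f_{x\cdot g - x,\phi}\rVert_\infty = \sup_{h\in G}\lvert\langle \phi, (x\cdot g - x)\cdot h\rangle\rvert \leq \lVert\phi\rVert\,\lVert x\cdot g - x\rVert,
$$
where the last inequality again uses that each $\alpha(h)$ is an isometry.

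Strong continuity of $\alpha$ at $e$ gives $\lVert x\cdot g - x\rVert \to 0$ as $g\to e$. Choosing $U$ with $\lVert x\cdot u - x\rVert < \eps/(\lVert\phi\rVert+1)$ for $u\in U$ then yields the required uniform estimate. For continuity at an arbitrary point $g_0$, rather than only at $e$, write $g = g_0 u$ or $g = u g_0$ as the convention dictates, and use the co-homomorphism identity $x\cdot(gh) = (x\cdot g)\cdot h$ together with the isometry property once more.

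The main obstacle is only bookkeeping: matching the paper's definition of $\RUC(G)$, including the side of the translation, with the order of composition imposed by the co-homomorphism. Isometry makes the estimate uniform in $h$, so there is no analytic difficulty. If the paper's $\RUC$ convention turned out to require the opposite-side translation, I would first check whether the isometry bound still gives uniformity. Failing that, I would instead bound $\lvert f(hu)-f(h)\rvert = \lvert\langle \phi\cdot \alpha(h)^{*}, x\cdot u - x\rangle\rvert$, whose dual side is controlled uniformly in $h$ because $\lVert \phi\circ\alpha(h)\rVert = \lVert \phi \rVert$.
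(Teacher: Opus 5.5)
Your argument is correct and is essentially the paper's proof. Both use Lemma~\ref{lemma:matrix_coefficients_on_representation}, the co-homomorphism identity and the fact that each $\alpha(s)$ is an isometry to bound $\lVert f_{x,\phi}\cdot g - f_{x,\phi}\cdot h\rVert$ by $\lVert\phi\rVert\,\lVert x\cdot gh^{-1} - x\rVert$, then conclude by strong continuity at $e$; the paper's condition $gh^{-1}\in U$ is exactly your case $g = ug_{0}$. Your fallback paragraph is therefore unnecessary, and it would not work anyway: $\langle \phi\circ\alpha(h), x\cdot u - x\rangle$ equals $f_{x,\phi}(uh)-f_{x,\phi}(h)$, not $f_{x,\phi}(hu)-f_{x,\phi}(h)$.
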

	\begin{proof}
		Let $\eps > 0$.
		If $\phi = 0$, then $f_{x, \phi} \equiv 0$ and there is nothing to prove.
		We need to find some neighborhood $e \in U \subseteq G$ such that:
		$$
		\forall g, h \in G: \left( gh^{-1} \in U \Rightarrow \lVert f_{x, \phi} \cdot g - f_{x, \phi} \cdot h \rVert < \eps \right).
		$$
		Since $\alpha$ is strongly continuous, we can find some neighborhood $e \in U \subseteq G$ such that:
		$$
		\forall g \in U: \lVert (\alpha(g) - id)(x)\rVert = \lVert x \cdot g - x\rVert < \frac{1}{\lVert \phi \rVert} \eps.
		$$
		Now, if $gh^{-1} \in U$ then:
		\begin{align*}
			\lVert f_{x, \phi} \cdot g - f_{x, \phi} \cdot h\rVert & 
			\underset{\ref{lemma:matrix_coefficients_on_representation}}{=}
			\lVert f_{x \cdot g, \phi} - f_{x \cdot h, \phi} \rVert\\
			& = \sup\limits_{s \in G} \lvert f_{x \cdot g, \phi}(s) - f_{x \cdot h, \phi}(s)\rvert\\
			& = \sup\limits_{s \in G} \lvert \langle \phi, (x \cdot g) \cdot s\rangle - \langle \phi, (x \cdot h) \cdot s\rangle\rvert\\
			& = \sup\limits_{s \in G} \lvert \langle \phi, x \cdot g s - x \cdot h s\rangle\rvert\\
			& \leq \sup\limits_{s \in G} \lVert \phi\rVert \lVert x \cdot g s - x \cdot h s\rVert\\
			& \underset{\alpha(G)\subseteq \Iso(V)}{=} 
			\sup\limits_{s \in G} \lVert \phi\rVert \lVert x \cdot g s (s^{-1}h^{-1}) - x \cdot h s (s^{-1}h^{-1})\rVert\\
			& = \sup\limits_{s \in G} \lVert \phi\rVert \lVert x \cdot g h^{-1} - x\rVert\\
			& \leq \lVert \phi\rVert \frac{1}{\lVert \phi \rVert}\eps = \eps.
		\end{align*}
	\end{proof}
	
	It is said that a function $f \in \RUC(G)$ is right weakly almost periodic/Asplund/tame if the orbit $f \cdot G$ is relatively weakly compact/Asplund/tame respectively.
	\begin{prop} \label{prop:sufficient_conditions_for_group_matrix_coefficients}
		If the functional $\Phi_{x, \phi} \in \A^{*}$ is left Asplund (resp. left tame) in $\A^{*}$, then the function $f_{x, \phi} \colon G \to \R$ is right Asplund (resp. right tame) in $\RUC(G)$.
	\end{prop}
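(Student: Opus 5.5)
The idea is to realise the orbit $f_{x,\phi}\cdot G$ as the image of the left orbit $B_{\A}\circledast\Phi_{x,\phi}$ under a bounded linear operator $R\colon \A^{*}\to \ell^{\infty}(G)$, and then apply Fact~\ref{fact:image_of_small_subset}. Since $\alpha(G)\subseteq \Iso(V)$, each $\alpha(g)$ lies in $B_{\A}$. Define
$$
R\colon \A^{*}\to \ell^{\infty}(G),\qquad (R\psi)(h):=\langle \psi,\alpha(h)\rangle .
$$
Then $R$ is linear and $\lVert R\psi\rVert_{\infty}\le \lVert\psi\rVert$, because $\lVert\alpha(h)\rVert=1$. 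By definition $R\Phi_{x,\phi}=f_{x,\phi}$.

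Next I would compute the orbit. By Lemma~\ref{lemma:algebra_of_bounded_operators_on_matrix_coefficients}, $\alpha(g)\circledast\Phi_{x,\phi}=\Phi_{\alpha(g)x,\phi}=\Phi_{x\cdot g,\phi}$. Applying $R$ and Lemma~\ref{lemma:matrix_coefficients_on_representation} gives
$$
R\bigl(\alpha(g)\circledast\Phi_{x,\phi}\bigr)=f_{x\cdot g,\phi}=f_{x,\phi}\cdot g .
$$
Hence
$$
f_{x,\phi}\cdot G = R\bigl(\alpha(G)\circledast\Phi_{x,\phi}\bigr)\subseteq R\bigl(B_{\A}\circledast\Phi_{x,\phi}\bigr).
$$

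If $\Phi_{x,\phi}$ is left Asplund (resp. left tame), then $B_{\A}\circledast\Phi_{x,\phi}$ is an Asplund (resp. tame) subset of $\A^{*}$. Fact~\ref{fact:image_of_small_subset} then shows that its image under $R$ is Asplund (resp. tame) in $\ell^{\infty}(G)$. Both properties pass to subsets, which is immediate from either the fragmentability or the independence formulation in Definition~\ref{defin:asplund_tame_subsets}. So $f_{x,\phi}\cdot G$ is Asplund (resp. tame) in $\ell^{\infty}(G)$.

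The main obstacle is the ambient space: the property is required in $\RUC(G)$, not in $\ell^{\infty}(G)$. The orbit lies in $\RUC(G)$ by the preceding lemma, so I would show that Asplundness and tameness of a bounded set $C\subseteq W$, where $W$ is a closed subspace of $V$, do not depend on whether $C$ is viewed in $W$ or in $V$.

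For tameness this follows from condition (a), which is intrinsic: $\ell^{1}$-sequences are defined by norm inequalities inside the span of $C$. Alternatively, every $\psi\in B_{W^{*}}$ extends by Hahn--Banach to an element of $B_{V^{*}}$, so independence over $B_{W^{*}}$ is the same as independence over $B_{V^{*}}$.

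For Asplundness I would use condition (b). The restriction map $r\colon (B_{V^{*}},\rho_D)\to(B_{W^{*}},\rho_D)$ is onto by Hahn--Banach and $1$-Lipschitz, because for $D\subseteq W$ the seminorm $\rho_D(\varphi-\psi)$ depends only on the restrictions of $\varphi$ and $\psi$ to $W$. Hence separability of $(B_{V^{*}},\rho_D)$ implies separability of $(B_{W^{*}},\rho_D)$.

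Combining these steps gives that $f_{x,\phi}$ is right Asplund (resp. right tame) in $\RUC(G)$.
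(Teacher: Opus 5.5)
Your proof is correct and follows the paper's argument: the restriction map $R\colon \A^{*}\to\ell^{\infty}(G)$, the inclusion $f_{x,\phi}\cdot G\subseteq R(B_{\A}\circledast\Phi_{x,\phi})$ obtained from Lemmas~\ref{lemma:algebra_of_bounded_operators_on_matrix_coefficients} and~\ref{lemma:matrix_coefficients_on_representation}, and then Fact~\ref{fact:image_of_small_subset}. Your Hahn--Banach check that Asplundness and tameness of the orbit are the same whether it is viewed in $\ell^{\infty}(G)$ or in $\RUC(G)$ is correct, and it is a detail the paper leaves implicit.
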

	\begin{proof}
		Consider the restriction map $R\colon \A^{*} \to \ell^{\infty}(G)$, where $\ell^{\infty}(G)$ is the Banach space of bounded functions on $G$ with supremum norm, defined via:
		$$
		\forall \varphi \in \A^{*}, g \in G: (R(\varphi))(g) := \langle \varphi, \alpha(g)\rangle.
		$$
		Note that:
		$$
		\forall \varphi \in \A^{*}: \lVert R(\varphi)\rVert =
		\sup\limits_{g \in G} \lvert (R(\varphi))(g)\rvert =
		\sup\limits_{g \in G} \lvert \langle \varphi, \alpha(g)\rangle\rvert \leq
		\sup\limits_{g \in G} \lVert\varphi\rVert \lVert\alpha(g)\rVert \leq
		\lVert \varphi\rVert,
		$$
		where the last inequality is a consequence of $\alpha(G) \subseteq \Iso(V) \subseteq B_{\A}$.
		In other words, $R$ is a bounded map.
		By definition, $\Phi_{x, \phi}$ is left Asplund (resp. tame) if and only if $B_{\A} \circledast \Phi_{x, \phi}$ is Asplund (resp. tame).
		This property is preserved under bounded linear maps (Fact \ref{fact:image_of_small_subset}), so $R(B_{\A} \circledast \Phi_{x, \phi})$ is also Asplund (resp. tame). 
		Moreover, we claim that $f_{x, \phi} \cdot G \subseteq R(B_{\A} \circledast \Phi_{x, \phi})$, and is therefore also Asplund (resp. tame).
		Indeed, for every $g \in G$:
		\begin{align*}
			f_{x, \phi} \cdot g & \underset{\ref{lemma:matrix_coefficients_on_representation}}{=}
			f_{x \cdot g, \phi}\\
			& = R(\Phi_{x \cdot g, \phi})\\
			& = R(\Phi_{(\alpha(g))x, \phi})\\
			& \underset{\ref{lemma:algebra_of_bounded_operators_on_matrix_coefficients}}{=}
			R(\alpha(g) \circledast \Phi_{x, \phi})\\
			& \in R(B_{\A} \circledast \Phi_{x, \phi}).
		\end{align*}
	\end{proof}
	
	Together with Proposition \ref{prop:sufficient_conditions_for_matrix_coefficients}, we get the following.
	\begin{prop} \label{prop:sufficient_conditions_for_group_function}
		Suppose that $\alpha\colon G \to \Iso(V)$ is a strongly continuous representation of the topological group $G$ on the Banach space $V$.
		Let $\A$ be the Banach algebra spanned by $\alpha(G)$ in $L(V)$ and $x \in V, \phi \in V^{*}$.
		Then:
		\ben
		\item If $\widehat{\Or}_{x}^{(r)}\colon B_{V^{*}} \to \A^{*}$ is (weak-star, norm) fragmented, then $f_{x, \phi} \in \RUC(G)$ is right Asplund.
		\item If $\widehat{\Or}_{x}^{(r)}(B_{V^{*}})$ is separable (resp. co-tame), then $f_{x, \phi}$ is right Asplund (resp. tame).
		\een
	\end{prop}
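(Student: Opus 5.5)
The plan is to chain Proposition~\ref{prop:sufficient_conditions_for_matrix_coefficients} with Proposition~\ref{prop:sufficient_conditions_for_group_matrix_coefficients}. The latter says that if $\Phi_{x,\phi}$ is left Asplund (resp. left tame) in $\A^{*}$, then $f_{x,\phi}$ is right Asplund (resp. right tame) in $\RUC(G)$. So it is enough to show that each hypothesis of the present statement makes $\Phi_{x,\phi}$ left Asplund (resp. left tame).

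For item (1), assume $\widehat{\Or}_{x}^{(r)}\colon B_{V^{*}}\to\A^{*}$ is (weak-star, norm) fragmented. Item (2) of Proposition~\ref{prop:sufficient_conditions_for_matrix_coefficients} then gives that $\Phi_{x,\phi}$ is left Asplund.

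That item was only proved ``analogously'', so I will check it explicitly.
\begin{itemize}
\item By Lemma~\ref{lemma:operator_orbit_factor}, $\Or^{(r)}_{x,\phi}=\widehat{\Or}^{(r)}_{x}\circ\Phi_{\cdot,\phi}^{*}$.
\item The map $\Phi_{\cdot,\phi}^{*}\colon\A^{**}\to V^{*}$ is (weak-star, weak-star) continuous by Fact~\ref{fact:adjoint_is_weak_and_weak_star_cont}, and it sends $B_{\A^{**}}$ into $rB_{V^{*}}$ for some $r>0$.
\item Fragmentability on $B_{V^{*}}$ transfers to $rB_{V^{*}}$ by scaling, using linearity of $\widehat{\Or}^{(r)}_{x}$: fragment on $B_{V^{*}}$ with $\eps/r$ and dilate the open set.
\item A fragmented map precomposed with a continuous map is fragmented \cite[Lemma~2.3.2]{GM-rose}. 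Hence $\Or^{(r)}_{x,\phi}$ is (weak-star, norm) fragmented on $B_{\A^{**}}$.
\item The left analogue of Theorem~\ref{thm:asplund_functionals} then yields that $\Phi_{x,\phi}$ is left Asplund.
\end{itemize}

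For item (2), assume $\widehat{\Or}_{x}^{(r)}(B_{V^{*}})$ is separable (resp. co-tame). The same factorization gives
$$\Phi_{x,\phi}\circledast B_{\A^{**}}=\Or^{(r)}_{x,\phi}(B_{\A^{**}})\subseteq r\,\widehat{\Or}_{x}^{(r)}(B_{V^{*}}).$$
A subset of a scaled separable set is separable, so part (2) of Lemma~\ref{lemma:separable_implies_asplund} gives that $\Phi_{x,\phi}$ is left Asplund. In the co-tame case, the left version of Theorem~\ref{thm:left_tame_right_cotame} gives that $\Phi_{x,\phi}$ is left tame. For that I need $\Phi_{x,\phi}\circledast B_{\A^{**}}$ to be co-tame.

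It is weak-star compact by Corollary~\ref{cor:dual_ball_action_is_weak_star_closure}. Co-tameness passes to weak-star compact subsets and to scalar multiples, since an independent sequence over a subset or a dilate yields one over the original set (after rescaling the thresholds $a<b$). Proposition~\ref{prop:sufficient_conditions_for_group_matrix_coefficients} then finishes both items.

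The main obstacle is bookkeeping rather than substance, and it sits in the co-tame case. Co-tameness is defined for weak-star compact sets, but $r\,\widehat{\Or}_{x}^{(r)}(B_{V^{*}})$ is weak-star compact only if $\widehat{\Or}_{x}^{(r)}$ is (weak-star, weak-star) continuous. That map is the adjoint of $b\mapsto bx$, so it is, and the inclusion argument is legitimate. If one prefers to avoid this point, it suffices to argue directly with independent sequences, which only needs the subset inclusion.
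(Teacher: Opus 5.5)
Your proposal is correct and follows the paper's route: the paper obtains this statement by chaining items (2) and (4) of Proposition~\ref{prop:sufficient_conditions_for_matrix_coefficients}, which make $\Phi_{x,\phi}$ left Asplund (resp.\ left tame), with Proposition~\ref{prop:sufficient_conditions_for_group_matrix_coefficients}. Your explicit checks of the steps the paper only calls ``analogous'' are sound details it leaves implicit: the rescaling from $B_{V^{*}}$ to $rB_{V^{*}}$, and the weak-star compactness of $\widehat{\Or}_{x}^{(r)}(B_{V^{*}})$ as the image of the unit ball under an adjoint map.
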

	\begin{remark}
		Unlike most results in this paper, some extra care needs to be taken in order to obtain analogous left-Asplund and left-tame statements for \ref{prop:sufficient_conditions_for_group_matrix_coefficients} and \ref{prop:sufficient_conditions_for_group_function}.
		Specifically, we would need to consider a left representation $\alpha\colon G \to \Iso(V)$ which is a homomorphism and not co-homomorphism.
	\end{remark}
	
	\begin{q}
		In which cases, if any, do the sufficient conditions of Proposition \ref{prop:sufficient_conditions_for_group_function} become necessary?
	\end{q}
	
	\section{The Banach Algebra $L^{1}(G)$ of a Locally Compact Group}\label{section:group_algebra}
	The goal of this section is to give a concrete description of right Asplund and tame elements of $L^{\infty}(G)$ in terms of orbits under finitely additive $\{0, 1\}$-valued measures. The strategy is to combine Theorem \ref{thm:separable_asplund_iff_separable_arens_orbit} and \ref{thm:left_tame_right_cotame} with the structural description of the bidual via the Yosida–Hewitt theorem.
	
	Let $G$ be a locally compact group and let $\lambda$ be its left Haar measure.
	We will assume that $G$ is $\sigma$-finite and that $\lambda$ is complete.
	Write $\Sigma$ for the appropriate $\sigma$-algebra.
	Throughout this section we will consider the Banach algebra $\A := L^{1}(G)$ whose multiplication is convolution, defined via:
	$$
	\forall a, b \in L^{1}(G), g \in G: (a \star b)(g) := \int\limits_{G} a(h) b(h^{-1}g) d\lambda(h).
	$$
	It is then known that $\A^{*}$ can be identified with $L^{\infty}(G)$ where:
	$$
	\forall a \in L^{1}(G), \varphi \in L^{\infty}(G): \langle \varphi, a \rangle = \int\limits_{G} a(g) \varphi(g) d\lambda(g).
	$$
	
	\begin{remark}
		The results of this section are formulated for right Asplund or tame functionals.
		The derivation of left analogues is possible using exactly the same logic.
	\end{remark}
	
	\begin{f}[Yosida–Hewitt theorem]\cite[Thm.~3.1]{Toland}
		The dual $\A^{**} = \left( L^{\infty}(G)\right)^{*}$ can be isometrically identified with the space $\ba_{\lambda}(G)$ of finitely additive measures on $G$ whose family of null sets contain the null sets of $\lambda$.
	\end{f}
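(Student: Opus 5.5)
The plan is to build the identification directly, using integration against finitely additive measures. Throughout, $\ba_{\lambda}(G)$ carries the total variation norm
$$
\lvert \mu\rvert(G) := \sup \sum_{i} \lvert \mu(E_{i})\rvert,
$$
where the supremum runs over finite partitions $\{E_{i}\}$ of $G$ into sets of $\Sigma$.

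\textbf{Step 1 (integration on simple functions).} Fix $\mu\in\ba_{\lambda}(G)$. For a simple function $s=\sum_{i} c_{i}1_{E_{i}}$ with $E_{i}\in\Sigma$ pairwise disjoint, set
$$
\int s\,d\mu := \sum_{i} c_{i}\mu(E_{i}).
$$
Finite additivity gives independence of the representation, in the usual way by passing to a common refinement. Next I check this is well defined on $L^{\infty}$-classes. If $s=s'$ $\lambda$-a.e., then on each piece of a common refinement either the values agree or the piece is $\lambda$-null. In the second case $\mu$ vanishes there, because $\mu$ kills all $\lambda$-null sets.

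For the bound, take the partition $E_{i}$ to be $\lambda$-non-null (null pieces contribute nothing). Then
$$
\Bigl\lvert \int s\,d\mu\Bigr\rvert \le \max_{i}\lvert c_{i}\rvert\,\sum_{i}\lvert\mu(E_{i})\rvert \le \lVert s\rVert_{\infty}\,\lvert\mu\rvert(G).
$$

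\textbf{Step 2 (extension and isometry).} Simple functions are norm dense in $L^{\infty}(G)$: take a representative bounded by $\lVert\varphi\rVert_{\infty}$ and partition its range into intervals of length $\eps$. So $s\mapsto\int s\,d\mu$ extends uniquely to a functional $\Phi(\mu)\in L^{\infty}(G)^{*}$ with $\lVert\Phi(\mu)\rVert\le\lvert\mu\rvert(G)$.

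For the reverse inequality, take a partition $\{E_{i}\}$ nearly attaining $\lvert\mu\rvert(G)$. Test against
$$
s=\sum_{i}\operatorname{sgn}(\mu(E_{i}))\,1_{E_{i}},
$$
which has $\lVert s\rVert_{\infty}\le 1$ and
$$
\int s\,d\mu=\sum_{i}\lvert\mu(E_{i})\rvert.
$$
Hence $\Phi$ is a linear isometry from $\ba_{\lambda}(G)$ into $L^{\infty}(G)^{*}$.

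\textbf{Step 3 (surjectivity).} Given $F\in L^{\infty}(G)^{*}$, define $\mu_{F}(E):=F(1_{E})$ for $E\in\Sigma$. I verify three properties.
\begin{itemize}
\item[(a)] Finite additivity follows from linearity of $F$.
\item[(b)] $\mu_{F}$ vanishes on $\lambda$-null sets, since then $1_{E}=0$ in $L^{\infty}$.
\item[(c)] $\mu_{F}$ has bounded variation: $\sum_{i}\lvert\mu_{F}(E_{i})\rvert=F\bigl(\sum_{i}\pm 1_{E_{i}}\bigr)\le\lVert F\rVert$.
\end{itemize}
So $\mu_{F}\in\ba_{\lambda}(G)$. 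By construction $\Phi(\mu_{F})$ and $F$ agree on simple functions. By density and continuity they agree everywhere, so $\Phi$ is onto. This also shows that the pairing $\langle\mu,\varphi\rangle=\int\varphi\,d\mu$ is the one used later.

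\textbf{Main obstacle.} The delicate point is the bookkeeping in Steps 1–2 for finitely additive measures. There is no countable additivity, so everything must be done with finite partitions and uniform approximation, never with monotone or dominated limits. One must also make sure that passing to $\lambda$-a.e. classes interacts correctly with the total variation norm. This is exactly where the hypothesis that the null sets of $\mu$ contain those of $\lambda$ is used, and why the target is $\ba_{\lambda}(G)$ rather than all of $\ba(G)$. $\sigma$-finiteness and completeness of $\lambda$ are not really needed beyond making $\Sigma$ and the null ideal well behaved.
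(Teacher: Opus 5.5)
Your proof is correct. It is the standard argument for this representation: integrate simple functions, use that $\mu$ vanishes on $\lambda$-null sets both to pass to $L^{\infty}$-classes and to get $\lvert\int s\,d\mu\rvert\le\lVert s\rVert_{\infty}\lvert\mu\rvert(G)$, extend by density, test against sign-simple functions for the isometry, and recover surjectivity from $\mu_{F}(E)=F(1_{E})$, with boundedness of $\mu$ correctly taken as part of the definition of the space. The paper gives no argument of its own here and only cites Toland for this fact, so your self-contained proof is exactly what that citation stands in for.
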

	
	\begin{defin}\cite[Section~5]{Toland}
		Write 
		$$
		\mathfrak{G} := \{\omega \in \ba_{\lambda}(G)\mid \omega(G) = 1 \text{ and } \forall A \in \Sigma: \omega(A) \in \{0, 1\} \}.
		$$
	\end{defin}
	In virtue of the Yosida-Hewitt theorem, we can consider $\mathfrak{G}$ as a subset of $\A^{**} = \left(L^{\infty}(G)\right)^{*}$.
	Therefore, we can furnish it with the weak-star topology of $\A^{**}$.
	\begin{f}\label{fact:properties_of_G_compactification}~\ben
		\item $\mathfrak{G}$ is a weak-star closed (hence compact) subset of $\A^{**}$ \cite[Lemma~7.13]{Toland}.
		\item $L^{\infty}(G)$ and $C(\mathfrak{G})$ are isometrically isomorphic \cite[Thm.~7.4 and Lemma~7.12]{Toland}.
		\item $\mathfrak{G} \cup \left(-\mathfrak{G}\right) = \ext(B_{\ba_{\lambda}(G)})$ \cite[Lemma~7.14]{Toland}.
		\een
	\end{f}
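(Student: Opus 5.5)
The plan is to treat $L^{\infty}(G)$ as a real commutative unital Banach algebra under pointwise multiplication. We then identify $\mathfrak{G}$ with its set of characters, i.e.\ its Gelfand spectrum, and derive all three items from that identification.

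\emph{Step 1: characters.} We first show that $\omega\in\ba_{\lambda}(G)$ lies in $\mathfrak{G}$ if and only if, viewed as a functional on $L^{\infty}(G)$, it satisfies $\omega(1_{G})=1$ and $\omega(fg)=\omega(f)\omega(g)$ for all $f,g\in L^{\infty}(G)$.
\begin{itemize}
\item If $\omega$ is multiplicative, then $\omega(A)=\omega(1_{A}^{2})=\omega(A)^{2}$, so $\omega(A)\in\{0,1\}$.
\item Conversely, suppose $\omega$ is $\{0,1\}$-valued and finitely additive. Then $\omega(A\cap B)=\min(\omega(A),\omega(B))=\omega(A)\omega(B)$, since $\omega(A\cap B)\le\omega(A)$ and $\omega(A)=\omega(B)=1$ forces $\omega(A\cap B)=1$ (otherwise $\omega(A\cup B)\ge 2$ by inclusion–exclusion). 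So $\omega$ is multiplicative on indicators, hence on simple functions by bilinearity. It extends to all of $L^{\infty}(G)$ by norm density of simple functions and norm continuity.
\end{itemize}
Such $\omega$ are positive with $\omega(1_{G})=1$, so $\lVert\omega\rVert=1$. The conditions $\omega(1_{G})=1$ and $\omega(fg)=\omega(f)\omega(g)$ are each weak-star closed, being conditions on finitely many evaluations. Hence $\mathfrak{G}$ is a weak-star closed subset of $B_{\A^{**}}$, which gives item (1) via Banach–Alaoglu.

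\emph{Step 2: the Gelfand map is an isometric isomorphism.} Define $\Gamma\colon L^{\infty}(G)\to C(\mathfrak{G})$ by $\Gamma(f)(\omega):=\omega(f)$. This map is continuous on $\mathfrak{G}$ by definition of the weak-star topology, and it is a unital algebra homomorphism with $\lVert\Gamma f\rVert\le\lVert f\rVert_{\infty}$.

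For the reverse inequality, fix $\eps>0$. We may assume $A:=\{f>\lVert f\rVert_{\infty}-\eps\}$ has positive measure, replacing $f$ by $-f$ if necessary. The key construction is to extend $\{B\in\Sigma: B\supseteq A\}$ to a maximal filter $\mathcal{U}$ in the Boolean algebra $\Sigma/\{\lambda\text{-null sets}\}$, using Zorn's lemma; $\mathcal{U}$ consists of non-null sets. Setting $\omega(B)=1$ iff $B\in\mathcal{U}$ gives a finitely additive, $\{0,1\}$-valued measure that vanishes on null sets, with $\omega(A)=1$. Positivity then gives $\omega(f)\ge\lVert f\rVert_{\infty}-\eps$. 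Hence $\Gamma$ is an isometry.

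Its image is therefore a closed unital subalgebra of $C(\mathfrak{G})$. It separates points: if $\omega\neq\omega'$, they differ on some $1_{A}$. The real Stone–Weierstrass theorem then gives surjectivity, proving item (2).

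\emph{Step 3: extreme points.} Transposing $\Gamma$ identifies $\ba_{\lambda}(G)=L^{\infty}(G)^{*}$ isometrically with $C(\mathfrak{G})^{*}$. By the Riesz representation theorem the latter is the space $M(\mathfrak{G})$ of regular Borel measures. The extreme points of $B_{M(\mathfrak{G})}$ are exactly the signed Dirac masses $\pm\delta_{\omega}$, by the standard Arens–Kelley argument. Under the identification, $\delta_{\omega}$ corresponds to $f\mapsto\Gamma(f)(\omega)=\omega(f)$, i.e.\ to $\omega$ itself. Thus $\ext(B_{\ba_{\lambda}(G)})=\mathfrak{G}\cup(-\mathfrak{G})$, which is item (3).

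I expect the main obstacle to be Step 2's ultrafilter construction. One must work modulo null sets so that the resulting $\omega$ genuinely lies in $\ba_{\lambda}(G)$ and still charges $A$; maximality is needed to guarantee that for every $B$ exactly one of $B$ and $G\setminus B$ lies in $\mathcal{U}$. The remaining steps are routine once this is in place.
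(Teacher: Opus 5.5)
Your proof is correct. It is not really comparable to the paper's argument, because the paper gives no proof of this Fact: it imports the three items from Toland's monograph (Lemma~7.13, Theorem~7.4 with Lemma~7.12, and Lemma~7.14). You instead derive everything from the Gelfand theory of the commutative unital algebra $L^{\infty}(G)$:
\begin{itemize}
\item Step~1 identifies $\mathfrak{G}$ with the set of unital multiplicative functionals, which makes weak-star closedness immediate. You are implicitly using Yosida--Hewitt here: every element of $\A^{**}$ is some $\omega\in\ba_{\lambda}(G)$, so the weak-star closed set of characters is exactly $\mathfrak{G}$. It is worth stating this explicitly.
\item Step~2 gets the isometry from an ultrafilter in the measure algebra $\Sigma/\{\lambda\text{-null sets}\}$, and surjectivity from real Stone--Weierstrass.
\item Step~3 transports the Arens--Kelley description of $\ext(B_{M(\mathfrak{G})})$ along the isometry $\Gamma^{*}$, which sends $\delta_{\omega}$ to $\omega$.
\end{itemize}

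What your route buys is transparency. It shows that $\mathfrak{G}$, with the weak-star topology the paper puts on it, is exactly the Gelfand spectrum of $L^{\infty}(G)$. It isolates the one genuinely measure-theoretic step, the ultrafilter construction modulo null sets, which is what makes $\mathfrak{G}$ large enough to norm $L^{\infty}(G)$. It also makes clear that item (3), which is what feeds Krein--Milman in Corollary~\ref{cor:compactification_as_extreme_points}, follows formally from item (2) together with Riesz and Arens--Kelley.

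The citation route is shorter and hands the measure-theoretic bookkeeping to Toland. Both approaches still rely on the isometric Yosida--Hewitt identification of $\ba_{\lambda}(G)$, with the total-variation norm, with $L^{\infty}(G)^{*}$.
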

	Combining the last property with the Krein–Milman theorem \cite[p.~II.55, Thm.~1]{bourbTVS}, we get the following corollary.
	\begin{cor} \label{cor:compactification_as_extreme_points}
		$$
		\overline{\acx}^{w^{*}} \mathfrak{G} = B_{\A^{**}}.
		$$
	\end{cor}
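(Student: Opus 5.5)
The statement to prove is Corollary~\ref{cor:compactification_as_extreme_points}: $\overline{\acx}^{w^{*}} \mathfrak{G} = B_{\A^{**}}$. The plan is to apply the Krein--Milman theorem to the weak-star compact convex set $B_{\A^{**}}$ and then identify the result with the closed absolutely convex hull of $\mathfrak{G}$ using Fact~\ref{fact:properties_of_G_compactification}.

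First, the inclusion $\overline{\acx}^{w^{*}} \mathfrak{G} \subseteq B_{\A^{**}}$. Each $\omega \in \mathfrak{G}$ is a finitely additive probability measure, so its total variation is $\omega(G) = 1$. By the Yosida--Hewitt identification, which is isometric, $\lVert \omega \rVert = 1$ and $\mathfrak{G} \subseteq B_{\A^{**}}$. The ball is absolutely convex, and by Banach--Alaoglu it is weak-star compact, hence weak-star closed. It therefore contains the weak-star closed absolutely convex hull of $\mathfrak{G}$.

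Next, the reverse inclusion. The space $(\A^{**}, w^{*})$ is a Hausdorff locally convex space and $B_{\A^{**}}$ is a compact convex subset. By Krein--Milman,
$$
B_{\A^{**}} = \overline{\co}^{w^{*}} \ext(B_{\A^{**}}).
$$
Under the isometric identification $\A^{**} \cong \ba_{\lambda}(G)$, the ball $B_{\A^{**}}$ corresponds to $B_{\ba_{\lambda}(G)}$. Its extreme points are therefore $\mathfrak{G} \cup (-\mathfrak{G})$ by the third item of Fact~\ref{fact:properties_of_G_compactification}. Since $\co(\mathfrak{G} \cup (-\mathfrak{G})) \subseteq \acx \mathfrak{G}$, we get
$$
B_{\A^{**}} = \overline{\co}^{w^{*}}\left(\mathfrak{G} \cup (-\mathfrak{G})\right) \subseteq \overline{\acx}^{w^{*}} \mathfrak{G}.
$$

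There is no real obstacle here. The only point needing care is that the weak-star topology on $\A^{**}$ is indeed the topology in which Krein--Milman is applied and in which the closure in the statement is taken. This holds because $\A^{**} = (L^{\infty}(G))^{*}$, so $w^{*}$ is exactly the $\sigma(\ba_{\lambda}(G), L^{\infty}(G))$ topology.
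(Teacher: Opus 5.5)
Your proposal is correct and follows the paper's route: Krein--Milman applied to the weak-star compact ball $B_{\A^{**}}$, combined with the identification $\ext(B_{\ba_{\lambda}(G)}) = \mathfrak{G} \cup (-\mathfrak{G})$ from Fact~\ref{fact:properties_of_G_compactification}. You also spell out the easy inclusion $\overline{\acx}^{w^{*}} \mathfrak{G} \subseteq B_{\A^{**}}$, which the paper leaves implicit.
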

	
	\begin{f}\cite[Section~5.1]{Toland}\label{fact:stone_cech_compactification}
		If $G$ is discrete, then $\mathfrak{G}$ is the same as the Stone-\v{C}ech compactification $\beta G$ of $G$.
	\end{f}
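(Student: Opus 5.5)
Since $G$ is discrete, we take $\lambda$ to be counting measure. Then the only $\lambda$-null set is $\emptyset$, $\Sigma$ is the full power set $\mathcal{P}(G)$, and $\ba_{\lambda}(G)=\ba(G)$ consists of all bounded finitely additive measures on $\mathcal{P}(G)$. The plan is to verify directly that $\mathfrak{G}$, with the weak-star topology, is a compact Hausdorff space that contains $G$ as a dense subset, and that every bounded real function on $G$ extends continuously to it. The universal property of $\beta G$ then identifies the two spaces.

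\textbf{Step 1: the embedding.} Send $g\in G$ to the Dirac measure $\delta_g\in\mathfrak{G}$. This map is injective. Each element of $\mathfrak{G}$ is a $\{0,1\}$-valued finitely additive probability on $\mathcal{P}(G)$, which is the same as an ultrafilter $\mathcal{U}_\omega=\{A:\omega(A)=1\}$.

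\textbf{Step 2: the topology.} The weak-star topology on $\mathfrak{G}$ is generated by the maps $\omega\mapsto\langle\omega,f\rangle$ for $f\in\ell^{\infty}(G)=L^{\infty}(G)$. Simple functions are norm dense in $\ell^{\infty}(G)$, and $\mathfrak{G}\subseteq B_{\A^{**}}$ is bounded. Hence on $\mathfrak{G}$ the topology is already generated by the maps $\omega\mapsto\omega(A)$, $A\subseteq G$. Each of these maps is $\{0,1\}$-valued and continuous, so the sets $\{\omega:\omega(A)=1\}$ form a clopen base. This is exactly the Stone topology on the space of ultrafilters of $\mathcal{P}(G)$.

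\textbf{Step 3: compactness and density.} Compactness of $\mathfrak{G}$ is Fact~\ref{fact:properties_of_G_compactification}(1); Hausdorffness is clear since the weak-star topology is Hausdorff. For density of $G$, take a basic neighbourhood $\{\omega:\omega(A)=1\}$ of some $\omega$. Then $A\neq\emptyset$, and any $g\in A$ gives $\delta_g$ in that neighbourhood. Moreover each $\{\delta_g\}=\{\omega:\omega(\{g\})=1\}$ is open, so $G$ carries its discrete topology inside $\mathfrak{G}$.

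\textbf{Step 4: extension property.} Every $f\in C_b(G)=\ell^{\infty}(G)$ extends to the weak-star continuous function $\hat f(\omega):=\langle\omega,f\rangle$ on $\mathfrak{G}$, and $\hat f(\delta_g)=f(g)$. This is consistent with Fact~\ref{fact:properties_of_G_compactification}(2). A compact Hausdorff space containing a Tychonoff space $X$ densely, to which every bounded continuous real function on $X$ extends continuously, is $\beta X$. Hence $\mathfrak{G}\cong\beta G$, with the identification fixing $G$.

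The main point to check carefully is Step 2, namely that the weak-star topology restricted to $\mathfrak{G}$ is generated by indicator functions. This rests on the uniform boundedness of $\mathfrak{G}$ together with norm density of simple functions in $\ell^{\infty}(G)$. Together they give that, for a net $\omega_\alpha$ in $\mathfrak{G}$, convergence $\omega_\alpha(A)\to\omega(A)$ for all $A\subseteq G$ implies $\langle\omega_\alpha,f\rangle\to\langle\omega,f\rangle$ for all $f\in\ell^{\infty}(G)$, by a standard $\eps/3$ argument.
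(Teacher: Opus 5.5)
Your proof is correct. It is necessarily a different route, because the paper gives no argument for this statement: it is quoted as a fact from \cite[Section~5.1]{Toland}. Your proposal is therefore a self-contained replacement for a citation.

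Your outline is the standard one, and each step holds:
\begin{enumerate}
\item For counting measure, $\Sigma=\mathcal{P}(G)$ and $\ba_{\lambda}(G)=\ba(G)$, and $\mathfrak{G}$ is identified with the set of ultrafilters on $G$.
\item On the norm-bounded set $\mathfrak{G}$, the weak-star topology is determined by the indicators $1_A$, so it is the Stone topology.
\item The Dirac measures $\delta_g$ form a dense, open, discrete copy of $G$.
\item You conclude with the characterization of $\beta G$ as the compactification in which $G$ is $C^{*}$-embedded, using $\hat f(\omega)=\langle\omega,f\rangle$.
\end{enumerate}

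Two small points are worth making explicit.
\begin{itemize}
\item The sets $\{\omega:\omega(A)=1\}$ form a base, not merely a subbase. This is because $\{\omega(A)=1\}\cap\{\omega(B)=1\}=\{\omega(A\cap B)=1\}$, which is the ultrafilter property. The remaining subbasic sets are covered by $\{\omega(A)=0\}=\{\omega(G\setminus A)=1\}$.
\item Compactness does not need the cited Fact~\ref{fact:properties_of_G_compactification}(1). Every element of $\mathfrak{G}$ is nonnegative with total mass $1$, so $\mathfrak{G}\subseteq B_{\A^{**}}$. Inside $B_{\A^{**}}$, $\mathfrak{G}$ is cut out by the weak-star closed conditions $\langle\omega,1_G\rangle=1$ and $\langle\omega,1_A\rangle\in\{0,1\}$. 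Compactness then follows from Banach--Alaoglu.
\end{itemize}
Your argument also never uses the standing $\sigma$-finiteness (countability) assumption, so it works for every discrete group.

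What the citation buys is brevity. What your version buys is an explicit dictionary that the paper uses later without comment. In the proof of Fact~\ref{fact:asplund_subsets_equivalence_new}, the author writes $X-n\in\omega$ for $\omega(X-n)=1$, treating $\omega\in\mathfrak{G}$ as an ultrafilter on $\Z$. The translation computations, as in Lemma~\ref{lemma:characteristic_orbit}, rest on the same identification. This is precisely your Steps~1 and~3.
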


	\begin{f}\cite[p.~223]{Folland} \label{fact:second_countable_groups}
		If $G$ is a second-countable locally compact group, then $L^{1}(G)$ is separable.
	\end{f}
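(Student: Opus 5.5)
The statement is a standard measure-theoretic fact. The plan is to exhibit an explicit countable subset of $L^{1}(G)$ whose linear span over $\mathbb{Q}$ is dense. The idea is to approximate integrable functions by simple functions, and simple functions by indicators of finite unions of basic open sets.

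\emph{Step 1: a good countable base.} Since $G$ is locally compact, Hausdorff and second countable, we choose a countable base $\mathcal{U}=\{U_{k}\}_{k\in\N}$ consisting of relatively compact open sets. Every open set is a countable union of members of $\mathcal{U}$. Each $U_{k}$ has compact closure, so $\lambda(U_{k})<\infty$.

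\emph{Step 2: regularity.} We recall that in a locally compact Hausdorff space in which every open set is $\sigma$-compact, every Borel measure that is finite on compact sets is regular (Folland, Thm.~7.8). A second countable locally compact space has this property, because every open set is a countable union of compact closures $\overline{U_{k}}$. Hence the Haar measure $\lambda$ is outer regular on all Borel sets.

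Now let $A\in\Sigma$ with $\lambda(A)<\infty$. Since $\Sigma$ is the completion, $A$ differs from a Borel set by a null set, so we may take $A$ Borel. By outer regularity, for $\eps>0$ there is an open $W\supseteq A$ with $\lambda(W\setminus A)<\eps$. Write $W=\bigcup_{j}U_{k_{j}}$. By continuity from below, some finite union $F=U_{k_{1}}\cup\dots\cup U_{k_{n}}$ satisfies $\lambda(W\setminus F)<\eps$. Therefore
$$
\lVert \chi_{A}-\chi_{F}\rVert_{1}\le \lambda(W\setminus A)+\lambda(W\setminus F)<2\eps .
$$

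\emph{Step 3: conclusion.} Simple functions $\sum_{i} c_{i}\chi_{A_{i}}$ with $\lambda(A_{i})<\infty$ are dense in $L^{1}(G)$. We replace each coefficient $c_{i}$ by a rational number and each $A_{i}$ by a finite union of members of $\mathcal{U}$, as in Step 2. The family
$$
\Big\{\textstyle\sum_{i=1}^{m} q_{i}\chi_{F_{i}}\ :\ q_{i}\in\mathbb{Q},\ F_{i}\text{ a finite union of members of }\mathcal{U}\Big\}
$$
is countable and dense in $L^{1}(G)$, so $L^{1}(G)$ is separable.

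The only nontrivial ingredient, and the main obstacle, is the outer regularity of $\lambda$ on all Borel sets of finite measure. For a general Radon measure this regularity can fail for non-$\sigma$-finite sets. It is exactly the second countability (every open set being $\sigma$-compact) that removes this issue, which is why we invoke the regularity theorem for such spaces rather than the general Radon-measure definition.
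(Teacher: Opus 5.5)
Your proof is correct. The paper gives no argument to compare against: the statement is only recorded as a Fact with a citation to Folland. The usual textbook route is different from yours. It combines two ingredients: density of $C_c(G)$ in $L^1(G)$ for Radon measures, and separability of $C_0(G)$ for second countable $G$, since the one-point compactification is compact metrizable. It then uses that, for functions supported in a fixed compact $K$ of an exhaustion of $G$, sup-norm closeness gives $L^1$-closeness up to the factor $\lambda(K)$.

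Your route stays inside measure theory. It uses indicators of finite unions of relatively compact basic open sets, outer regularity, and continuity from below, with no Urysohn or Stone--Weierstrass input. Because you obtain regularity from Folland's Thm.~7.8, it also works for any Borel measure finite on compact sets, regardless of which regularity convention defines Haar measure. The textbook route instead produces a countable dense set of continuous compactly supported functions, which is often the more convenient dense set in $L^1(G)$, e.g.\ for convolution arguments. Neither argument uses the group structure.

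Two small precision points, neither affecting validity:

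First, to see that open sets are $\sigma$-compact, write an open $W$ as the union of those $\overline{U_k}$ with $\overline{U_k}\subseteq W$. These cover $W$ because locally compact Hausdorff spaces are regular. An arbitrary $U_k\subseteq W$ need not have its closure inside $W$.

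Second, your closing remark misplaces the failure of regularity. In Folland's convention a Radon measure is outer regular on every Borel set by definition, and what can fail on non-$\sigma$-finite sets is inner regularity. Under the inner-regular convention, outer regularity can fail even on null sets. Either way, in the second countable case Thm.~7.8 gives full regularity, which is exactly what you use.
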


	\begin{lemma} \label{lemma:characteristic_orbit}
		If $G$ is discrete, then for every $X \subseteq G$ we have:
		$$
		  \mathfrak{G} \circledast 1_{X} = 1_{\mathfrak{G} \cdot X},
		$$
		where
		$$
		  \forall \omega \in \mathfrak{G}: \omega \cdot X := \{g \in G \mid \omega \left( g^{-1} \cdot X\right) = 1 \}.
		$$ 
	\end{lemma}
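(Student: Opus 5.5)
The plan is a direct pointwise computation: evaluate both sides at each $g \in G$ by pairing with the point masses $\delta_{g}$. Since $G$ is discrete, the Haar measure is counting measure (up to normalization), $\A = \ell^{1}(G)$, and $\A^{*} = \ell^{\infty}(G)$. An element of $\ell^{\infty}(G)$ is determined by its values $\langle \psi, \delta_{g}\rangle = \psi(g)$. It therefore suffices to show, for each fixed $\omega \in \mathfrak{G}$ and every $g \in G$, that
$$
\langle \omega \circledast 1_{X}, \delta_{g}\rangle = 1_{\omega \cdot X}(g).
$$

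\textbf{Step 1: the right action of $\delta_{g}$ on $\ell^{\infty}(G)$.} For discrete $G$ we have $\delta_{g} \star \delta_{h} = \delta_{gh}$. By the definition of the module action, for any $\psi \in \ell^{\infty}(G)$,
$$
\langle \psi \circledast \delta_{g}, \delta_{h}\rangle = \langle \psi, \delta_{g}\star\delta_{h}\rangle = \psi(gh).
$$
Applied to $\psi = 1_{X}$, this gives $1_{X} \circledast \delta_{g} = 1_{g^{-1}X}$, since $gh \in X$ if and only if $h \in g^{-1}X$.

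\textbf{Step 2: evaluate the bidual action.} Using the definition of $\mu \circledast \varphi$ and Step 1,
$$
\langle \omega \circledast 1_{X}, \delta_{g}\rangle = \langle \omega, 1_{X} \circledast \delta_{g}\rangle = \langle \omega, 1_{g^{-1}X}\rangle .
$$

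\textbf{Step 3: interpret the pairing.} Under the Yosida--Hewitt identification of $\A^{**}$ with $\ba_{\lambda}(G)$, the pairing of a finitely additive measure with a characteristic function is the integral of that function, namely $\langle \omega, 1_{A}\rangle = \omega(A)$. This is the defining property of integration against elements of $\ba$ on simple functions, so it holds for every $A \in \Sigma$, which here is all subsets of $G$. Hence the value in Step 2 equals $\omega(g^{-1}X)$. Because $\omega \in \mathfrak{G}$, this value lies in $\{0,1\}$, and it equals $1$ exactly when $g \in \omega \cdot X$ by the definition of $\omega \cdot X$. So $\omega \circledast 1_{X} = 1_{\omega\cdot X}$ for every $\omega \in \mathfrak{G}$, and taking the union over $\omega \in \mathfrak{G}$ gives the set equality $\mathfrak{G} \circledast 1_{X} = 1_{\mathfrak{G}\cdot X}$, read as $\{1_{\omega \cdot X} : \omega \in \mathfrak{G}\}$.

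\textbf{Main obstacle.} The only delicate point is bookkeeping of conventions: checking that the right action really produces the translate $g^{-1}X$ (rather than $Xg^{-1}$ or $gX$), so that it matches the notation $g^{-1}\cdot X$ in the statement. Also needed is the justification of $\langle \omega, 1_{A}\rangle = \omega(A)$ from the cited form of the Yosida--Hewitt theorem. Everything else is routine unwinding of definitions.
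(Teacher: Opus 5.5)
Your proposal is correct and follows the paper's own argument: compute $1_{X} \circledast e_{g} = 1_{g^{-1}X}$ from $e_{g} \star e_{h} = e_{gh}$, then unwind $\langle \omega \circledast 1_{X}, e_{g}\rangle = \langle \omega, 1_{g^{-1}X}\rangle$. Your Step 3 identifies this pairing with $\omega(g^{-1}X) \in \{0,1\}$ and concludes $\omega \circledast 1_{X} = 1_{\omega \cdot X}$; this is the final step the paper leaves implicit.
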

	\begin{proof}
		For every $g \in G$, write $e_{g} \in L^{1}(G)=\ell^{1}(G)$ for the standard basis element with support at $g$.
		First, note that for every $g, h \in G$ we have:
		\begin{align*}
			\langle 1_{X} \circledast e_{g}, e_{h}\rangle = 
			\langle 1_{X}, e_{g} \circledast e_{h}\rangle = 
			\langle 1_{X}, e_{gh}\rangle = \begin{cases}
				1 & gh \in X\\
				0 & \text{otherwise}
			\end{cases}
		\end{align*}
		In other words, $1_{X} \circledast e_{g} = 1_{g^{-1} \cdot X}$.
		Now, for every $\omega \in \mathfrak{G}$:
		$$
		  \langle \omega \circledast 1_{X}, e_{g}\rangle =
		  \langle \omega, 1_{X} \circledast e_{g}\rangle =
		  \langle \omega, 1_{g^{-1} \cdot X}\rangle = \omega (1_{g^{-1} \cdot X}).
		$$
	\end{proof}
	
	\begin{thm} \label{thm:asplund_subsets_on_locally_compact_groups}
		Let $G$ be a locally compact group such that $L^{1}(G)$ is separable (e.g., second-countable groups).
		Then a functional $\varphi \in L^{\infty}(G)$ is right Asplund if and only if $\mathfrak{G} \circledast \varphi$ is separable.
		
	\end{thm}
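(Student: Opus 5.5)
By Theorem~\ref{thm:separable_asplund_iff_separable_arens_orbit}, applied with $\A = L^{1}(G)$ separable, $\varphi$ is right Asplund if and only if $B_{\A^{**}} \circledast \varphi$ is norm separable. It therefore suffices to show that $B_{\A^{**}} \circledast \varphi$ is separable if and only if $\mathfrak{G} \circledast \varphi$ is separable. One direction is immediate: $\mathfrak{G} \subseteq B_{\A^{**}}$, and subsets of separable metric spaces are separable.

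For the converse, assume $\mathfrak{G} \circledast \varphi$ is norm separable. The plan is to pass from $\mathfrak{G}$ to $B_{\A^{**}}$ using Corollary~\ref{cor:compactification_as_extreme_points}, $B_{\A^{**}} = \overline{\acx}^{w^{*}} \mathfrak{G}$, together with the linearity and (weak-star, weak-star) continuity of $\Or_{\varphi}^{(l)}$ (Lemma~\ref{lemma:bidual_action_is_orbit_weak_star_continuous}). This gives
$$
B_{\A^{**}} \circledast \varphi = \Or_{\varphi}^{(l)}\left(\overline{\acx}^{w^{*}} \mathfrak{G}\right) \subseteq \overline{\acx\left(\mathfrak{G} \circledast \varphi\right)}^{w^{*}}.
$$
Here $\mathfrak{G} \circledast \varphi$ is weak-star compact as the continuous image of a compact set (Fact~\ref{fact:properties_of_G_compactification}). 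The absolutely convex hull of a norm separable set is norm separable, since rational absolutely convex combinations of a countable dense subset are norm dense. The difficulty is that this only controls the norm closure, whereas the inclusion above involves a weak-star closure.

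The main obstacle is therefore to show that for the weak-star compact, norm separable set $K := \mathfrak{G} \circledast \varphi$ we have $\overline{\acx}^{w^{*}}(K) = \overline{\acx}(K)$. I would use Lemma~\ref{lemma:separable_implies_cotame} together with Fact~\ref{fact:co_tame_milman}, working with the convex weak-star compact set $M := \overline{\co}^{w^{*}}(K \cup -K)$; note that $\overline{\acx}^{w^{*}}(K) = M$.

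The subtle point is that Lemma~\ref{lemma:separable_implies_cotame} requires $M$ itself to be separable, which is what we are trying to prove. I would avoid this by arguing directly with independent sequences. An independent sequence $\{x_{n}\}_{n\in\N} \subseteq B_{\A}$ over $M$ yields one over $\mathrm{ext}(M) \subseteq K \cup -K$ (by Milman's theorem), since a weak-star continuous affine function attains its extreme values at extreme points. Then the argument of Lemma~\ref{lemma:separable_implies_cotame} produces $2^{\aleph_{0}}$ points of $K \cup -K$ at pairwise distance at least $\eps$, contradicting separability. If the passage to extreme points proves delicate, I would instead appeal to the fact that a norm separable weak-star compact set in a dual space has the property that its weak-star closed convex hull equals its norm closed convex hull. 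This is the classical Haydon/Namioka-type result, which is exactly the localized form encoded in Fact~\ref{fact:co_tame_milman}.

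Granting this, $M = \overline{\co}(K \cup -K)$ is norm separable, hence so is its subset $B_{\A^{**}} \circledast \varphi$. Theorem~\ref{thm:separable_asplund_iff_separable_arens_orbit} (or directly Lemma~\ref{lemma:separable_implies_asplund}) then shows that $\varphi$ is right Asplund.
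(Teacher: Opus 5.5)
Your reduction through Theorem~\ref{thm:separable_asplund_iff_separable_arens_orbit} and the inclusion $B_{\A^{**}}\circledast\varphi\subseteq\overline{\acx}^{w^{*}}(K)$, with $K:=\mathfrak{G}\circledast\varphi$, are exactly the paper's. You are also right that Lemma~\ref{lemma:separable_implies_cotame} only makes $K$ co-tame, while Fact~\ref{fact:co_tame_milman} needs a \emph{convex} co-tame set. The paper bridges this implicitly: co-tameness passes to weak-star closed convex hulls, \cite[Lemma~3.30]{TameFunc}, which it cites in the proof of Theorem~\ref{thm:right_tame_left_co_tame_locally_compact}.

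The gap is in your own bridge. Milman/Bauer only says that \emph{one} weak-star continuous affine function attains its extremes on $\ext(M)$. Independence instead needs a single $\psi$ with $\langle\psi,x_n\rangle\leq a$ for all $n\in N$ and $\langle\psi,x_m\rangle\geq b$ for all $m\in M'$ simultaneously. The set of such $\psi$ is a weak-star compact convex subset of $M$, and its extreme points need not be extreme in $M$. For example, in the triangle with vertices $(0,0),(1,0),(0,1)$, the point $(0.45,0.45)$ satisfies $x\geq 0.4$ and $y\geq 0.4$, but no vertex does. So you cannot conclude that the sequence is independent over $K\cup -K$, and the counting argument of Lemma~\ref{lemma:separable_implies_cotame} has nothing to act on.

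The transfer is true only after passing to a subsequence, and it needs a real argument. By Rosenthal's dichotomy, every sequence in $B_{\A}$ has a subsequence that is either independent over $K\cup -K$ or pointwise convergent on $K\cup -K$. The first case is excluded by the argument of Lemma~\ref{lemma:separable_implies_cotame}, since $K\cup -K$ is weak-star compact and norm separable. In the second case, every point of $M$ is the barycenter of a Radon probability measure on $K\cup -K$, so dominated convergence gives pointwise convergence on all of $M$. A sequence independent over the compact set $M$ has no such subsequence, so $M$ is co-tame, and Fact~\ref{fact:co_tame_milman} with $N=K\cup -K$ gives $M=\overline{\co}(K\cup -K)$.

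Alternatively, your fallback is correct and needs no co-tameness at all. For a norm separable weak-star compact $K$, partition $K$ into countably many weak-star Borel pieces of norm diameter $<\eps$; this works because closed norm balls are weak-star closed. Then approximate the barycenter of a representing measure in norm by a finite convex combination of points of $K$. Note, though, that this is not literally Fact~\ref{fact:co_tame_milman}, which again presupposes co-tameness of the hull. With either replacement for the extreme-point step, your proof is complete and matches the paper's.
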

	\begin{proof}
		Because $L^{1}(G)$ is separable, Theorem \ref{thm:separable_asplund_iff_separable_arens_orbit} implies that $\varphi$ is right Asplund if and only if $B_{\A^{**}} \circledast \varphi$ is separable.
		In particular, $\varphi$ is right Asplund implies that $\mathfrak{G} \circledast \varphi$ is separable
		
		Conversely, suppose that $\mathfrak{G} \circledast \varphi$ is indeed separable. 
		We will show that $B_{\A^{**}}\circledast \varphi$ is also separable.
		Recall that $\mathfrak{G}$ is weak-star compact (Fact \ref{fact:properties_of_G_compactification}).
		Applying Lemma \ref{lemma:bidual_action_is_orbit_weak_star_continuous} we conclude that $\mathfrak{G} \circledast \varphi$ is also weak-star compact.
		By Lemma \ref{lemma:separable_implies_cotame}, $\mathfrak{G} \circledast \varphi$ is separable and thus co-tame.
		Now, using Fact \ref{fact:co_tame_milman} we conclude that:
		$$
		\overline{\co}^{w^{*}} (\mathfrak{G} \circledast \varphi) = \overline{\co} (\mathfrak{G} \circledast \varphi).
		$$
		Similarly:
		$$
		\overline{\acx}^{w^{*}} (\mathfrak{G} \circledast \varphi) = \overline{\acx} (\mathfrak{G} \circledast \varphi).
		$$
		Moreover, we have:
		$$
		B_{\A^{**}} \circledast \varphi =
		\Or_{\varphi}^{(l)}(B_{\A^{**}}) \underset{\ref{cor:compactification_as_extreme_points}}{=}
		\Or_{\varphi}^{(l)}(\overline{\acx}^{w^{*}} \mathfrak{G}) \underset{\ref{lemma:bidual_action_is_orbit_weak_star_continuous}}{\subseteq}
		\overline{\acx}^{w^{*}} \Or_{\varphi}^{(l)}(\mathfrak{G}) =
		\overline{\acx}^{w^{*}} (\mathfrak{G}\circledast \varphi) = 
		\overline{\acx}(\mathfrak{G} \circledast \varphi).
		$$
		In other words, $B_{\A^{**}} \circledast \varphi$ is contained in the absolutely closed convex hull of a separable space, and thus separable.
	\end{proof}

	WAP sets were defined in \cite{WAPRup} as subsets of semitopological semigroups whose characteristic function is weakly almost periodic in a dynamical sense.
	In \cite{GM-MTame}, the authors study and characterize \emph{Asplund and tame} subsets of $\Z$, which are defined analogously.
	Using the previous theorem, we can prove the following generalization of their results.
	\begin{prop} \label{prop:asplund_subsets_in_discrete_groups}
		Let $G$ be a discrete, countable group.
		Then a subset $X \subseteq G$ is right Asplund (i.e., $1_{X} \in L^{\infty}(G)$ is right Asplund) if and only if $\mathfrak{G} \circledast 1_{X}$ is countable.
	\end{prop}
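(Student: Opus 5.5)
The plan is to reduce the statement to Theorem~\ref{thm:asplund_subsets_on_locally_compact_groups}. Since $G$ is countable and discrete, $L^{1}(G)=\ell^{1}(G)$ is separable, so that theorem applies. It shows that $1_{X}$ is right Asplund if and only if $\mathfrak{G}\circledast 1_{X}$ is norm separable in $\ell^{\infty}(G)$. What remains is to show that, for this particular set, norm separability is equivalent to countability.

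One direction is immediate: a countable set is separable. For the other direction I will use Lemma~\ref{lemma:characteristic_orbit}, which gives
$$
\mathfrak{G}\circledast 1_{X}=\{1_{\omega\cdot X}\mid \omega\in\mathfrak{G}\}.
$$
Every element of the orbit is therefore the characteristic function of a subset of $G$. For two distinct subsets $Y\neq Z\subseteq G$ we have $\lVert 1_{Y}-1_{Z}\rVert_{\infty}=1$. Hence $\mathfrak{G}\circledast 1_{X}$ is a $1$-separated (uniformly discrete) subset of $\ell^{\infty}(G)$.

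A $1$-separated set is separable only if it is countable. Indeed, if $D$ is a countable dense subset, each point of the set has a point of $D$ within distance $1/2$, and distinct points of the set cannot share the same nearby point of $D$. This gives an injection of the set into $D$.

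Putting these together: $1_{X}$ is right Asplund $\iff$ $\mathfrak{G}\circledast 1_{X}$ is separable $\iff$ $\mathfrak{G}\circledast 1_{X}$ is countable.

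The only point needing care is the use of Lemma~\ref{lemma:characteristic_orbit}. It requires $\omega(g^{-1}X)\in\{0,1\}$, which holds because $\omega\in\mathfrak{G}$ is $\{0,1\}$-valued. It also requires that the pairing $\langle\omega,1_{A}\rangle$ equal $\omega(A)$, which is the Yosida–Hewitt identification. Everything else is routine, and I expect no real obstacle.
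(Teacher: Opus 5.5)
Your proposal is correct and follows the paper's proof: both apply Theorem~\ref{thm:asplund_subsets_on_locally_compact_groups} (valid since $\ell^{1}(G)$ is separable), then use Lemma~\ref{lemma:characteristic_orbit} to see that $\mathfrak{G}\circledast 1_{X}$ consists of characteristic functions, so it is $1$-separated and therefore separable exactly when it is countable. You spell out that last step more carefully than the paper does; in the injection argument, choose the points of $D$ at distance strictly less than $1/2$ so that distinct orbit points cannot share one.
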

	\begin{proof}
		Clearly $G$ is second countable so Theorem \ref{thm:asplund_subsets_on_locally_compact_groups} is applicable.
		Therefore $1_{X}$ is right Asplund if and only if $\mathfrak{G} \circledast 1_{X}$ is separable.
		By Lemma \ref{lemma:characteristic_orbit}, $\mathfrak{G} \circledast 1_{X}$ consists of characteristic functions. 
		As a consequence, it is discrete, hence separable if and only if it is countable.
	\end{proof}
	\begin{remark}
		In \cite{WAPRup} and \cite{GM-MTame}, they consider a \emph{function} to be WAP/Asplund/tame if its orbit under translations is weakly relatively compact/Asplund/tame.
		We consider $1_{X}$ as a \emph{functional} in $L^{\infty}(G) = \ell^{\infty}(G)$.
		However, both of these notions are equivalent as was shown in \cite[Thm.~3.2]{TameFunc} for discrete groups and later in \cite[Thm.~6.3]{meTameFunc} for right uniformly continuous functionals on locally compact groups.
	\end{remark}
    
	The last proposition leads to a new proof for the characterization of Asplund subsets of $\Z$.
	\begin{f}\cite[Thm.~4.11]{GM-MTame} \label{fact:asplund_subsets_equivalence_new}
		Let $X \subseteq \Z$ be some subset.
		The following are equivalent:
		\ben
		\item $X$ is an Asplund subset.
		\item there exists a countable $Y\subseteq \beta \Z$ such that for every $\omega \in \beta\Z$ we can find $\theta \in Y$ such that:
		$$
		\forall n \in \Z: n + X \in \omega \iff n + X \in \theta.
		$$
		\een
	\end{f}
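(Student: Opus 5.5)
The plan is to derive Fact~\ref{fact:asplund_subsets_equivalence_new} from Proposition~\ref{prop:asplund_subsets_in_discrete_groups} applied to the countable discrete group $G=\Z$. The Remark following that proposition identifies the dynamical notion of an Asplund subset of $\Z$ with $1_X$ being a right Asplund functional on $\ell^{1}(\Z)$. Since $\Z$ is abelian, left and right play no essential role here. By Fact~\ref{fact:stone_cech_compactification}, $\mathfrak{G}=\beta\Z$, and we view an ultrafilter $\omega\in\beta\Z$ as a $\{0,1\}$-valued measure, so $A\in\omega$ iff $\omega(A)=1$. By Proposition~\ref{prop:asplund_subsets_in_discrete_groups}, condition (1) is then equivalent to the countability of $\beta\Z\circledast 1_X$. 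It therefore remains to show that this countability is equivalent to (2).

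The key step is to describe $\omega\circledast 1_X$ explicitly using Lemma~\ref{lemma:characteristic_orbit}. In additive notation we have $\omega\circledast 1_X=1_{\omega\cdot X}$, where
$$\omega\cdot X=\{g\in\Z \mid \omega(-g+X)=1\}=\{g\in \Z\mid -g+X\in\omega\}.$$
Substituting $n=-g$, this shows that $\omega\circledast 1_X=\theta\circledast 1_X$ if and only if, for all $n\in\Z$, we have $n+X\in\omega\iff n+X\in\theta$. In other words, the condition displayed in (2) says precisely that $\omega$ and $\theta$ have the same image under the orbit map $\Or^{(l)}_{1_X}$ restricted to $\beta\Z$.

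With this in hand, both implications are short.

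For (1)$\Rightarrow$(2): the set $\beta\Z\circledast 1_X$ is countable, so for each of its elements we choose one preimage in $\beta\Z$. The resulting set $Y$ is countable. For any $\omega\in\beta\Z$, the element of $Y$ chosen for $\omega\circledast 1_X$ is a $\theta$ with $\theta\circledast1_X=\omega\circledast 1_X$, and by the key step this $\theta$ satisfies (2).

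For (2)$\Rightarrow$(1): condition (2) gives $\beta\Z\circledast 1_X=Y\circledast 1_X$, which is countable. Proposition~\ref{prop:asplund_subsets_in_discrete_groups} then yields that $1_X$ is right Asplund.

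The main point requiring care is bookkeeping rather than depth. One must get the sign and translation conventions right when converting Lemma~\ref{lemma:characteristic_orbit}, which is stated with $g^{-1}\cdot X$ in multiplicative notation, into the form $n+X$. One must also check that the membership $A\in\omega$ matches $\omega(A)=1$ under the identification $\mathfrak{G}=\beta\Z$. In addition, the Remark is needed to justify that the paper's functional notion of Asplundness agrees with the dynamical notion used in \cite{GM-MTame}; this holds for discrete groups by \cite[Thm.~3.2]{TameFunc}.
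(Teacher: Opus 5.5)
Your proposal is correct and follows essentially the same route as the paper. You reduce, via Proposition~\ref{prop:asplund_subsets_in_discrete_groups} and $\mathfrak{G}=\beta\Z$, to countability of $\beta\Z\circledast 1_X$; you then show $\omega\circledast 1_X=\theta\circledast 1_X$ iff $n+X\in\omega\iff n+X\in\theta$ for all $n$ by evaluating on basis vectors, which is the paper's computation $1_X\circledast e_n=1_{X-n}$; and your two explicit implications (choosing preimages, and $\beta\Z\circledast 1_X=Y\circledast 1_X$) simply spell out the paper's closing sentence.
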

	\begin{proof}
		First, by Fact \ref{fact:stone_cech_compactification} we have $\beta\Z = \mathfrak{G}$.
		In virtue of Proposition \ref{prop:asplund_subsets_in_discrete_groups}, $X$ is an Asplund subset if and only if $\beta \Z \circledast 1_{X}$ is countable.
		
		We claim that $\omega \circledast 1_{X} = \theta \circledast 1_{X}$ for $\omega, \theta \in \beta \Z$ if and only if:
		$$
		\forall n \in \Z: n + X \in \omega \iff n + X \in \theta.
		$$
		First, a straight-forward calculation yields that for every element $e_{n}$ of the ``standard basis" of $L^{\infty}(\Z)$, we have:
		$$
		1_{X} \circledast 1_{n} = 1_{X - n}.
		$$
		Indeed:
		\begin{align*}
			\omega \circledast 1_{X} = \theta \circledast 1_{X} & \iff
			\forall n \in \Z: \langle \omega \circledast 1_{X}, e_{n}\rangle = \langle \theta \circledast 1_{X}, e_{n}\rangle\\
			& \iff \forall n\in \Z: \langle \omega, 1_{X} \circledast e_{n}\rangle = \langle \theta, 1_{X} \circledast e_{n}\rangle\\
			& \iff \forall n\in \Z: \langle \omega, 1_{X-n}\rangle = \langle \theta, 1_{X-n}\rangle\\
			& \iff \forall n\in \Z: X - n \in \omega \iff X - n \in \theta\\
			& \iff \forall n\in \Z: X + n \in \omega \iff X + n \in \theta.
		\end{align*}
		
		Now, it is easy to see that the latter condition of our statement is satisfied if and only if there exists a countable $Y \subseteq \beta \Z$ such that $\beta \Z \circledast 1_{X} = Y \circledast 1_{X}$.
		This is clearly equivalent to $\beta\Z \circledast 1_{X}$ being countable.
	\end{proof}
	
	\begin{thm} \label{thm:right_tame_left_co_tame_locally_compact}
		Let $G$ be a locally compact group. 
		Then a functional $\varphi \in L^{\infty}(G)$ is right tame if and only if $\mathfrak{G} \circledast \varphi$ is co-tame.
	\end{thm}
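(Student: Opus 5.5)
By Theorem \ref{thm:left_tame_right_cotame}, $\varphi$ is right tame if and only if $B_{\A^{**}} \circledast \varphi$ is co-tame. So the statement reduces to showing that $B_{\A^{**}} \circledast \varphi$ is co-tame if and only if $\mathfrak{G} \circledast \varphi$ is co-tame.

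\emph{Forward direction.} This is immediate from the definition of co-tameness. Since $\mathfrak{G} \subseteq B_{\A^{**}}$, we have $\mathfrak{G} \circledast \varphi \subseteq B_{\A^{**}} \circledast \varphi$. The set $\mathfrak{G} \circledast \varphi$ is weak-star compact by Fact \ref{fact:properties_of_G_compactification} and Lemma \ref{lemma:bidual_action_is_orbit_weak_star_continuous}. An independent sequence $\{a_n\} \subseteq B_{\A}$ over $\mathfrak{G}\circledast\varphi$ would also be independent over the larger set $B_{\A^{**}} \circledast \varphi$, which is impossible.

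\emph{Converse.} I will follow the pattern of the proof of Theorem \ref{thm:asplund_subsets_on_locally_compact_groups}, replacing separability with co-tameness.

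First, Corollary \ref{cor:compactification_as_extreme_points} and the weak-star continuity and linearity of $\Or_{\varphi}^{(l)}$ give
$$
B_{\A^{**}} \circledast \varphi \subseteq \overline{\acx}^{w^{*}}(\mathfrak{G} \circledast \varphi).
$$
It therefore suffices to show that the weak-star closed absolutely convex hull of a weak-star compact co-tame set $M$ is again co-tame, since co-tameness clearly passes to weak-star closed subsets.

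To prove this, argue by contradiction. Suppose $\{a_n\} \subseteq B_{\A}$ is independent over $\overline{\acx}^{w^{*}} M$. By Definition \ref{defin:asplund_tame_subsets}, co-tameness of $M$ means that $\{a_n\}$, viewed as a family of functions on $M$, has a subsequence that is pointwise convergent on $M$. This is the weak-Cauchy form of tameness, applied to $B_{\A}$ seen inside $C(M)$. Call this subsequence $\{a_{n_k}\}$.

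Each $\mu \in \overline{\acx}^{w^{*}} M$ is the barycenter of a Radon measure on $M \cup (-M)$ of total mass at most $1$. Since the functions $a_{n_k}$ are uniformly bounded and continuous on $M$, dominated convergence shows that $\{a_{n_k}\}$ converges pointwise on all of $\overline{\acx}^{w^{*}} M$.

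A pointwise convergent sequence admits no independent subsequence. This contradicts the independence of $\{a_n\}$, which passes to subsequences.

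The main obstacle is the step just described: passing pointwise convergence from $M$ to its closed absolutely convex hull via barycenters. The barycentric representation is standard (Krein–Milman/Choquet for weak-star compact sets). Alternatively, one can invoke Fact \ref{fact:co_tame_milman} applied to the convex set $\overline{\co}^{w^{*}}(M\cup -M)$, but that formulation characterizes co-tameness of a convex set rather than producing it. So I would prefer the direct barycenter argument combined with the Rosenthal $\ell^1$ dichotomy: by Definition \ref{defin:asplund_tame_subsets}, co-tameness of $M$ is equivalent to every bounded sequence in $\A$ having a subsequence that converges pointwise on $M$.
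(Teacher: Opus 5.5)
Your proof is correct and takes the same route as the paper. You reduce to co-tameness of $B_{\A^{**}}\circledast\varphi$ via Theorem~\ref{thm:left_tame_right_cotame}, use Corollary~\ref{cor:compactification_as_extreme_points} and weak-star continuity of $\Or_{\varphi}^{(l)}$ to get $B_{\A^{**}}\circledast\varphi\subseteq\overline{\acx}^{w^{*}}(\mathfrak{G}\circledast\varphi)$, and conclude from the stability of co-tameness under subsets and weak-star closed (absolutely) convex hulls. The only difference is that the paper cites this stability from \cite[Lemma~3.30]{TameFunc}, whereas you prove it directly via Rosenthal's dichotomy, barycentric representation and dominated convergence. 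That argument is sound, with one implicit point: the step ``pointwise convergent $\Rightarrow$ no independent subsequence'' relies on the weak-star compactness of the hull and the weak-star continuity of the $a_n$, which let you realize an infinite alternating pattern at a single point.
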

	\begin{proof}
		By Theorem \ref{thm:left_tame_right_cotame}, $\varphi$ is right tame if and only if $B_{\A^{**}} \circledast \varphi$ is co-tame.
		By Corollary \ref{cor:compactification_as_extreme_points} we have:
		$$
		B_{\A^{**}} = \overline{\acx}^{w^{*}} \mathfrak{G}.
		$$
		Therefore:
		$$
		B_{\A^{**}} \circledast \varphi = \Or_{\varphi}^{(l)}(B_{\A^{**}})
		= \Or_{\varphi}^{(l)}\left(\overline{\acx}^{w^{*}} \mathfrak{G}\right) 
		\underset{\ref{lemma:bidual_action_is_orbit_weak_star_continuous}}{\subseteq} \overline{\acx}^{w^{*}} \Or_{\varphi}^{(l)} (\mathfrak{G}) =
		\overline{\acx}^{w^{*}} \left( \mathfrak{G} \circledast \varphi\right).
		$$
		Co-tame subsets are closed under convex hulls and weak-star closures \cite[Lemma~3.30]{TameFunc}, so $\varphi$ is right tame if and only if $\mathfrak{G} \circledast \varphi$ is co-tame.
	\end{proof}
	
	\begin{q}
		In \cite{GM-MTame}, the authors also prove a similar version of Fact \ref{fact:asplund_subsets_equivalence_new} for tame characteristic functions over $\Z$.
		Is there a useful tame analogue of Proposition \ref{prop:asplund_subsets_in_discrete_groups}?
	\end{q}


	\bibliography{mybib}{}
	\bibliographystyle{plain}
\end{document}